\numberwithin{equation}{section}
\def\bb#1\eb{\textcolor{blue}
{#1}} %
\def\br#1\er{\textcolor{red}
{#1}} %
\def\bv#1\ev{\textcolor{green}
{#1}} %
\def\bc#1\ec{\textcolor{cyan}
{#1}} %
\def\Xint#1{\mathchoice
  {\XXint\displaystyle\textstyle{#1}}%
  {\XXint\textstyle\scriptstyle{#1}}%
  {\XXint\scriptstyle\scriptscriptstyle{#1}}%
  {\XXint\scriptscriptstyle\scriptscriptstyle{#1}}%
  \!\int}
\def\XXint#1#2#3{{\setbox0=\hbox{$#1{#2#3}{\int}$}
  \vcenter{\hbox{$#2#3$}}\kern-.5\wd0}}
\def\-int{\Xint -}
\newcommand{\R}{\mathbb{R}}
\DeclareMathOperator{\X}{\mathbb{H}}
\DeclareMathOperator{\e}{\varepsilon}
\newcommand{\N}{\mathcal{N}}
\DeclareMathOperator{\J}{\mathcal{J}}
\newtheorem{prop}{Proposition}[section]
\newtheorem{lem}{Lemma}[section]
\newtheorem{thm}{Theorem}[section]
\begin{document}
\title[Multiplicity and concentration for fractional Schr\"odinger systems]{Multiplicity and concentration of solutions for fractional  Schr\"odinger systems via penalization method} 

\author[V. Ambrosio]{Vincenzo Ambrosio}
\address{Vincenzo Ambrosio\hfill\break\indent 
Dipartimento di Ingegneria Industriale e Scienze Matematiche \hfill\break\indent
Universit\`a Politecnica delle Marche\hfill\break\indent
Via Brecce Bianche, 12\hfill\break\indent
60131 Ancona (Italy)}
\email{v.ambrosio@univpm.it}

\keywords{Fractional Schr\"odinger systems; penalization method; Ljusternik-Schnirelmann theory}
\subjclass[2010]{35J50, 35A15, 35R11, 58E05}

\begin{abstract}
The aim of this paper is to investigate the existence, multiplicity and concentration of positive solutions for the following nonlocal system of fractional Schr\"odinger equations
\begin{equation*}
\left\{
\begin{array}{ll}
\e^{2s} (-\Delta)^{s}u+V(x)u=Q_{u}(u, v) &\mbox{ in } \R^{N},\\
\e^{2s} (-\Delta)^{s}v+W(x)v=Q_{v}(u, v) &\mbox{ in } \R^{N}, \\
u, v>0 &\mbox{ in } \R^{N},
\end{array}
\right.
\end{equation*}
where $\e>0$ is a parameter, $s\in (0, 1)$, $N>2s$, $(-\Delta)^{s}$ is the fractional Laplacian, $V:\R^{N}\rightarrow \R$ and $W:\R^{N}\rightarrow \R$ are positive continuous potentials, $Q$ is a homogeneous $C^{2}$-function with subcritical growth.
In order to relate the number of solutions with the topology of the set where the potentials $V$ and $W$ attain their minimum values, we apply penalization techniques, Nehari manifold arguments and Ljusternik-Schnirelmann theory.
\end{abstract}

\maketitle
\section{Introduction}

\noindent
In this paper we deal with the existence, multiplicity and concentration phenomena of positive solutions for the following nonlinear fractional Schr\"odinger system
\begin{equation}\label{P}
\left\{
\begin{array}{ll}
\e^{2s} (-\Delta)^{s}u+V(x)u=Q_{u}(u, v) &\mbox{ in } \R^{N},\\
\e^{2s} (-\Delta)^{s}v+W(x)v=Q_{v}(u, v)  &\mbox{ in } \R^{N}, \\
u, v>0 &\mbox{ in } \R^{N},
\end{array}
\right.
\end{equation}
where $\e>0$ is a parameter, $s\in (0, 1)$, $N>2s$, $V:\R^{N}\rightarrow \R$ and $W:\R^{N}\rightarrow \R$ are H\"older continuous potentials, $Q$ is a homogeneous $C^{2}$-function with subcritical growth.\\
We assume that there exist a bounded open set $\Lambda\subset \R^{N}$, $x_{0}\in \R^{N}$ and $\rho_{0}>0$ such that:
\begin{compactenum}[$(H1)$]
\item $V(x), W(x)\geq \rho_{0}$ for any $x\in \partial \Lambda$;
\item $V(x_{0}), W(x_{0})< \rho_{0}$;
\item $V(x)\geq V(x_{0})>0$, $W(x)\geq W(x_{0})>0$ for any $x\in \R^{N}$.
\end{compactenum}
Concerning the function $Q: \R^{2}_{+}\rightarrow \R$, where $\R^{2}_{+}=[0, \infty)\times [0, \infty)$, we suppose that $Q\in C^{2}(\R^{2}_{+}, \R)$ satisfies the following conditions:
\begin{compactenum}[$(Q1)$]
\item there exists $p\in (2, 2^{*}_{s})$, with $2^{*}_{s}=\frac{2N}{N-2s}$, such that $Q(t u, tv)=t^{p}Q(u, v)$ for any $t>0$, $(u, v)\in \R^{2}_{+}$;
\item there exists $C>0$ such that $|Q_{u}(u, v)|+|Q_{v}(u, v)|\leq C(u^{p-1}+v^{p-1})$ for any $(u, v)\in \R^{2}_{+}$;
\item $Q_{u}(0, 1)=0=Q_{v}(1, 0)$;
\item $Q_{u}(1, 0)=0=Q_{v}(0, 1)$;
\item $Q(u, v)>0$ for any $u, v>0$;
\item $Q_{u}(u, v), Q_{v}(u, v)\geq 0$ for any $(u, v)\in \R^{2}_{+}$.
\end{compactenum}
Since we are interested in positive solutions of \eqref{P}, we extend the function $Q$ to the whole of $\R^{2}$ by setting $Q(u, v)=0$ if $u\leq 0$ or $v\leq 0$. We note that the $p$-homogeneity of $Q$ implies that the following identity holds:
\begin{equation}\label{2.1}
pQ(u, v)=uQ_{u}(u, v)+vQ_{v}(u, v) \mbox{ for any } (u, v)\in \R^{2},
\end{equation}
and
\begin{equation}\label{2.11}
p(p-1)Q(u, v)=u^{2}Q_{uu}(u, v)+2uvQ_{uv}(u, v)+v^{2}Q_{vv}(u, v) \mbox{ for any } (u, v)\in \R^{2}.
\end{equation}
As a model for $Q$, we can provide the following example given in \cite{DMFS}. 
Let $q\geq 1$ and 
$$
\mathcal{P}_{q}(u, v)=\sum_{\alpha_{i}+\beta_{i}=q} a_{i} u^{\alpha_{i}} v^{\beta_{i}},
$$
where $i\in \{1, \dots, k\}$, $\alpha_{i}, \beta_{i}\geq 1$ and $a_{i}\in \R$. The following functions and their possible combinations, with appropriate choice of the coefficients $a_{i}$, satisfy assumptions $(Q1)$-$(Q5)$ on $Q$
$$
Q_{1}(u, v)=\mathcal{P}_{p}(u, v),  \quad Q_{2}(u, v)=\sqrt[r]{\mathcal{P}_{\ell}(u, v)} \quad \mbox{ and } \quad Q_{3}(u, v)=\frac{\mathcal{P}_{\ell_{1}}(u, v)}{\mathcal{P}_{\ell_{2}}(u, v)},
$$
with $r= \ell p$ and $\ell_{1}-\ell_{2}=p$.
\smallskip

The nonlocal operator $(-\Delta)^{s}$ appearing in \eqref{P}, it is the fractional Laplacian operator which can be defined for any $u: \R^{N}\rightarrow \R$ smooth enough by setting
$$
(-\Delta)^{s}u(x)= C_{N, s}  P.V. \int_{\R^{N}} \frac{(u(x)-u(y))}{|x-y|^{N+2s}} dy  \quad (x\in \R^{N})
$$
where $P. V.$ stands for the Cauchy principal value, and $C_{N, s}$ is a positive
constant depending only on $N$ and $s$; see for instance \cite{DPV, MBRS} for more details. \\
In the scalar case, problem \eqref{P} reduces to the following fractional Schr\"odinger equation
\begin{equation}\label{FSE}
\e^{2s} (-\Delta)^{s}u+V(x)u=f(x, u) \mbox{ in } \R^{N}.
\end{equation}
We recall that a basic motivation to consider (\ref{FSE}) arises in the study of standing wave solutions $\Phi(t, x)=u(x) e^{-\imath c t}$ for the following time-dependent fractional Schr\"odinger equation
\begin{equation}\label{TDFSE}
i  \frac{\partial \Phi}{\partial t} =  (-\Delta)^{s} \Phi +
V(x) \Phi - f(x, |\Phi|) \mbox{ for } (t, x)\in \R\times \R^{N}, 
\end{equation}
which plays a fundamental role in fractional quantum mechanics.
Equation \eqref{TDFSE} was introduced by Laskin \cite{Laskin1, Laskin2} as an extension of the classical nonlinear Schr\"odinger equation \cite{BW, DF, FW, Rab, Wang} in which the Brownian motion of the quantum paths is replaced by a  L\'evy flight.

In the last decade a great attention has been paid to the existence and multiplicity of solutions to \eqref{FSE} under several assumptions on the potential $V(x)$, and involving nonlinearities $f(x, u)$ with subcritical or critical growth.
Felmer et al. \cite{FQT} investigated existence, regularity and qualitative properties of positive solution to \eqref{FSE} when $V=1$ and $f$ is a superlinear function with subcritical growth and satisfying the Ambrosetti-Rabinowitz condition. 
D\'avila et al. \cite{DDPW} used Lyapunov-Schmidt reduction method to prove that \eqref{FSE} has a multi-peak solution when the potential $V\in C^{1, \alpha}(\R^{N})\cap L^{\infty}(\R^{N})$, $\inf_{x\in\R^{N}} V(x)>0$ and $f(x,u)=|u|^{p-1}u$.
Fall et al. \cite{FMV} showed that the concentration points of the solutions of \eqref{FSE} must be the critical points for $V$, as $\e$ tends to zero.
Dipierro et al. \cite{DMV} proved  some existence results to \eqref{FSE} with $V=0$, $f(x, u)=\e h u^{q}+u^{2^{*}_{s}-1}$, where $q\in (0, 1)$ and $h\in L^{1}(\R^{N})\cap L^{\infty}(\R^{N})$, via Concentration-Compactness Principle and mountain pass arguments.
Alves and Miyagaki in \cite{AM} (see also \cite{A3}) used the extension method \cite{CS} and the penalization technique in \cite{DF} to investigate the existence and concentration of positive solutions to (\ref{FSE}) when $f$ is a continuous function having a subcritical growth, and the potential $V$ is a continuous function having a local minimum. 
Further results related to \eqref{FSE} can be found in \cite{AlAm, Ambrosio, AI2, FS, FLS, Secchi1} in which the authors established several existence and multiplicity results by using appropriate and different variational and topological methods. \\
In this paper we focus our attention on the multiplicity and concentration of positive solutions for fractional Schr\"odinger systems.\\
We recall that in the classical literature, many interesting papers \cite{Alves, AFF, AFF2, AS, AY, BS, FF} considered the existence, multiplicity and symmetry of solutions for elliptic systems of the type
\begin{equation}\label{CSS}
\left\{
\begin{array}{ll}
-\e^{2} \Delta u+V(x)u=G_{u}(u, v)  &\mbox{ in } \R^{N},\\
-\e^{2} \Delta v+W(x)v=G_{v}(u, v)  &\mbox{ in } \R^{N}, \\
u, v>0  &\mbox{ in } \R^{N}.
\end{array}
\right.
\end{equation}
In particular way, in \cite{Alves, AFF, AFF2}, the authors investigated positive solutions to \eqref{CSS}, via a suitable variant of the penalization method introduced by del Pino and Felmer in \cite{DF} to study a class of nonlinear Schr\"odinger equations. \\ 
Differently from the local case, in the fractional context there are only few papers \cite{ASy, choi, lm, ww} dealing with fractional systems in $\R^{N}$, and, as far as we know, no results on the multiplicity and concentration of solutions for fractional nonlinear Schr\"odinger systems are available.\\
The goal of this work is to give a first result in this direction, generalizing the multiplicity and concentration results in \cite{AFF} for nonlocal system \eqref{P}.\\
Before stating our results, we need to introduce some notations.
Fix $\xi\in \R^{N}$, and we consider the following autonomous system
\begin{equation*}
\left\{
\begin{array}{ll}
 (-\Delta)^{s}u+V(\xi)u=Q_{u}(u, v)  &\mbox{ in } \R^{N},\\
 (-\Delta)^{s}v+W(\xi)v=Q_{v}(u, v) &\mbox{ in } \R^{N}, \\
u, v>0 &\mbox{ in } \R^{N}.
\end{array}
\right.
\end{equation*}
Let $\J_{\xi}: H^{s}(\R^{N})\times H^{s}(\R^{N})\rightarrow \R$ be the Euler-Lagrange functional associated with the above problem, i.e. 
$$
\J_{\xi}(u, v)=\frac{1}{2}\|(u, v)\|^{2}_{\xi}-\int_{\R^{N}} Q(u, v) dx, 
$$
where
$$
\|(u, v)\|^{2}_{\xi}:=\int_{\R^{N}} |(-\Delta)^{\frac{s}{2}} u|^{2}+ |(-\Delta)^{\frac{s}{2}} v|^{2} dx+\int_{\R^{N}} (V(\xi) u^{2}+W(\xi) v^{2}) dx. 
$$
As in \cite{ASy}, we can see that assumptions $(H3)$, $(Q1)$ and $(Q2)$, show that $\J_{\xi}$ possesses a mountain pass geometry, so we can consider the mountain pass value
$$
C(\xi):=\inf_{\gamma\in \Gamma} \max_{t\in [0, 1]} \J_{\xi}(\gamma(t))
$$
where 
$$
\Gamma:=\{\gamma\in C([0, 1], \X_{0}): \gamma(0)=0, \J_{\xi}(\gamma(1))\leq 0\}.
$$
Moreover, we can prove (see Section $2$) that $\xi \mapsto C(\xi)$ is a continuous function and that $C(\xi)$ can be also characterized as
$$
C(\xi)=\inf_{(u, v)\in \mathcal{N}_{\xi}} \J_{\xi}(u, v),
$$
where $\mathcal{N}_{\xi}$ is the Nehari manifold associated with $\J_{\xi}$. 
From the results in \cite{ASy}, we know that, for any fixed $\xi\in \R^{N}$, $C(\xi)$ is achieved and in view of condition $(H3)$ we can deduce that $C(x_{0})\leq C(\xi)$ for any $\xi\in \R^{N}$, which yields
$$
M:=\left\{x\in \R^{N}: C(x)=\inf_{\xi\in \R^{N}} C(\xi)\right\}\neq \emptyset.
$$
We recall that if $Y$ is a given closed set of a topological space $X$, we denote by $cat_{X}(Y)$ the Ljusternik-Schnirelmann category of $Y$ in $X$, that is the least number of closed and contractible sets in $X$ which cover $Y$.
\smallskip

\noindent
With the above notations, the statement of our main result is the following one.
\begin{thm}\label{thm1}
Assume that $(H1)$-$(H3)$ and $(Q1)$-$(Q6)$ hold. Then, for any $\delta>0$ satisfying
$$
M_{\delta}=\{x\in \R^{N}: dist(x, M)\leq\delta\}\subset \Lambda,
$$ 
there exists $\e_{\delta}>0$ such that, for any $\e\in (0, \e_{\delta})$, system \eqref{P} admits at least $cat_{M_{\delta}}(M)$ solutions.
Moreover, if $(u_{\e}, v_{\e})$ is a solution to \eqref{P} and $P_{\e}$ and $Q_{\e}$ are global maximum points of $u_{\e}$ and $v_{\e}$ respectively, then $C(P_{\e}), C(Q_{\e})\rightarrow C(x_{0})$ as $\e\rightarrow 0$, and we have the following estimates: 
\begin{align}\label{DEuv}
u_{\e}(x)\leq \frac{C \e^{N+2s}}{\e^{N+2s}+|x-P_{\e}|^{N+2s}} \,\, \mbox{ and } \,\, v_{\e}(x)\leq \frac{C \e^{N+2s}}{\e^{N+2s}+|x-Q_{\e}|^{N+2s}} \quad \forall x\in \R^{N}. 
\end{align}
\end{thm}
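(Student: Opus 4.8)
The plan is to follow the by now classical scheme of del Pino and Felmer \cite{DF}, combined with Ljusternik--Schnirelmann theory, adapted to the nonlocal vectorial setting as in \cite{AFF, ASy}. After the rescaling $x\mapsto \e x$, system \eqref{P} becomes $(-\Delta)^{s}u+V(\e x)u=Q_{u}(u,v)$, $(-\Delta)^{s}v+W(\e x)v=Q_{v}(u,v)$ in $\R^{N}$, to be studied on $H^{s}(\R^{N})\times H^{s}(\R^{N})$ endowed with the $\e$-dependent norm built from $V(\e x),W(\e x)$. First I would introduce a penalized nonlinearity: fixing a large $k>0$, let $\widehat{Q}$ be the modification of $Q$ that agrees with $Q$ on the set where $Q_{u}+Q_{v}\le \tfrac{1}{k}(V(x_{0})u+W(x_{0})v)$ and equals $\tfrac{1}{k}(V(x_{0})u+W(x_{0})v)$ elsewhere, and set $g(x,u,v)=\chi_{\Lambda_{\e}}(x)Q(u,v)+(1-\chi_{\Lambda_{\e}}(x))\widehat{Q}(u,v)$ with $\Lambda_{\e}=\Lambda/\e$. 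By $(H1)$--$(H3)$ and $(Q1)$--$(Q6)$ the associated functional $J_{\e}$ has a mountain--pass geometry and --- this is the point of the penalization --- satisfies the Palais--Smale condition, since outside $\Lambda_{\e}$ the nonlinearity is dominated by a fixed fraction of the quadratic part, so a nonlocal Lions-type vanishing lemma rules out escape of mass to infinity. One then works on the complete $C^{1,1}$ Nehari manifold $\N_{\e}$ of $J_{\e}$ and sets $c_{\e}=\inf_{\N_{\e}}J_{\e}$.

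The analytic core is a concentration--compactness lemma tied to an energy comparison. Using the ground states of the autonomous functionals $\J_{\xi}$ (which exist by \cite{ASy}) and the continuity of $\xi\mapsto C(\xi)$, one obtains $\limsup_{\e\to 0}c_{\e}\le C(x_{0})$ from a rescaled cut-off ground state centered at $x_{0}$, and the matching lower bound by a Lions-type argument. More precisely I would prove: if $\e_{n}\to 0$ and $(u_{n},v_{n})\in\N_{\e_{n}}$ with $J_{\e_{n}}(u_{n},v_{n})\to C(x_{0})$, then there are $\tilde{y}_{n}\in\R^{N}$ with $\e_{n}\tilde{y}_{n}\to y\in M$ such that $(u_{n}(\cdot+\tilde{y}_{n}),v_{n}(\cdot+\tilde{y}_{n}))$ converges strongly in $H^{s}(\R^{N})\times H^{s}(\R^{N})$ to a ground state of $\J_{y}$. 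This statement, whose proof rests on a careful nonlocal splitting that excludes vanishing and dichotomy, is the main obstacle, and it feeds both the category estimate and the return to \eqref{P}.

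Granting it, the multiplicity follows from the standard Ljusternik--Schnirelmann machinery. Fix a ground state $(\omega,\zeta)$ of the limit problem and, for $y\in M$, define $\Phi_{\e}(y)\in\N_{\e}$ by transplanting a cut-off rescaled copy of $(\omega,\zeta)$ near $y/\e$ and projecting onto $\N_{\e}$, so that $J_{\e}(\Phi_{\e}(y))\to C(x_{0})$ uniformly for $y\in M$. Next define a barycenter map $\beta_{\e}$ on $\widetilde{\N}_{\e}=\{(u,v)\in\N_{\e}:J_{\e}(u,v)\le c_{\e}+h(\e)\}$, $h(\e)\to 0$, as the center of mass of $u^{2}+v^{2}$ rescaled by $\e$, and use the lemma to show $\beta_{\e}(\Phi_{\e}(y))=y+o(1)$ uniformly in $y$ and $\beta_{\e}(\widetilde{\N}_{\e})\subset M_{\delta}$ for $\e$ small. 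Then $\beta_{\e}\circ\Phi_{\e}$ is homotopic to the inclusion $M\hookrightarrow M_{\delta}$, so $cat_{\widetilde{\N}_{\e}}(\widetilde{\N}_{\e})\ge cat_{M_{\delta}}(M)$, and the Ljusternik--Schnirelmann theorem on $\N_{\e}$ produces at least $cat_{M_{\delta}}(M)$ critical points of $J_{\e}$ in $\widetilde{\N}_{\e}$, i.e. solutions of the penalized system.

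It remains to transfer these to \eqref{P} and to prove the concentration and decay claims. A Moser iteration adapted to $(-\Delta)^{s}$, together with a Kato-type inequality for the system, gives uniform $L^{\infty}$ bounds for the rescaled solutions; combined with the concentration lemma this yields $u_{\e},v_{\e}\to 0$ uniformly on $\R^{N}\setminus\Lambda_{\e}$, so for $\e$ small $g(x,u_{\e},v_{\e})$ coincides with $(Q_{u},Q_{v})(u_{\e},v_{\e})$ and the solutions solve the original system; the same lemma forces the (rescaled) maximum points into $\Lambda_{\e}$ and, scaled back, into $M$, giving $C(P_{\e}),C(Q_{\e})\to C(x_{0})$. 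Finally, for \eqref{DEuv} I would build the barrier $w_{\e}(x)=\dfrac{C\,\e^{N+2s}}{\e^{N+2s}+|x-P_{\e}|^{N+2s}}$, use the known computation that $(1+|y|^{N+2s})^{-1}$ is, up to a constant, a supersolution of $(-\Delta)^{s}w+c_{0}w\ge 0$ away from the origin (with the polynomial decay of the fractional kernel), and conclude by the comparison principle for $\e^{2s}(-\Delta)^{s}+c_{0}$, the analogous argument centered at $Q_{\e}$ giving the bound for $v_{\e}$.
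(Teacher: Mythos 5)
Your plan coincides with the paper's in all major respects: rescaling, penalization, mountain-pass and Palais--Smale structure for the modified functional, Nehari-manifold reduction, the concentration--compactness lemma for sequences with energy $\to C(x_{0})$, the $\Phi_{\e}$/barycenter $\beta_{\e}$ pair with the Ljusternik--Schnirelmann count, and Moser iteration plus a polynomially-decaying barrier to return to \eqref{P} and to prove \eqref{DEuv}. The one genuine point of divergence is the penalization itself. You describe a del Pino--Felmer style modification in which $\widehat{Q}$ is defined piecewise according to a comparison between $Q_{u}+Q_{v}$ and a linear expression in $(u,v)$, with $\widehat{Q}$ set equal to that linear/quadratic expression on the complementary set. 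For a scalar equation this is standard, but here the resulting $\widehat{Q}$ is at most Lipschitz in $(u,v)$, not $C^{1}$, let alone $C^{2}$. The paper instead uses Alves' penalization for systems: a smooth radial cutoff $\eta(|(u,v)|)$ interpolates between $Q(u,v)$ and the small quadratic $A(u^{2}+v^{2})$, producing a $C^{2}$ nonlinearity $H$. That extra regularity is not cosmetic: it is exactly what is used in Lemma \ref{lemma2.3}, where $\langle\phi_{\e}'(u,v),(u,v)\rangle$ is controlled via the second derivatives $H_{uu}$, $H_{uv}$, $H_{vv}$ of the penalized nonlinearity, and hence also underlies the $C^{1}$-submanifold structure of $\N_{\e}$ and the constrained Palais--Smale argument of Proposition \ref{prop1}. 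As stated, your penalization would break down precisely where the Nehari-manifold machinery needs smoothness; replacing it with the $\eta(|(u,v)|)$ cutoff makes the rest of your outline go through as in the paper.
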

The proof of Theorem \ref{thm1} is obtained by combining in a suitable way some variational arguments inspired by \cite{Alves, AFF} with  some ideas used in \cite{AM, A3} to deal with fractional Schr\"odinger equations.
Firstly, we use the penalization technique introduced by Alves \cite{Alves} modifying appropriately the function $Q(u, v)$ outside the set $\Lambda$. 
In this way, the energy functional $\J_{\e}$  associated with the modified problem satisfies the assumptions of the mountain pass theorem \cite{ambrosetti-rab}, and we can find a nontrivial solution of the modified problem.
Since we are interested in obtaining a multiplicity result for the modified problem, we study the energy functional $\J_{\e}$ restricted to its Nehari Manifold $\N_{\e}$, and we employ a technique introduced by Benci and Cerami in \cite{BC}. 
The main ingredient is to make precisely comparisons between the category of some sublevel sets of the functional $\J_{\e}$ and the category of the set $M$.
Therefore, using Ljusternik-Schnirelmann theory, we obtain the existence of multiple solutions $(u_{\e}, v_{\e})$ for the modified problem. Now, in order to prove that these solutions are also solutions to \eqref{P} provided that $\e>0$ is sufficiently small, we use a different approach from \cite{Alves, AFF}, because the techniques developed for the local case can not be adapted in our context due to the presence of the nonlocal operator $(-\Delta)^{s}$.
More precisely, motivated by \cite{AlAm, AM, Ambrosio, A3}, we use a Moser iteration argument to estimate the $L^{\infty}$-norm of $(u_{\e}, v_{\e})$, and by constructing suitable comparison functions based on the Bessel kernel \cite{FQT}, we are able to show that $|(u_{\e}(x), v_{\e}(x))|\rightarrow 0$ as $|x|\rightarrow \infty$ uniformly in $\e$.  This fact will be fundamental to achieve our aim. Finally, we also study the behavior of the maximum points of solutions to \eqref{P}. \\
We would like to point out that Theorem \ref{thm1} is in clear accordance with the local case, and it can be seen as the nonlocal counterpart of Theorem $1.1$ in \cite{AFF}.\\
We also emphasize that, to our knowledge, this is the first result in which the penalization technique combined with Ljusternik-Schnirelmann theory allows us to obtain multiple solutions for subcritical fractional system \eqref{P}.
\smallskip

\noindent
The paper is organized as follows. In Section $2$ we collect some preliminary facts about the fractional Sobolev spaces and fractional autonomous systems. 
In Section $3$ we introduce the modified problem. 
In Section $4$ we prove some compactness results for the modified functional. In Section $5$ we present the proof of Theorem \ref{thm1}. 

\section{preliminaries and technical results}

In this preliminary section we recall some results concerning the fractional Sobolev spaces and we introduce the functional setting.\\
For any $s\in (0,1)$ we define $\mathcal{D}^{s, 2}(\R^{N})$ as the completion of $C^{\infty}_{0}(\R^{N})$ with respect to
$$
\int_{\R^{N}} |(-\Delta)^{\frac{s}{2}} u|^{2} dx =\frac{C_{N,s}}{2}\iint_{\R^{2N}} \frac{|u(x)-u(y)|^{2}}{|x-y|^{N+2s}} \, dx \, dy,
$$
or equivalently
$$
\mathcal{D}^{s, 2}(\R^{N})=\left\{u\in L^{2^{*}_{s}}(\R^{N}): \int_{\R^{N}} |(-\Delta)^{\frac{s}{2}} u|^{2} dx<\infty\right\}.
$$
Let us introduce the fractional Sobolev space
$$
H^{s}(\R^{N}):= \left\{u\in L^{2}(\R^{N}) : \int_{\R^{N}} |(-\Delta)^{\frac{s}{2}} u|^{2} dx<\infty \right \}
$$
endowed with the natural norm 
$$
\|u\|_{H^{s}(\R^{N})} := \sqrt{\int_{\R^{N}} |(-\Delta)^{\frac{s}{2}} u|^{2} dx + \int_{\R^{N}} |u|^{2} dx}.
$$

\noindent
We recall the following fundamental embeddings:
\begin{thm}\cite{DPV}\label{Sembedding}
Let $s\in (0,1)$ and $N>2s$. Then there exists a sharp constant $S_{*}=S(N, s)>0$
such that for any $u\in H^{s}(\R^{N})$
\begin{equation}\label{FSI}
\left(\int_{\R^{N}} |u|^{2^{*}_{s}} dx\right)^{\frac{2}{2^{*}_{s}}}  \leq S_{*}^{-1} \int_{\R^{N}} |(-\Delta)^{\frac{s}{2}} u|^{2} dx . 
\end{equation}
Moreover, $H^{s}(\R^{N})$ is continuously embedded in $L^{q}(\R^{N})$ for any $q\in [2, 2^{*}_{s}]$ and compactly in $L^{q}_{loc}(\R^{N})$ for any $q\in [1, 2^{*}_{s})$. 
\end{thm}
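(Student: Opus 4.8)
The plan is to deduce the inequality \eqref{FSI} from the Hardy--Littlewood--Sobolev inequality through the Riesz potential representation of $(-\Delta)^{-s/2}$, then obtain the continuous embeddings by interpolation, and finally establish the local compactness via the Fréchet--Kolmogorov criterion together with a translation estimate in $L^{2}(\R^{N})$.

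For \eqref{FSI}, given $u\in C^{\infty}_{0}(\R^{N})$ set $g=(-\Delta)^{s/2}u\in L^{2}(\R^{N})$. Taking Fourier transforms, $\widehat{g}(\xi)=|\xi|^{s}\widehat{u}(\xi)$, so $\widehat{u}(\xi)=|\xi|^{-s}\widehat{g}(\xi)$; since $0<s<N$ (because $N>2s$), the symbol $|\xi|^{-s}$ is, up to a dimensional constant, the Fourier transform of the Riesz kernel $|x|^{-(N-s)}$, whence
$$
u=c_{N,s}\,|x|^{-(N-s)} * g \quad\text{in } \R^{N}.
$$
The Hardy--Littlewood--Sobolev inequality then gives $\|u\|_{L^{2^{*}_{s}}(\R^{N})}\le C\|g\|_{L^{2}(\R^{N})}$, because $\frac{1}{2^{*}_{s}}=\frac12-\frac{s}{N}$ is precisely the exponent relation for which convolution with $|x|^{-(N-s)}$ maps $L^{2}(\R^{N})$ into $L^{2^{*}_{s}}(\R^{N})$. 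Squaring and extending by density of $C^{\infty}_{0}(\R^{N})$ in $H^{s}(\R^{N})$ (approximating also $g$ in $L^{2}$) yields \eqref{FSI} for some positive constant; the sharp constant is then simply
$$
S_{*}=\inf\Big\{\textstyle\int_{\R^{N}}|(-\Delta)^{s/2}u|^{2}\,dx\ :\ u\in H^{s}(\R^{N}),\ \int_{\R^{N}}|u|^{2^{*}_{s}}\,dx=1\Big\},
$$
which is strictly positive by the inequality just proved.

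For the continuous embeddings, the case $q=2$ is the definition of the $H^{s}$-norm, and $q=2^{*}_{s}$ follows from \eqref{FSI} together with $\int_{\R^{N}}|(-\Delta)^{s/2}u|^{2}\,dx\le\|u\|^{2}_{H^{s}(\R^{N})}$. For $q\in(2,2^{*}_{s})$ write $\frac1q=\frac{1-\theta}{2}+\frac{\theta}{2^{*}_{s}}$ with $\theta\in(0,1)$ and apply the interpolation inequality $\|u\|_{L^{q}}\le\|u\|_{L^{2}}^{1-\theta}\|u\|_{L^{2^{*}_{s}}}^{\theta}$, which combined with the two extreme cases gives $\|u\|_{L^{q}(\R^{N})}\le C\|u\|_{H^{s}(\R^{N})}$. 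For the local compactness, fix a bounded open set $\Omega\subset\R^{N}$ and a bounded sequence $(u_{n})$ in $H^{s}(\R^{N})$. By Plancherel and the elementary bound $|e^{\imath\xi\cdot h}-1|^{2}\le\min\{|\xi|^{2}|h|^{2},4\}\le C|h|^{2s}|\xi|^{2s}$ one obtains the translation estimate
$$
\|u_{n}(\cdot+h)-u_{n}\|^{2}_{L^{2}(\R^{N})}\le C|h|^{2s}\int_{\R^{N}}|(-\Delta)^{s/2}u_{n}|^{2}\,dx\le C'|h|^{2s}.
$$
Interpolating between this estimate and the uniform $L^{2^{*}_{s}}$-bound from \eqref{FSI} shows $\sup_{n}\|u_{n}(\cdot+h)-u_{n}\|_{L^{q}(\R^{N})}\to0$ as $|h|\to0$ for every $q\in[2,2^{*}_{s})$; since $(u_{n})$ is also uniformly bounded in $L^{q}(\Omega)$, the Fréchet--Kolmogorov compactness theorem provides a subsequence converging in $L^{q}(\Omega)$, which is the claimed compact embedding.

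The only genuinely nontrivial input is the sharp fractional Sobolev inequality \eqref{FSI}, which rests on the mapping properties of the Riesz potential, i.e. on the Hardy--Littlewood--Sobolev inequality; everything else (interpolation, the translation estimate, the Fréchet--Kolmogorov criterion) is routine. One may alternatively derive \eqref{FSI} from the Caffarelli--Silvestre $s$-harmonic extension combined with the trace Sobolev inequality on $\R^{N+1}_{+}$, while the explicit optimal constant and the classification of the extremals follow from Lieb's sharp form of the Hardy--Littlewood--Sobolev inequality; full details are in \cite{DPV}.
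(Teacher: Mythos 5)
Your argument is correct, but note that the paper does not prove this statement at all: it is quoted verbatim from \cite{DPV} (the Sobolev inequality is Theorem 6.5 there and the local compactness is the Riesz--Fr\'echet--Kolmogorov argument of Theorem 7.1/Corollary 7.2), so the relevant comparison is with the proof in that reference rather than with anything in this paper. Your route --- writing $u=c_{N,s}|x|^{-(N-s)}*(-\Delta)^{s/2}u$ and invoking Hardy--Littlewood--Sobolev, then interpolating for $q\in(2,2^{*}_{s})$, and finally combining the Fourier-side translation estimate $\|u(\cdot+h)-u\|_{L^{2}}^{2}\leq C|h|^{2s}\|(-\Delta)^{s/2}u\|_{L^{2}}^{2}$ with Fr\'echet--Kolmogorov on bounded sets --- is the classical potential-theoretic proof and is sound: the exponent bookkeeping $\frac{1}{2^{*}_{s}}=\frac12-\frac{s}{N}$ is right, the bound $\min\{|\xi|^{2}|h|^{2},4\}\leq 4|h|^{2s}|\xi|^{2s}$ is valid since $s\in(0,1)$, the restriction $q<2^{*}_{s}$ is correctly used to get $\theta<1$ in the interpolation of the translation estimate, and taking $S_{*}$ to be the optimal constant (positive by the inequality just proved) is exactly what ``sharp constant'' means in the statement. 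By contrast, \cite{DPV} proves the Sobolev inequality by an elementary real-variable argument on the Gagliardo seminorm (a dyadic/cube decomposition), avoiding Fourier analysis and HLS altogether, and obtains the translation estimate for the compactness step directly from the seminorm; your approach is less elementary but has the advantage of connecting to Lieb's sharp HLS inequality, hence to the explicit value of $S_{*}$ and its extremals, which the elementary proof does not give. Two cosmetic remarks: the paper's normalization $\int_{\R^{N}}|(-\Delta)^{s/2}u|^{2}dx=\frac{C_{N,s}}{2}\iint|u(x)-u(y)|^{2}|x-y|^{-N-2s}dxdy$ means your Fourier-side computations agree with the stated quantities only up to the harmless constant $C_{N,s}$, and in the density step you should say explicitly that convergence in $H^{s}$ controls both $\|(-\Delta)^{s/2}(u_{n}-u)\|_{L^{2}}$ and (via a.e.\ convergence of a subsequence and Fatou) the $L^{2^{*}_{s}}$ norm of the limit; both points are routine.
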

Now we collect some technical results which will be useful later. 
Fixed $\xi\in \R^{N}$, let us consider the following subcritical autonomous system
\begin{equation}\label{P0}
\left\{
\begin{array}{ll}
 (-\Delta)^{s}u+V(\xi)u=Q_{u}(u, v)  &\mbox{ in } \R^{N},\\
 (-\Delta)^{s}v+W(\xi)v=Q_{v}(u, v) &\mbox{ in } \R^{N}, \\
u, v>0 &\mbox{ in } \R^{N}.
\end{array}
\right.
\end{equation}
We set $\X_{0}=H^{s}(\R^{N})\times H^{s}(\R^{N})$ endowed with the following norm
$$
\|(u, v)\|^{2}_{\xi}:=\int_{\R^{N}} |(-\Delta)^{\frac{s}{2}} u|^{2}+ |(-\Delta)^{\frac{s}{2}} v|^{2} dx+\int_{\R^{N}} (V(\xi) u^{2}+W(\xi) v^{2}) dx. 
$$
Clearly, $\X_{0}$ is a Hilbert space.
Let us introduce the functional $\J_{\xi}: \X_{0}\rightarrow \R$ defined as 
$$
\J_{\xi}(u, v)=\frac{1}{2}\|(u, v)\|^{2}_{\xi}-\int_{\R^{N}} Q(u, v) dx. 
$$
Since $\J_{\xi}$ has a mountain pass geometry (see \cite{ASy}), we can define the minimax level
$$
C(\xi):=\inf_{\gamma\in \Gamma} \max_{t\in [0, 1]} \J_{\xi}(\gamma(t))
$$
where 
$$
\Gamma:=\{\gamma\in C([0, 1], \X_{0}): \gamma(0)=0, \J_{\xi}(\gamma(1))\leq 0\}.
$$
Using Theorem $3.1$ in \cite{ASy}, we know that problem \eqref{P0} admits a weak solution. 

\noindent
Next we give the proof of the following result which plays an important role to study \eqref{P}.
\begin{lem}\label{C0}
The map $\xi\mapsto C(\xi)$ is continuous.
\end{lem}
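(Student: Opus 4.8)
The plan is to use the Nehari-manifold characterization $C(\xi)=\inf_{(u,v)\in\mathcal{N}_{\xi}}\J_{\xi}(u,v)$ together with the fact that, for a $p$-homogeneous nonlinearity with $p>2$, the value $C(\xi)$ admits the convenient scaling formula
\[
C(\xi)=\left(\frac12-\frac1p\right)\left(\frac{\|(u,v)\|^{2}_{\xi}}{(p\int_{\R^N}Q(u,v)\,dx)^{2/p}}\right)^{\frac{p}{p-2}}\!\!\!\!\text{ evaluated at the projection},
\]
or, more usefully, via the elementary identity $C(\xi)=\inf\{\max_{t\ge0}\J_{\xi}(tu,tv):(u,v)\in\X_0\setminus\{0\},\ \int Q(u,v)>0\}$, where for fixed $(u,v)$ one has $\max_{t\ge0}\J_{\xi}(tu,tv)=\left(\frac12-\frac1p\right)\left(\frac{\|(u,v)\|_{\xi}^{2}}{(p\int Q(u,v)\,dx)^{2/p}}\right)^{p/(p-2)}$. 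The point is that $\xi$ enters only through the norm $\|\cdot\|_{\xi}$, and for $(u,v)$ fixed the map $\xi\mapsto\|(u,v)\|^{2}_{\xi}=\|(-\Delta)^{s/2}u\|_2^2+\|(-\Delta)^{s/2}v\|_2^2+\int(V(\xi)u^2+W(\xi)v^2)\,dx$ is continuous because $V,W$ are continuous (here one uses $u^2,v^2\in L^1$ and, if needed, dominated convergence since $|V(\xi_n)u^2|$ is controlled on bounded sets of $\xi$).

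Concretely, I would argue as follows. Fix $\xi_0$ and a sequence $\xi_n\to\xi_0$. \textbf{Upper semicontinuity:} let $(u,v)\in\N_{\xi_0}$ achieve $C(\xi_0)$ (existence from \cite{ASy}); since $\N_{\xi_n}$ is reached from $(u,v)$ by a single dilation $t_n(u,v)$ with $t_n\to1$ (because $t_n$ solves $t^{p-2}=\|(u,v)\|^2_{\xi_n}/(p\int Q(u,v))$ and the right-hand side converges to the corresponding quantity for $\xi_0$, which equals $1$), we get $C(\xi_n)\le\J_{\xi_n}(t_n u,t_n v)\to\J_{\xi_0}(u,v)=C(\xi_0)$, so $\limsup_n C(\xi_n)\le C(\xi_0)$. \textbf{Lower semicontinuity:} pick $(u_n,v_n)\in\N_{\xi_n}$ with $\J_{\xi_n}(u_n,v_n)=C(\xi_n)$. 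Using $(H3)$ one has a uniform equivalence of the norms $\|\cdot\|_{\xi_n}$ with $\|\cdot\|_{H^s\times H^s}$ (all constants between $\min\{V(x_0),W(x_0)\}$ and the uniform bound of $V,W$ on the compact set $\{\xi_n\}\cup\{\xi_0\}$), hence $(u_n,v_n)$ is bounded in $\X_0$; projecting $(u_n,v_n)$ onto $\N_{\xi_0}$ via a dilation $s_n(u_n,v_n)$ with $s_n\to1$ (same computation as before, using that $\int Q(u_n,v_n)$ is bounded below away from $0$ on a Nehari manifold), we obtain $C(\xi_0)\le\J_{\xi_0}(s_n u_n,s_n v_n)=\J_{\xi_0}(s_nu_n,s_nv_n)-\J_{\xi_n}(u_n,v_n)+C(\xi_n)$, and the difference tends to $0$ because $s_n\to1$ and $\|(u,v)\|^2_{\xi_n}-\|(u,v)\|^2_{\xi_0}=\int((V(\xi_n)-V(\xi_0))u^2+(W(\xi_n)-W(\xi_0))v^2)\to0$ uniformly on the bounded set containing the $(u_n,v_n)$. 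Therefore $C(\xi_0)\le\liminf_n C(\xi_n)$, and combining the two inequalities gives continuity.

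The main technical obstacle is making the projection arguments fully rigorous: one must check that the Nehari manifold $\N_{\xi}$ is nonempty and that for every $(u,v)$ with $\int Q(u,v)\,dx>0$ there is a unique projecting dilation $t=t_\xi(u,v)$, that $t_\xi$ depends continuously (indeed uniformly) on both $(u,v)$ in bounded sets and on $\xi$, and that $\int Q(u_n,v_n)\,dx$ stays bounded away from $0$ along Nehari sequences — this last point follows from $(Q2)$, the Sobolev embedding in Theorem \ref{Sembedding}, and the Nehari identity $\|(u_n,v_n)\|^2_{\xi_n}=p\int Q(u_n,v_n)\,dx$ together with the uniform lower bound $\|(u_n,v_n)\|_{\xi_n}\ge c>0$ valid for nontrivial Nehari points. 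Once these uniform-projection facts are in place, continuity of $\xi\mapsto C(\xi)$ reduces to continuity of $\xi\mapsto\|(u,v)\|^2_\xi$, which is immediate. I would state the uniform projection estimate as a short preliminary claim and then run the two-sided semicontinuity argument above.
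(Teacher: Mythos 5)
Your proof is correct. The upper-semicontinuity half matches the paper: you project the minimizer $w\in\N_{\xi_0}$ onto $\N_{\xi_n}$ via a dilation $t_n w$ and show $t_n\to 1$ from the Nehari identity and continuity of $V,W$, giving $\limsup_n C(\xi_n)\le C(\xi_0)$; this is exactly what the paper does. For lower semicontinuity your route is genuinely different and actually more elementary. The paper takes $w_n\in\N_{\lambda_n}$ attaining $C(\lambda_n)$, translates by maximum points (after a pointwise argument showing $\max(u_n+v_n)$ stays away from $0$), extracts a nontrivial weak limit $\hat{w}$, proves it is a critical point of $\J_\xi$, and then uses weak lower semicontinuity plus Fatou to conclude $C(\xi)\le\liminf_n C(\lambda_n)$. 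You bypass all of that: you project the $\xi_n$-minimizers $(u_n,v_n)$ directly onto $\N_{\xi_0}$ via the explicit fiber map $s_n$, and observe that $s_n^{p-2}=\|(u_n,v_n)\|^2_{\xi_0}/\|(u_n,v_n)\|^2_{\xi_n}\to 1$ because the norm difference is $\int[(V(\xi_0)-V(\xi_n))u_n^2+(W(\xi_0)-W(\xi_n))v_n^2]\,dx\to 0$ (continuity of $V,W$ plus boundedness of $(u_n,v_n)$, the latter following from the upper bound on $C(\xi_n)$ you just proved together with $C(\xi_n)=(\tfrac12-\tfrac1p)\|(u_n,v_n)\|^2_{\xi_n}$). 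This avoids the paper's reliance on a concentration argument and on Theorem \ref{prop2.2}-type compactness; the payoff is a shorter, purely algebraic proof exploiting the $p$-homogeneity of $Q$. You do still use, as the paper does, that the infimum $C(\xi_n)$ is attained (from \cite{ASy}); if you wanted to dispense with that you could run the same projection argument with near-minimizers at the cost of an extra $\varepsilon$. Two small points worth making explicit in a final write-up: (i) the boundedness of $(u_n,v_n)$ in $\X_0$ requires the already-established $\limsup_n C(\xi_n)\le C(\xi_0)$, so state lower semicontinuity after the upper bound; (ii) the uniform lower bound $\|(u_n,v_n)\|_{\xi_n}\ge c>0$ needs the uniform equivalence of the $\|\cdot\|_{\xi_n}$ norms on the relevant compact set of parameters, which you correctly attribute to $(H3)$ and continuity of $V,W$.
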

\begin{proof}
For $\xi\in \R^{N}$, let $\{\zeta_{n}\}, \{\lambda_{n}\}\subset \R^{N}$ be two sequences such that
\begin{compactenum}[$(a)$]
\item $\zeta_{n}\rightarrow \xi$ and $C(\zeta_{n})\geq C(\xi)$ for all $n\in \mathbb{N}$,
\item $\lambda_{n}\rightarrow \xi$ and $C(\lambda_{n})\leq C(\xi)$ for all $n\in \mathbb{N}$.
\end{compactenum}
We aim to prove that $C(\zeta_{n}), C(\lambda_{n})\rightarrow C(\xi)$ as $n\rightarrow \infty$.
Using Theorem $3.1$ in \cite{ASy}, we know that there exists $w=(u, v)\in \X_{0}$ such that 
$$
\J_{\xi}(w)=C(\xi) \mbox{ and } \J_{\xi}'(w)=0.
$$
For any $n\in \mathbb{N}$, let $t_{n}>0$ be such that 
$$
C(\zeta_{n})\leq \J_{\zeta_{n}}(t_{n} w)=\max_{t\geq 0} \J_{\zeta_{n}}(t w).
$$
We can show that $t_{n}\rightarrow 1$. Indeed $\J_{\xi}'(w)=0$ and \eqref{2.1} imply that
$$
\|(u, v)\|_{\xi}^{2}=p\int_{\R^{N}} Q(u, v) dx.
$$
By the definition of $t_{n}>0$ we know that $\frac{d}{dt} \J_{\zeta_{n}}(t u, t v)\mid_{t=t_{n}}=0$, so, using $(Q1)$ and \eqref{2.1}, we get
$$
\|(u, v)\|_{\zeta_{n}}^{2}=pt_{n}^{p-1}\int_{\R^{N}} Q(u, v) dx.
$$
Thus, using the continuity of $V$ and $W$, and the fact that $\zeta_{n}\rightarrow \xi$, we deduce that $t_{n}\rightarrow 1$ as $n\rightarrow \infty$. Moreover, we can see that $\J_{\zeta_{n}}(t_{n} w)\rightarrow \J_{\xi}(w)$ as $n\rightarrow \infty$. 
Therefore
\begin{equation*}
\limsup_{n\rightarrow \infty} C(\zeta_{n})\leq C(\xi).
\end{equation*}
From $(a)$ we can deduce that $\liminf_{n\rightarrow \infty} C(\zeta_{n})\geq C(\xi)$, which implies that $C(\zeta_{n})\rightarrow C(\xi)$ as $n\rightarrow \infty$.
Now we show that $C(\lambda_{n})\rightarrow C(\xi)$ as $n\rightarrow \infty$. Using Theorem $3.1$ in \cite{ASy}, there exists $w_{n}=(u_{n}, v_{n})$ such that 
\begin{equation}\label{AS7}
\J_{\lambda_{n}}(w_{n})=C(\lambda_{n}) \mbox{ and } \J_{\lambda_{n}}'(w_{n})=0.
\end{equation}
Let $p_{n}, q_{n}\in \R^{N}$ be such that 
$$
u_{n}(p_{n})=\max_{x\in \R^{N}} u_{n}(x) \mbox{ and } v_{n}(q_{n})=\max_{x\in \R^{N}} v_{n}(x),
$$
and we set $z_{n}=u_{n}+v_{n}$. By $(Q2)$, there exists $K>0$ such that $z_{n}$ satisfies
$$
(-\Delta)^{s} z_{n}+\alpha z_{n}\leq K z_{n}^{p-1} \mbox{ in } \R^{N},
$$
where $\alpha=\min\{V(x_{0}), W(x_{0})\}$.
If we denote by $z_{n}(r_{n})=\max_{x\in \R^{N}} z_{n}(x)$, we can use the integral representation formula for the fractional Laplacian (see \cite{DPV}) to see that 
$$
(-\Delta)^{s} z_{n}(r_{n})=\frac{C_{N, s}}{2}\int_{\R^{N}} \frac{2z_{n}(r_{n})-z_{n}(r_{n}+x)-z_{n}(r_{n}-x)}{|x|^{N+2s}} dx\geq 0.
$$
Therefore,
$$
0<\alpha \leq K z_{n}(r_{n})^{p-2}\leq K(u_{n}(p_{n})+v_{n}(q_{n}))^{p-2}.
$$
Hence, there exists $\delta=(\frac{\alpha}{K})^{\frac{1}{p-2}}>0$ such that for any $n\in \mathbb{N}$
$$
u_{n}(p_{n})\geq \frac{\delta}{4} \mbox{ or } v_{n}(q_{n})\geq \frac{\delta}{4} .
$$
Consequently, there exists an infinite subset $\mathbb{M}\subset \mathbb{N}$ such that at least one of the following cases occurs:
\begin{compactenum}[$(i)$]
\item $u_{n}(p_{n})\geq \frac{\delta}{4}$ for any $n\in \mathbb{M}$,
\item $v_{n}(q_{n})\geq \frac{\delta}{4}$ for any $n\in \mathbb{M}$.
\end{compactenum} 
Let us assume that $(i)$ occurs, and  define
$$
\hat{u}_{n}(x)=u_{n}(x+p_{n}) \mbox{ and } \hat{v}_{n}(x)=v_{n}(x+p_{n}).
$$
From \eqref{AS7}, we may assume, up to a subsequence, that $\hat{u}_{n}\rightharpoonup \hat{u}$ and $\hat{v}_{n}\rightharpoonup \hat{v}$ in $\X_{0}$. Since $\lambda_{n}\rightarrow \xi$, we can note that the function $\hat{w}=(\hat{u}, \hat{v})$ verifies $\J_{\xi}(\hat{w})=C(\xi)$ and $\J'_{\xi}(\hat{w})=0$.
Using \eqref{AS7}, we can see that $\langle \J_{\lambda_{n}}'(w_{n}), \hat{w}(\cdot-p_{n})\rangle=0$, that is $$
\langle \J_{\lambda_{n}}'(\hat{w}_{n}), \hat{w}\rangle=0.
$$
By the weak convergence in $\X_{0}$ and $\lambda_{n}\rightarrow \xi$ as $n\rightarrow \infty$, we can pass to the limit in the above relation and we find $\langle \J_{\xi}'(\hat{w}), \hat{w}\rangle=0$. 
Now, fix $\theta\in (2, p)$. Using the weak convergence, Fatou's Lemma, \eqref{2.11} and $\langle \J_{\xi}'(\hat{w}), \hat{w}\rangle=0$, we can see that
\begin{align*}
C(\xi)&\leq \J_{\xi}(\hat{w})=\J_{\xi}(\hat{w})-\frac{1}{\theta}\langle \J_{\xi}'(\hat{w}), \hat{w}\rangle \\
&=\left(\frac{1}{2}-\frac{1}{\theta}\right) \|\hat{w}\|^{2}_{\xi}+\left(\frac{p}{\theta}-1\right) \int_{\R^{N}} Q(\hat{u}, \hat{v}) dx \\
&\leq \liminf_{n\rightarrow \infty} \left[\left(\frac{1}{2}-\frac{1}{\theta}\right) \|(\hat{u}_{n}, \hat{v}_{n})\|^{2}_{\lambda_{n}}+\left(\frac{p}{\theta}-1\right) \int_{\R^{N}} Q(\hat{u}_{n}, \hat{v}_{n}) dx\right] \\
&=\liminf_{n\rightarrow \infty} \left[ \J_{\lambda_{n}}(u_{n}, v_{n})-\frac{1}{\theta} \langle \J_{\lambda_{n}}'(u_{n}, v_{n}), (u_{n}, v_{n})\rangle \right] \\
&=\liminf_{n\rightarrow \infty} \J_{\lambda_{n}}(u_{n}, v_{n})=\liminf_{n\rightarrow \infty} C(\lambda_{n}).
\end{align*}
This and condition $(b)$ yields $C(\lambda_{n})\rightarrow C(\xi)$ as $n\rightarrow \infty$.

\end{proof}

Let us note that
$$
C(\xi)=\inf_{(u, v)\in \N_{\xi}} \J_{\xi}(u, v)
$$
where
$$
\N_{\xi}:=\{(u, v)\in \X_{0}\setminus \{(0, 0)\}:  \langle \J'_{\xi}(u, v),(u, v)\rangle=0\}.
$$
Since the minimax level $C(\xi)$ is achieved and using $(H1)$-$(H3)$, we can see that
$$
M=\left\{x\in \R^{N}: C(x)=\inf_{\xi\in \R^{N}} C(\xi)\right\}\neq \emptyset.
$$ 
Now we prove the following fundamental result.
\begin{lem}\label{C00}
$C^{*}:=C(x_{0})=\inf_{\xi\in \Lambda} C(\xi)<\min_{\xi\in \partial \Lambda} C(\xi)$.
\end{lem}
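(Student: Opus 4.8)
\emph{Proof proposal.} The plan is to obtain both assertions from a (non-strict, then strict) monotonicity of the map $\xi\mapsto C(\xi)$ with respect to the potential values $(V(\xi),W(\xi))$, combined with the continuity proved in Lemma \ref{C0}; this is the nonlocal analogue of the classical del Pino--Felmer / Alves--Figueiredo--Furtado argument. The basic observation is that if $V(\xi_{1})\leq V(\xi_{2})$ and $W(\xi_{1})\leq W(\xi_{2})$, then $\|w\|_{\xi_{1}}\leq\|w\|_{\xi_{2}}$ for every $w\in\X_0$, and since $\int_{\R^{N}}Q(u,v)\,dx$ does not depend on $\xi$, this yields $\J_{\xi_{1}}(tw)\leq\J_{\xi_{2}}(tw)$ for all $t\geq0$ and all $w\in\X_0$. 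Pick a minimizer $w_{2}=(u_{2},v_{2})$ for $C(\xi_{2})$ (it is achieved, by the results in \cite{ASy}); since $w_{2}\in\N_{\xi_{2}}$ and $p>2$, identity \eqref{2.1} gives $\int_{\R^{N}}Q(w_{2})\,dx=\frac{1}{p}\|w_{2}\|^{2}_{\xi_{2}}>0$, so there is a unique $t_{*}>0$ with $t_{*}w_{2}\in\N_{\xi_{1}}$. Then
$$
C(\xi_{1})\leq\J_{\xi_{1}}(t_{*}w_{2})\leq\J_{\xi_{2}}(t_{*}w_{2})\leq\max_{t\geq0}\J_{\xi_{2}}(tw_{2})=\J_{\xi_{2}}(w_{2})=C(\xi_{2}),
$$
the last maximum being attained at $t=1$ by the fibering structure of $\J_{\xi_{2}}$ along the ray $\{tw_{2}\}$. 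Applying this with $\xi_{1}=x_{0}$ and $\xi_{2}$ arbitrary, $(H3)$ gives $C(x_{0})\leq C(\xi)$ for all $\xi\in\R^{N}$; since $x_{0}\in\Lambda$, this proves $C^{*}=C(x_{0})=\inf_{\xi\in\Lambda}C(\xi)$.

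For the strict inequality I would rerun the same scheme with $\xi_{2}=\xi\in\partial\Lambda$ and $\xi_{1}=x_{0}$, now exploiting that the potential inequalities are \emph{strict}: by $(H1)$ and $(H2)$, $V(\xi)\geq\rho_{0}>V(x_{0})$ and $W(\xi)\geq\rho_{0}>W(x_{0})$. Let $w=(u,v)$ be a minimizer for $C(\xi)$ and $t_{*}>0$ with $t_{*}w\in\N_{x_{0}}$. Since $w\neq(0,0)$,
$$
\|t_{*}w\|_{\xi}^{2}-\|t_{*}w\|_{x_{0}}^{2}=t_{*}^{2}\int_{\R^{N}}\big[(V(\xi)-V(x_{0}))u^{2}+(W(\xi)-W(x_{0}))v^{2}\big]\,dx>0,
$$
hence $\J_{x_{0}}(t_{*}w)<\J_{\xi}(t_{*}w)\leq\max_{t\geq0}\J_{\xi}(tw)=C(\xi)$, and together with $C(x_{0})\leq\J_{x_{0}}(t_{*}w)$ this gives $C(x_{0})<C(\xi)$ for every $\xi\in\partial\Lambda$.

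To conclude, $\partial\Lambda$ is compact (being closed and bounded) and $C$ is continuous by Lemma \ref{C0}, so $\min_{\xi\in\partial\Lambda}C(\xi)$ is attained; the pointwise strict inequality just established then upgrades to $C^{*}=C(x_{0})<\min_{\xi\in\partial\Lambda}C(\xi)$, which is the claim. The only genuinely delicate point is the strict-monotonicity step: it requires having an honest minimizer on the Nehari manifold at the point $\xi\in\partial\Lambda$ (guaranteed by \cite{ASy}) and the fact that such a ground state is nontrivial, so that the strict gap between the potentials at $\xi$ and at $x_{0}$ is transmitted to a strict gap between the energy levels; the remaining ingredients are the routine fibering-map analysis on $\N_{\xi}$ and $\N_{x_{0}}$.
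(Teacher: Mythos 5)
Your proof is correct, and it takes a route that differs in presentation from the paper's, though the comparison idea at the core is the same. The paper introduces an auxiliary functional $F_{\rho_{0}}$ (the autonomous energy with constant potentials $V\equiv W\equiv\rho_{0}$) together with its mountain pass level $b_{\rho_{0}}$, and then sandwiches: $(H1)$ gives $F_{\rho_{0}}\leq\J_{\xi}$ on $\X_{0}$ for all $\xi\in\partial\Lambda$, hence $b_{\rho_{0}}\leq\min_{\partial\Lambda}C$, while $(H2)$ gives $F_{\rho_{0}}>\J_{x_{0}}$ on $\X_{0}\setminus\{0\}$, hence $C(x_{0})<b_{\rho_{0}}$; the intermediate level $b_{\rho_{0}}$ supplies a \emph{uniform} gap between $C(x_{0})$ and $C(\xi)$ for $\xi\in\partial\Lambda$, so no compactness/continuity argument is needed to conclude the strict inequality with $\inf_{\partial\Lambda}C$. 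Your version instead proves a Nehari-projection monotonicity statement directly (non-strict, and then strict using the nontrivial ground state and the strict gap $V(\xi),W(\xi)\geq\rho_{0}>V(x_{0}),W(x_{0})$), obtaining the pointwise bound $C(x_{0})<C(\xi)$ for each $\xi\in\partial\Lambda$, and then invokes the continuity of $C$ from Lemma \ref{C0} and compactness of $\partial\Lambda$ (valid, since $\Lambda$ is bounded) to upgrade to the minimum. Two remarks. First, your route makes fully explicit the step that the paper in fact leaves implicit: the inference from the strict pointwise inequality $F_{\rho_{0}}>\J_{x_{0}}$ to the strict level inequality $C(x_{0})<b_{\rho_{0}}$ requires exactly the kind of Nehari-projection argument you wrote, applied to a ground state of $F_{\rho_{0}}$. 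In that sense your proof is a more honest rendering of the same idea. Second, the paper's factoring through $b_{\rho_{0}}$ is marginally more economical because it delivers a uniform gap and therefore does not need Lemma \ref{C0}; your proof makes the lemma logically dependent on continuity of $\xi\mapsto C(\xi)$, which is harmless here since that lemma is already established, but it is worth being aware of when the statements are reordered.
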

\begin{proof}
Let us denote by $b_{\rho_{0}}$ the minimax level of mountain pass theorem associated with the functional $F_{\rho_{0}}: \X_{0}\rightarrow \R$ given by
\begin{equation*}
F_{\rho_{0}}(u,v)= \frac{1}{2}\int_{\R^{N}} (|(-\Delta)^{\frac{s}{2}} u|^{2}+ |(-\Delta)^{\frac{s}{2}} v|^{2} + \rho_{0}u^{2} + \rho_{0}v^{2})\, dx - \int_{\R^{N}} Q(u,v) \, dx. 
\end{equation*}
Using the definition of $F_{\rho_{0}}$ and $(H1)$, we have for $\xi \in \partial \Lambda$  
\begin{equation*}
F_{\rho_{0}}(u, v) \leq \J_{\xi} (u, v) \, \mbox{ for all } (u, v)\in \X_{0},
\end{equation*}
so we get
\begin{equation*}
b_{\rho_{0}}\leq C(\xi) \, \mbox{ for all }  \xi \in \partial \Lambda.
\end{equation*}
Thus 
\begin{equation}\label{2.1-1}
b_{\rho_{0}}\leq \min_{\xi \in \partial \Lambda} C(\xi). 
\end{equation}
On the other hand, from $(H2)$, we can see that
\begin{equation*}
F_{\rho_{0}}(u, v) > \J_{x_{0}}(u,v)\, \mbox{ for all }  (u,v)\in \X_{0}\setminus \{0\},
\end{equation*}
from where we can conclude that
\begin{equation*}
C(x_{0})<b_{\rho_{0}}
\end{equation*}
which implies 
\begin{equation}\label{2.2-1}
\inf_{\xi \in \Lambda} C(\xi)< b_{\rho_{0}}.
\end{equation}
Putting together \eqref{2.1-1} and \eqref{2.2-1} we obtain
\begin{equation*}
\inf_{\xi \in \Lambda} C(\xi)< \min_{\xi \in \partial \Lambda} C(\xi).
\end{equation*}
This ends the proof of lemma. 
\end{proof}

Finally, we recall  the following compactness property related to minimizing sequences of the autonomous system, whose proof follows the lines of Theorem $3.1$ in \cite{ASy}.
\begin{thm}\cite{ASy}\label{prop2.2}
Let $\{(u_{n}, v_{n})\}\subset \N_{\xi}$ be a sequence such that $\J_{\xi}(u_{n}, v_{n})\rightarrow C^{*}$ and $(u_{n}, v_{n})\rightharpoonup (u, v)$ in $\X_{0}$. Then there exists $\{\tilde{y}_{n}\}\subset \R^{N}$
such that the translated sequence $\{(u_{n}(\cdot+\tilde{y}_{n}), v_{n}(\cdot+\tilde{y}_{n}))\}$ strongly converges to $(\tilde{u}, \tilde{v})\in \N_{\xi}$ with $\J_{\xi}(\tilde{u}, \tilde{v})=C^{*}$.
Moreover, if $(u, v)\neq 0$, then $\{\tilde{y}_{n}\}$ can be taken identically zero and therefore $(u_{n}, v_{n})\rightarrow (u, v)$ in $\X_{0}$.
\end{thm}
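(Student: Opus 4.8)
The plan is to prove this by a concentration--compactness argument parallel to the proof of Theorem $3.1$ in \cite{ASy}, the only novelty being that we start from an arbitrary minimizing sequence on $\N_{\xi}$ rather than from the mountain pass sequence. Before anything else I would record that the hypotheses already force $C(\xi)=C^{*}$: since $\J_{\xi}(w)\geq C(\xi)$ for every $w\in\N_{\xi}$ we get $C^{*}=\lim_{n}\J_{\xi}(u_{n},v_{n})\geq C(\xi)$, while $(H3)$ gives $C(\xi)\geq C(x_{0})=C^{*}$; in particular $C^{*}>0$.

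First I would establish boundedness in $\X_{0}$: by \eqref{2.1}, membership in $\N_{\xi}$ means $\|(u_{n},v_{n})\|_{\xi}^{2}=p\int_{\R^{N}}Q(u_{n},v_{n})\,dx$, hence $(\frac12-\frac1p)\|(u_{n},v_{n})\|_{\xi}^{2}=\J_{\xi}(u_{n},v_{n})\to C^{*}$, so $\{(u_{n},v_{n})\}$ is bounded, $\|(u_{n},v_{n})\|_{\xi}^{2}\to\frac{2p}{p-2}C^{*}$ and $\int_{\R^{N}}Q(u_{n},v_{n})\,dx\to\frac{2}{p-2}C^{*}>0$. Next I would rule out vanishing: if $\sup_{y\in\R^{N}}\int_{B_{1}(y)}(u_{n}^{2}+v_{n}^{2})\,dx\to 0$, then Lions' lemma gives $u_{n},v_{n}\to 0$ in $L^{q}(\R^{N})$ for all $q\in(2,2^{*}_{s})$, and then $(Q2)$ together with the extension of $Q$ by zero forces $\int_{\R^{N}}Q(u_{n},v_{n})\,dx\to 0$, a contradiction. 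So there are $\beta>0$ and $\tilde y_{n}\in\R^{N}$ with $\int_{B_{1}(\tilde y_{n})}(u_{n}^{2}+v_{n}^{2})\,dx\geq\beta$ for $n$ large; setting $\hat u_{n}=u_{n}(\cdot+\tilde y_{n})$, $\hat v_{n}=v_{n}(\cdot+\tilde y_{n})$ and using that $\J_{\xi}$ is translation invariant (the coefficients of \eqref{P0} are constant), $(\hat u_{n},\hat v_{n})$ is still a minimizing sequence on $\N_{\xi}$ with the same bounds; up to a subsequence $(\hat u_{n},\hat v_{n})\rightharpoonup(\tilde u,\tilde v)$ in $\X_{0}$, and the compact embedding $H^{s}(\R^{N})\hookrightarrow L^{2}_{loc}(\R^{N})$ of Theorem \ref{Sembedding} gives $\int_{B_{1}(0)}(\tilde u^{2}+\tilde v^{2})\,dx\geq\beta$, so $(\tilde u,\tilde v)\neq(0,0)$.

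Then I would identify $(\tilde u,\tilde v)$ as a ground state. Using Ekeland's variational principle on the $C^{1}$ manifold $\N_{\xi}$ (bounded away from $0$, with $\J_{\xi}$ coercive on it) I may assume, after replacing $\{(u_{n},v_{n})\}$ by a sequence at vanishing $\X_{0}$-distance from it (a distance preserved by translations), that $\J_{\xi}'(u_{n},v_{n})\to 0$ in $\X_{0}^{*}$, hence also $\J_{\xi}'(\hat u_{n},\hat v_{n})\to 0$; testing against $\varphi\in C^{\infty}_{0}(\R^{N})^{2}$ and using the boundedness, $(Q2)$, local compactness and a.e. convergence one passes to the limit to get $\J_{\xi}'(\tilde u,\tilde v)=0$, so $(\tilde u,\tilde v)\in\N_{\xi}$ and $\J_{\xi}(\tilde u,\tilde v)\geq C(\xi)=C^{*}$. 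For the reverse inequality I would repeat the computation in the proof of Lemma \ref{C0}: fixing $\theta\in(2,p)$, the functional $w\mapsto\J_{\xi}(w)-\frac1\theta\langle\J_{\xi}'(w),w\rangle=(\frac12-\frac1\theta)\|w\|_{\xi}^{2}+(\frac p\theta-1)\int_{\R^{N}}Q(u,v)\,dx$ has all favorable signs ($\frac12-\frac1\theta>0$, $\frac p\theta-1>0$, $Q\geq 0$), so by weak lower semicontinuity of the norm and Fatou's lemma it is $\leq\liminf_{n}[\J_{\xi}(\hat u_{n},\hat v_{n})-\frac1\theta\langle\J_{\xi}'(\hat u_{n},\hat v_{n}),(\hat u_{n},\hat v_{n})\rangle]=\liminf_{n}\J_{\xi}(\hat u_{n},\hat v_{n})=C^{*}$; therefore $\J_{\xi}(\tilde u,\tilde v)=C^{*}$.

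Finally I would obtain strong convergence: $(\tilde u,\tilde v)\in\N_{\xi}$ with $\J_{\xi}(\tilde u,\tilde v)=C^{*}$ forces $\|(\tilde u,\tilde v)\|_{\xi}^{2}=\frac{2p}{p-2}C^{*}=\lim_{n}\|(\hat u_{n},\hat v_{n})\|_{\xi}^{2}$, and in the Hilbert space $\X_{0}$ weak convergence together with convergence of norms gives $(\hat u_{n},\hat v_{n})\to(\tilde u,\tilde v)$ strongly, hence the same for the translates of the original sequence. For the last assertion, if the original weak limit $(u,v)$ is nonzero, the sequence does not vanish, so I may take $\tilde y_{n}\equiv 0$ and run the same argument without translating: $\J_{\xi}'(u,v)=0$, so $(u,v)\in\N_{\xi}$, $C^{*}\leq\J_{\xi}(u,v)\leq\liminf_{n}\J_{\xi}(u_{n},v_{n})=C^{*}$, and therefore $(u_{n},v_{n})\to(u,v)$ in $\X_{0}$. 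The main obstacle is the step identifying the translated limit as a critical point of $\J_{\xi}$: it requires upgrading ``minimizing on $\N_{\xi}$'' to a genuine Palais--Smale sequence via Ekeland's principle and the Nehari structure, and then passing to the limit in the nonlinear term over the unbounded domain $\R^{N}$ (done by combining local compactness with the subcritical bound $(Q2)$ and a cut-off argument); the remaining steps are routine adaptations of \cite{ASy}.
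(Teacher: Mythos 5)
The paper does not reprove this statement; it cites it as Theorem 3.1 of \cite{ASy} and merely remarks that the argument follows those lines. Your reconstruction is correct and is exactly what that reference's argument amounts to: the preliminary observation that the hypotheses force $C(\xi)=C^{*}$, boundedness and the identities $\|(u_{n},v_{n})\|_{\xi}^{2}\to\frac{2p}{p-2}C^{*}$, $\int Q\to\frac{2}{p-2}C^{*}$ from the Nehari structure, non-vanishing via Lions' lemma and $(Q2)$, translation to a nonzero weak limit, upgrading the Nehari-minimizing sequence to a Palais--Smale sequence (Ekeland plus the fact that $\N_{\xi}$ is a natural constraint, since $\langle G'(u,v),(u,v)\rangle=(2-p)\|(u,v)\|_{\xi}^{2}$ is bounded away from zero on $\N_{\xi}$), the Fatou/weak-lower-semicontinuity bound giving $\J_{\xi}(\tilde u,\tilde v)=C^{*}$, and finally strong convergence from weak convergence plus convergence of the $\X_{0}$-norms. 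The only small point to keep clean is that in the last paragraph the inequality $\J_{\xi}(u,v)\le\liminf_{n}\J_{\xi}(u_{n},v_{n})$ is not weak lower semicontinuity of $\J_{\xi}$ itself (which fails because of the $-\int Q$ term) but the same Fatou computation written for $(u_{n},v_{n})\in\N_{\xi}$, i.e. $\J_{\xi}(u,v)=(\tfrac12-\tfrac1p)\|(u,v)\|_{\xi}^{2}\le(\tfrac12-\tfrac1p)\liminf_{n}\|(u_{n},v_{n})\|_{\xi}^{2}$; you already carried this out correctly for the translated sequence, so this is only a matter of phrasing.
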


\section{the modified problem}
In this section we introduce a penalty function in order to study solutions of problem \eqref{P}.
Firstly, we observe that, by using the change of variable $x\mapsto \e x$, the analysis of \eqref{P} is equivalent to consider the following problem
\begin{equation}\label{P'}
\left\{
\begin{array}{ll}
 (-\Delta)^{s}u+V(\e x)u=Q_{u}(u, v)  &\mbox{ in } \R^{N},\\
 (-\Delta)^{s}v+W(\e x) v=Q_{v}(u, v) &\mbox{ in } \R^{N}, \\
u, v>0 &\mbox{ in } \R^{N}.
\end{array}
\right.
\end{equation}
At this point we choose $a>0$ and $\eta\in C^{2}(\R, \R)$ a non-increasing function such that 
\begin{equation}\label{3}
\eta=1 \mbox{ on } (-\infty, a], \, \eta=0 \mbox{ on } [5a, \infty), \, |\eta'(t)|\leq \frac{C}{a}, \, \mbox{ and } |\eta''(t)|\leq \frac{C}{a^{2}}.
\end{equation} 
Using $\eta$, we introduce the following function $\hat{Q}: \R^{2}\rightarrow \R$ by setting
$$
\hat{Q}(u, v):=\eta(|(u, v)|) Q(u, v)+(1-\eta(|(u, v)|) A(u^{2}+v^{2}),
$$
where 
$$
A:=\max\left\{ \frac{Q(u, v)}{u^{2}+v^{2}}: (u, v)\in \R^{2}, a\leq |(u, v)|\leq 5a \right\}>0.
$$
Let us observe that $A\rightarrow 0$ as $a\rightarrow 0^{+}$, so we may assume that $A<\min\{V(x_{0}), W(x_{0})\}$.
Now we define $H: \R^{N}\times \R^{2}\rightarrow \R$ by setting
$$
H(x, u, v):=\chi_{\Lambda}(x) Q(u, v)+(1-\chi_{\Lambda}(x)) \hat{Q}(u, v).
$$
As in \cite{Alves}, we can prove the following useful properties of the penalized function $H$.
\begin{lem}\label{lem2.2Alves}
The function $H$ satisfies the following estimates
\begin{equation}\label{2.4A}
pH(x, u, v)=u H_{u}(x, u, v)+v H_{v}(x, u, v) \mbox{ for any } x\in \Lambda
\end{equation}
and
\begin{equation}\label{2.5A}
2H(x, u, v)\leq u H_{u}(x, u, v)+v H_{v}(x, u, v) \mbox{ for any } x\in \R^{N}\setminus \Lambda.
\end{equation}
Moreover, for any $k>0$ fixed, we can choose the constant $a>0$ sufficiently small such that 
\begin{equation}\label{2.6A}
u H_{u}(x, u, v)+v H_{v}(x, u, v)\leq \frac{1}{k} (V(x) u^{2}+W(x) v^{2}) \mbox{ for any } x\in \R^{N}\setminus \Lambda
\end{equation}
and
\begin{equation}\label{2.7A}
\frac{|H_{u}(x, u, v)|}{a}, \frac{|H_{v}(x, u, v)|}{a} \leq \frac{\alpha}{4} \mbox{ for any } x\in \R^{N}\setminus \Lambda,
\end{equation}
where $\alpha:=\min\{V(x_{0}), W(x_{0})\}$.
\end{lem}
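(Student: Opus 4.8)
The plan is to reduce all four estimates to pointwise statements in the variables $(u,v)\in\R^{2}$ at a fixed $x$, using that $H(x,\cdot,\cdot)=Q$ when $x\in\Lambda$ and $H(x,\cdot,\cdot)=\hat{Q}$ when $x\in\R^{N}\setminus\Lambda$, so that everything reduces to the chain rule for $\hat{Q}$. Writing $r=|(u,v)|$, one has
\begin{equation*}
\hat{Q}_{u}(u,v)=\eta'(r)\,\frac{u}{r}\,\big(Q(u,v)-Ar^{2}\big)+\eta(r)\,Q_{u}(u,v)+2u\,\big(1-\eta(r)\big)A,
\end{equation*}
and symmetrically for $\hat{Q}_{v}$. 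The facts I will combine with this are: the Euler identity \eqref{2.1}; the growth bound $(Q2)$, which together with \eqref{2.1} and $(Q6)$ gives $0\le Q(u,v)\le C_{1}r^{p}$ and $|Q_{u}|+|Q_{v}|\le C_{2}r^{p-1}$; the definition of $A$; assumption $(H3)$, which yields $V(x)u^{2}+W(x)v^{2}\ge\alpha\,r^{2}$ with $\alpha=\min\{V(x_{0}),W(x_{0})\}$; and the fact that $A\to0$ and $a^{p-2}\to0$ as $a\to0^{+}$.

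Estimate \eqref{2.4A} will be immediate, since for $x\in\Lambda$ it is exactly \eqref{2.1}. For \eqref{2.5A} I would multiply the formula above by $u$, its analogue by $v$, add them, and use $uQ_{u}+vQ_{v}=pQ$; after the quadratic terms cancel against $-2\hat{Q}$ this leaves
\begin{equation*}
u\hat{Q}_{u}+v\hat{Q}_{v}-2\hat{Q}=\eta'(r)\,r\,\big(Q(u,v)-Ar^{2}\big)+(p-2)\,\eta(r)\,Q(u,v).
\end{equation*}
The second summand is $\ge0$ because $p>2$ and $\eta,Q\ge0$. For the first, $\eta$ non-increasing gives $\eta'\le0$, and $\eta'(r)\ne0$ forces $a\le r\le5a$; but on that annulus the very definition of $A$ gives $Q(u,v)\le Ar^{2}$, so $Q-Ar^{2}\le0$ and the product is $\ge0$. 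This proves \eqref{2.5A}.

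For \eqref{2.6A} I would bound $u\hat{Q}_{u}+v\hat{Q}_{v}$ from above: the piece $2r^{2}(1-\eta(r))A$ is $\le2Ar^{2}$, while the $\eta$- and $\eta'$-pieces are supported in $\{r\le5a\}$, where $|\eta'|\le C/a$ and $Q\le C_{1}r^{p}\le C_{1}(5a)^{p-2}r^{2}$, so each is $\le C_{3}\big((5a)^{p-2}+A\big)r^{2}$. This yields $u\hat{Q}_{u}+v\hat{Q}_{v}\le\delta(a)\,r^{2}$ with $\delta(a)\to0$ as $a\to0^{+}$; combining with $r^{2}\le\alpha^{-1}(V(x)u^{2}+W(x)v^{2})$ from $(H3)$ and choosing $a$ so small that $\delta(a)\le\alpha/k$ gives \eqref{2.6A}. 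Estimate \eqref{2.7A} is obtained in the same way, estimating $|\hat{Q}_{u}|$ and $|\hat{Q}_{v}|$ term by term: the $\eta$- and $\eta'$-pieces live in $\{r\le5a\}$ and, via $|\eta'|\le C/a$ and $(Q2)$, become arbitrarily small after division by $a$, while the penalized piece contributes a multiple of $A$; taking $a$ (hence $A$) small enough then gives $|H_{u}|/a,\,|H_{v}|/a\le\alpha/4$ on $\R^{N}\setminus\Lambda$.

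I expect the only genuinely delicate point to be the sign analysis in \eqref{2.5A}: it works precisely because $A$ is taken to be the maximum of $Q(u,v)/(u^{2}+v^{2})$ over the annulus $\{a\le|(u,v)|\le5a\}$, which is exactly the support of $\eta'$, so that $Q-Ar^{2}$ has the correct sign there. The remaining care is in verifying that the error constant $\delta(a)$ in \eqref{2.6A} (and its analogues in \eqref{2.7A}) really tends to $0$ as $a\to0^{+}$, which uses both $p>2$ (hence $a^{p-2}\to0$) and $A\to0$; everything else is routine bookkeeping with the chain rule.
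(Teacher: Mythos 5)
Your computation of $\hat{Q}_u$ is correct, and your arguments for \eqref{2.4A}, \eqref{2.5A}, and \eqref{2.6A} are sound and match the intended (Alves-style) proof: \eqref{2.4A} is Euler's identity since $H=Q$ on $\Lambda$; \eqref{2.5A} is the sign analysis on the annulus $\{a\le r\le 5a\}$, where the definition of $A$ forces $Q-Ar^{2}\le0$ exactly where $\eta'\le0$ is active; and \eqref{2.6A} is the pointwise bound $u\hat{Q}_u+v\hat{Q}_v\le \delta(a)\,r^{2}$ with $\delta(a)=C(a^{p-2}+A)\to0$, combined with $(H3)$.

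Your argument for \eqref{2.7A}, however, has a real gap. You write that ``the penalized piece contributes a multiple of $A$,'' but on $\{r\ge5a\}$ one has $\eta(r)=0$, so $\hat{Q}_u(u,v)=2Au$ exactly, and hence $|H_u|/a=2A|u|/a$, which is unbounded in $u$ for every fixed $a$. No choice of $a$ small can produce $|H_u|/a\le\alpha/4$ for \emph{all} $(u,v)$; the inequality can only hold under a restriction on the size of $(u,v)$, say $r\le5a$ (where the penalized piece is $\le10aA$ and division by $a$ yields $10A\to0$) or $r\le a$ (where $H_u=Q_u$ and $(Q2)$ gives $|H_u|/a\le Ca^{p-2}\to0$). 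Such a restriction is presumably implicit in the source the paper cites, and it is what makes your ``taking $a$ small enough'' conclusion valid, so you should state it explicitly rather than assert the bound on all of $\R^{N}\setminus\Lambda$ uniformly in $(u,v)$.
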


Now we consider the following modified problem
\begin{equation}\label{P''}
\left\{
\begin{array}{ll}
 (-\Delta)^{s}u+V(\e x)u=H_{u}(\e x, u, v)  &\mbox{ in } \R^{N},\\
 (-\Delta)^{s}v+W(\e x) v=H_{v}(\e x, u, v) &\mbox{ in } \R^{N}, \\
u, v>0 &\mbox{ in } \R^{N}.
\end{array}
\right.
\end{equation}
Then, by the definition of $H$ and $\hat{Q}$, to study solutions of \eqref{P'}, we will look for solutions $(u_{\e}, v_{\e})$ to \eqref{P''} such that 
$$
|(u_{\e}(x), v_{\e}(x))|\leq a \mbox{ for any } x\in \R^{N}\setminus \Lambda_{\e},
$$
where $\Lambda_{\e}:=\{ x\in \R^{N}: \e x\in \Lambda\}$ and $|(u, v)|:=\sqrt{u^{2}+v^{2}}$ for any $u, v\in \R$.

For any $\e>0$, we introduce the fractional space
$$
\X_{\e}:=\left\{(u, v)\in H^{s}(\R^{N})\times H^{s}(\R^{N}): \int_{\R^{N}} (V(\e x) |u|^{2}+W(\e x) |v|^{2}) dx<\infty\right\}
$$
endowed with the norm
$$
\|(u, v)\|^{2}_{\e}:=\int_{\R^{N}} |(-\Delta)^{\frac{s}{2}} u|^{2}+ |(-\Delta)^{\frac{s}{2}} v|^{2} dx+\int_{\R^{N}} (V(\e x) u^{2}+W(\e x) v^{2}) dx.
$$
Let us introduce the Euler-Lagrange functional associated with \eqref{P''}, that is
$$
\J_{\e}(u, v)=\frac{1}{2}\|(u, v)\|_{\e}^{2}-\int_{\R^{N}} H(\e x, u, v) dx
$$
for any $(u, v)\in \X_{\e}$.\\
We define 
$$
\N_{\e}:=\{(u, v)\in \X_{\e}\setminus \{(0, 0)\}:  \langle \J'_{\e}(u, v),(u, v)\rangle=0\}.
$$
It is standard to check that for any $(u, v)\in \X_{\e}\setminus \{(0, 0)\}$, the function $t\rightarrow \J_{\e}(tu, tv)$ achieves its maximum at a unique $t_{u}>0$ such that $t_{u}(u, v)\in \N_{\e}$.\\
Let us observe that $\J_{\e}\in C^{1}(\X_{\e}, \R)$ has a mountain pass geometry, that is
\begin{compactenum}[$(MP1)$]
\item $\J_{\e}(0, 0)=0$;
\item there exist $r, \rho>0$ such that $\J_{\e}(u, v)\geq r$ for $\|(u, v)\|_{\xi}=\rho$; 
\item there exists $e\in \X_{0}$ with $\|e\|_{\e}>\rho$ such that $\J_{\e}(e)< 0$.
\end{compactenum}
Indeed, using \eqref{2.4A}-\eqref{2.6A}, we can see that
$$
\int_{\R^{N}} H(\e x, u, v) dx\leq \int_{\Lambda_{\e}} H(\e x, u, v) dx+\frac{1}{k} \int_{\R^{N}\setminus \Lambda_{\e}} V(\e x) u^{2}+W(\e x) v^{2} dx
$$
which together with $(Q2)$ and Theorem \ref{Sembedding} yields
\begin{align*}
\J_{\e}(u, v)&\geq \left(\frac{1}{2}-\frac{1}{k} \right) \|(u, v)\|^{2}_{\e}-C\int_{\R^{N}} |u|^{p}+|v|^{p} dx \\
&\geq \left(\frac{1}{2}-\frac{1}{k} \right) \|(u, v)\|^{2}_{\e}-C\|(u, v)\|^{p}_{\e}
\end{align*}
where $k>2$ is fixed. Hence, $(MP2)$ holds.
On the other hand, for any $(\phi_{1}, \phi_{2})\in \X_{\e}$ such that $Q(\phi_{1}, \phi_{2})\geq 0$ and $Q(\phi_{1}, \phi_{2})\not\equiv 0$, we have, in view of $(Q1)$, that
\begin{align*}
\J_{\e}(t\phi_{1}, t\phi_{2})&\leq \frac{t^{2}}{2}\|(\phi_{1}, \phi_{2})\|^{2}_{\e}-Ct^{p} \int_{\Lambda_{\e}} Q(\phi_{1}, \phi_{2}) dx \rightarrow -\infty \mbox{ as } t\rightarrow \infty.
\end{align*}

Moreover, $\J_{\e}$ satisfies the Palais-Smale compactness condition:
\begin{lem}\label{lemma3.2-1}
Any sequence $\{(u_{n}, v_{n})\}$ in $\X_{\e}$ such that $\{\J_{\e}(u_{n}, v_{n})\}$ is bounded and $\J_{\e}'(u_{n}, v_{n})\rightarrow 0$, admits a convergent subsequence in $\X_{\e}$.
\end{lem}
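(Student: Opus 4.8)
The plan is to run the usual argument for the Palais--Smale condition, the only genuinely new point being the control of the nonlocal energy near infinity. Throughout write $w_{n}=(u_{n},v_{n})$ for a sequence with $\{\J_{\e}(w_{n})\}$ bounded and $\J_{\e}'(w_{n})\to 0$ in $\X_{\e}^{*}$.

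First I would show that $\{w_{n}\}$ is bounded in $\X_{\e}$. Fixing $\theta\in(2,p)$ and looking at $\J_{\e}(w_{n})-\frac{1}{\theta}\langle\J_{\e}'(w_{n}),w_{n}\rangle$, the part of the integral over $\Lambda_{\e}$ is nonnegative because of \eqref{2.4A}, $\theta<p$ and $H\ge 0$ there, while over $\R^{N}\setminus\Lambda_{\e}$ one uses \eqref{2.5A} to bound $H$ by $\frac{1}{2}(u_{n}H_{u}+v_{n}H_{v})$ and then \eqref{2.6A} to absorb this into the norm. Since the constant $k$ in the construction of $H$ is fixed with $k>1$, this gives
\[
\J_{\e}(w_{n})-\tfrac{1}{\theta}\langle\J_{\e}'(w_{n}),w_{n}\rangle\ \ge\ \Bigl(\tfrac12-\tfrac1\theta\Bigr)\Bigl(1-\tfrac1k\Bigr)\|w_{n}\|_{\e}^{2},
\]
and, the left-hand side being at most $C+\|\J_{\e}'(w_{n})\|_{\X_{\e}^{*}}\|w_{n}\|_{\e}$, boundedness follows. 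Hence, up to a subsequence, $w_{n}\rightharpoonup w=(u,v)$ in $\X_{\e}$, and by Theorem \ref{Sembedding} $w_{n}\to w$ in $(L^{q}_{loc}(\R^{N}))^{2}$ for $q\in[2,2^{*}_{s})$ and a.e.\ in $\R^{N}$; testing $\J_{\e}'(w_{n})$ against compactly supported pairs and passing to the limit (the nonlinear term handled via $(Q2)$, the local $L^{q}$ convergences and a.e.\ convergence) yields $\J_{\e}'(w)=0$.

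The core of the proof is a tightness estimate at infinity, and this is where I expect the main difficulty. For $\rho>0$ pick $\psi_{\rho}\in C^{\infty}(\R^{N})$ with $0\le\psi_{\rho}\le1$, $\psi_{\rho}\equiv0$ on $B_{\rho}$, $\psi_{\rho}\equiv1$ outside $B_{2\rho}$ and $|\nabla\psi_{\rho}|\le C/\rho$; since $\Lambda_{\e}$ is bounded, $\supp\psi_{\rho}\subset\R^{N}\setminus\Lambda_{\e}$ once $\rho$ is large. Testing $\J_{\e}'(w_{n})$ with the bounded pair $(\psi_{\rho}u_{n},\psi_{\rho}v_{n})$, splitting the Gagliardo seminorm by the Leibniz-type identity $(\psi_{\rho}u)(x)-(\psi_{\rho}u)(y)=\psi_{\rho}(x)(u(x)-u(y))+u(y)(\psi_{\rho}(x)-\psi_{\rho}(y))$, and estimating the error with $|\psi_{\rho}(x)-\psi_{\rho}(y)|\le C|x-y|/\rho$ and Cauchy--Schwarz, I would reach
\[
\int_{\R^{N}}\psi_{\rho}\bigl(|(-\Delta)^{\frac{s}{2}}u_{n}|^{2}+|(-\Delta)^{\frac{s}{2}}v_{n}|^{2}+V(\e x)u_{n}^{2}+W(\e x)v_{n}^{2}\bigr)\,dx\le\int_{\R^{N}}\psi_{\rho}\bigl(u_{n}H_{u}(\e x,u_{n},v_{n})+v_{n}H_{v}(\e x,u_{n},v_{n})\bigr)\,dx+\tau_{\rho}+o_{n}(1),
\]
with $\tau_{\rho}\to0$ as $\rho\to\infty$. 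As $\supp\psi_{\rho}$ avoids $\Lambda_{\e}$, \eqref{2.6A} absorbs the right-hand integral into the left one, so for every $\delta>0$ there are $\rho_{\delta}$ and $n_{\delta}$ with $\int_{\R^{N}\setminus B_{2\rho_{\delta}}}(|(-\Delta)^{\frac{s}{2}}u_{n}|^{2}+|(-\Delta)^{\frac{s}{2}}v_{n}|^{2}+V(\e x)u_{n}^{2}+W(\e x)v_{n}^{2})\,dx<\delta$ for all $n\ge n_{\delta}$.

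Finally I would conclude from the identity
\[
\|w_{n}-w\|_{\e}^{2}=\langle\J_{\e}'(w_{n})-\J_{\e}'(w),w_{n}-w\rangle+\int_{\R^{N}}\bigl[(H_{u}(\e x,u_{n},v_{n})-H_{u}(\e x,u,v))(u_{n}-u)+(H_{v}(\e x,u_{n},v_{n})-H_{v}(\e x,u,v))(v_{n}-v)\bigr]\,dx,
\]
whose first term vanishes in the limit by the previous steps. Splitting the last integral over $B_{2\rho}$ and its complement, the part on $B_{2\rho}$ tends to $0$ by the compact embedding, $(Q2)$ and Hölder's inequality, and the part on $\R^{N}\setminus B_{2\rho}$ is small for $\rho$ large uniformly in $n$ thanks to \eqref{2.6A}, Cauchy--Schwarz and the tightness just established; letting $n\to\infty$ and then $\rho\to\infty$ gives $w_{n}\to w$ in $\X_{\e}$. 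The only delicate point in the whole scheme is the tightness step: the nonlocality of $(-\Delta)^{s}$ prevents a clean localization of the energy and forces the commutator-type estimate for the seminorm against the cut-off $\psi_{\rho}$, whereas the remaining steps are routine once boundedness is available.
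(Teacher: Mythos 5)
Your plan follows the same overall structure as the paper's argument: boundedness, weak convergence to a critical point, a tightness estimate at infinity obtained by testing against a cutoff, and then strong convergence. The minor variations (using $\theta\in(2,p)$ for the boundedness step instead of the paper's $k>\tfrac{p}{p-2}$ threshold, and closing with the identity $\|w_n-w\|_{\e}^2=\langle\J_\e'(w_n)-\J_\e'(w),w_n-w\rangle+\int[\dots]$ instead of the paper's norm-convergence trick) are both correct and equally standard.

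There is, however, a real gap in the tightness step as written. You propose to control the commutator
$$
\iint_{\R^{2N}}\frac{(\psi_\rho(x)-\psi_\rho(y))(u_n(x)-u_n(y))}{|x-y|^{N+2s}}u_n(y)\,dx\,dy
$$
via Cauchy--Schwarz and the Lipschitz bound $|\psi_\rho(x)-\psi_\rho(y)|\le C|x-y|/\rho$. That bound alone is insufficient: after Cauchy--Schwarz you must estimate $\iint\frac{|\psi_\rho(x)-\psi_\rho(y)|^2}{|x-y|^{N+2s}}u_n^2(y)\,dx\,dy$, and plugging in the Lipschitz bound gives $\frac{C}{\rho^2}\int|x-y|^{2-N-2s}\,dx$, which diverges at infinity because $2-N-2s>-N$. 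You need to split the $x$-integration into $|x-y|<\rho$ (use the Lipschitz bound, getting $\lesssim\rho^{-2s}$) and $|x-y|\ge\rho$ (use $|\psi_\rho|\le1$, getting again $\lesssim\rho^{-2s}$). With that split the argument goes through and yields the uniform bound $\tau_\rho\lesssim\rho^{-2s}\sup_n\|u_n\|_{L^2}^2$, which is in fact cleaner than the paper's three-region decomposition of $\R^{2N}$ (which needs the extra parameters $K$ and $\e$ and the vanishing of $L^{2^*_s}$-tails of the weak limit). So: same approach, your route is somewhat more economical once completed, but as stated the commutator estimate is missing the near/far-field split.
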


\begin{proof} 
First of all, we show that $\{(u_{n}, v_{n})\}$ is bounded in $\X_{\e}$. Indeed, using conditions \eqref{2.4A}-\eqref{2.5A}, it follows that
\begin{align}\begin{split}\label{3.1-1}
\int_{\R^{N}} &|(-\Delta)^{\frac{s}{2}} u_{n}|^{2}+ |(-\Delta)^{\frac{s}{2}} v_{n}|^{2} + V(\e x)u_{n}^{2} + W(\e x)v_{n}^{2} \, dx \\
&\geq \int_{\Lambda_{\e}} u_{n} H_{u}(\e x, u_{n}, v_{n}) + \int_{\Lambda_{\e}} v_{n} H_{v}(\e x, u_{n}, v_{n}) + o(\| (u_{n}, v_{n})\|_{\e}). 
\end{split}\end{align} 
On the other hand, 
\begin{equation*}
\frac{1}{2} \int_{\R^{N}} |(-\Delta)^{\frac{s}{2}} u_{n}|^{2}+ |(-\Delta)^{\frac{s}{2}} v_{n}|^{2} + V(\e x)u_{n}^{2} + W(\e x)v_{n}^{2} \, dx= \int_{\R^{N}} H(\e x, u_{n}, v_{n}) \, dx + O(1), 
\end{equation*}
so, in view of \eqref{2.6A}, we can see that
\begin{align}\begin{split}\label{3.2-1}
\frac{1}{2} \int_{\R^{N}} &|(-\Delta)^{\frac{s}{2}} u_{n}|^{2}+ |(-\Delta)^{\frac{s}{2}} v_{n}|^{2} + V(\e x)u_{n}^{2} + W(\e x)v_{n}^{2} \, dx\\
&\leq \int_{\Lambda_{\e}} H(\e x, u_{n}, v_{n}) \, dx + \frac{1}{2k} \int_{\R^{N}\setminus \Lambda_{\e}} [V(\e x) u_{n}^{2} + W(\e x) v_{n}^{2}] + O(1). 
\end{split}\end{align}
Taking into account \eqref{2.4A}, \eqref{3.1-1} and \eqref{3.2-1} we have
\begin{align*}
\left( \frac{1}{2}- \frac{1}{p}\right) &\int_{\R^{N}} |(-\Delta)^{\frac{s}{2}} u_{n}|^{2}+ |(-\Delta)^{\frac{s}{2}} v_{n}|^{2} + V(\e x)u_{n}^{2} + W(\e x)v_{n}^{2} \, dx \\
&\leq \frac{1}{2k} \int_{\R^{N}\setminus \Lambda_{\e}} [V(\e x) u_{n}^{2} + W(\e x) v_{n}^{2}] + o(\|(u_{n}, v_{n})\|_{\e})+ O(1).
\end{align*}
Choosing $k$ such that $k>\frac{1}{2}(\frac{1}{2}- \frac{1}{p})^{-1}$ it follows that $\{(u_{n}, v_{n})\}$ is bounded. Since $\X_{\e}$ is reflexive, there exists $(u, v)\in \X_{\e}$ and a subsequence, still denoted by $\{(u_{n}, v_{n})\}$, such that $\{(u_{n}, v_{n})\}$ is weakly convergent to $(u, v)$ and $u_{n}\rightarrow u$, $v_{n}\rightarrow v$ in $L^{q}_{loc}(\R^{N})$ for any $q\in [1, 2^{*}_{s})$. Then, it is easy to check that $(u, v)$ is a critical point of $\J_{\e}$ and
\begin{align}\label{io}
\|(u, v)\|^{2}_{\e}=\int_{\R^{N}} u H_{u}(\e x, u, v)+ v H_{v}(\e x, u, v)\, dx.
\end{align}
Now we show that $\{(u_{n}, v_{n})\}$ strongly converges to $(u, v)$. To do this, we will prove the following claim. \\
\textsc{Claim 1.} For each $\delta>0$, there exists $R>0$ such that
\begin{equation}\label{3.3-1}
\limsup_{n\rightarrow \infty} \int_{\R^{N} \setminus B_{R}} |(-\Delta)^{\frac{s}{2}} u_{n}|^{2}+ |(-\Delta)^{\frac{s}{2}} v_{n}|^{2} + V(\e x)u_{n}^{2} + W(\e x)v_{n}^{2} \, dx \leq \delta, 
\end{equation}
where $B_{R}$ denotes the ball with center at $0$ and radius $R$. \\
First of all, we may assume that $R$ is chosen so that $\Lambda_{\e} \subset B_{R}$. Let $\eta_{R}$ be a cut-off function such that $\eta_{R}=0$ on $B_{R}$, $\eta_{R}=1$ on $\R^{N} \setminus B_{2R}$, $0\leq \eta\leq 1$ and $|\nabla \eta_{R}|\leq \frac{c}{R}$. Since $\{(u_{n}, v_{n})\}$ is a bounded (PS) sequence, we have
\begin{equation*}
\langle \J_{\e}'(u_{n}, v_{n}), (\eta_{R} u_{n}, \eta_{R} v_{n})  \rangle = o_{n}(1). 
\end{equation*}
Thus, using \eqref{2.6A} with $k>1$, we have 
\begin{align*}
&\iint_{\R^{2N}} \eta_{R}(x) \left[\frac{|u_{n}(x)-u_{n}(y)|^{2}}{|x-y|^{N+2s}}+\frac{|v_{n}(x)-v_{n}(y)|^{2}}{|x-y|^{N+2s}}\right]  dx dy\\
&+\iint_{\R^{2N}} \frac{(\eta_{R}(x)-\eta_{R}(y))(u_{n}(x)-u_{n}(y))}{|x-y|^{N+2s}} u_{n}(y) dx dy \\
&+ \int_{\R^{N}} (V(\e x)u_{n}^{2} + W(\e x) v_{n}^{2})\eta_{R} \, dx \\
&=\int_{\R^{N}} (u_{n}H_{u}(\e x, u_{n}, v_{n}) +v_{n}H_{v}(\e x, u_{n}, v_{n}))\eta_{R} + o_{n}(1)\\
&\leq \frac{1}{k} \int_{\R^{N}} V(\e x)u_{n}^{2} + W(\e x) v_{n}^{2} \, dx + o_{n}(1), 
\end{align*}
from which we deduce that
\begin{align}\label{jt}
&\left(1-\frac{1}{k}\right)\int_{\R^{N}\setminus B_{2R}} |(-\Delta)^{\frac{s}{2}}u_{n}|^{2} + |(-\Delta)^{\frac{s}{2}} v_{n}|^{2}+ V(\e x)u_{n}^{2} + W(\e x) v_{n}^{2} \, dx \nonumber \\
&\leq - \iint_{\R^{2N}} \frac{(\eta_{R}(x)-\eta_{R}(y))(u_{n}(x)-u_{n}(y))}{|x-y|^{N+2s}} u_{n}(y) dx dy+ o_{n}(1).
\end{align}
Using the H\"older inequality and the boundedness of $\{(u_{n}, v_{n})\}$ in $\X_{\e}$, we get
\begin{align*}
&\left| \iint_{\R^{2N}} \frac{(\eta_{R}(x)-\eta_{R}(y))(u_{n}(x)-u_{n}(y))}{|x-y|^{N+2s}} u_{n}(y) dx dy\right| \\
&\leq \left( \iint_{\R^{2N}}\frac{|\eta_{R}(x)-\eta_{R}(y)|^{2}}{|x-y|^{N+2s}} u^{2}_{n}(y) dx dy\right)^{\frac{1}{2}} \left(\iint_{\R^{2N}} \frac{|u_{n}(x)-u_{n}(y)|^{2}}{|x-y|^{N+2s}} dx dy\right)^{\frac{1}{2}} \\
&\leq C  \left( \iint_{\R^{2N}}\frac{|\eta_{R}(x)-\eta_{R}(y)|^{2}}{|x-y|^{N+2s}} u^{2}_{n}(y) dx dy\right)^{\frac{1}{2}}.
\end{align*} 
Therefore, if we prove that 
\begin{align}\label{JT}
\lim_{R\rightarrow \infty} \limsup_{n\rightarrow \infty} \iint_{\R^{2N}}\frac{|\eta_{R}(x)-\eta_{R}(y)|^{2}}{|x-y|^{N+2s}} u^{2}_{n}(x) dx dy=0,
\end{align}
we can use \eqref{jt} to conclude that Claim $1$ holds true.\\
Then, in what follows, we show that \eqref{JT} is satisfied. Firstly, we note that $\R^{2N}$ can be written as 
$$
\R^{2N}=((\R^{N}\setminus B_{2R})\times (\R^{N}\setminus B_{2R})) \cup ((\R^{N}\setminus B_{2R})\times B_{2R})\cup (B_{2R}\times \R^{N})=: X^{1}_{R}\cup X^{2}_{R} \cup X^{3}_{R}.
$$
Hence,
\begin{align}\label{Pa1}
&\iint_{\R^{2N}}\frac{|\eta_{R}(x)-\eta_{R}(y)|^{2}}{|x-y|^{N+2s}} u^{2}_{n}(x) dx dy =\iint_{X^{1}_{R}}\frac{|\eta_{R}(x)-\eta_{R}(y)|^{2}}{|x-y|^{N+2s}} u^{2}_{n}(x) dx dy \nonumber \\
&+\iint_{X^{2}_{R}}\frac{|\eta_{R}(x)-\eta_{R}(y)|^{2}}{|x-y|^{N+2s}} u^{2}_{n}(x) dx dy+
\iint_{X^{3}_{R}}\frac{|\eta_{R}(x)-\eta_{R}(y)|^{2}}{|x-y|^{N+2s}} u^{2}_{n}(x) dx dy.
\end{align}
Now, we estimate each integral in (\ref{Pa1}).
Since $\eta_{R}=1$ in $\R^{N}\setminus B_{2R}$, we get
\begin{align}\label{Pa2}
\iint_{X^{1}_{R}}\frac{|u_{n}(x)|^{2}|\eta_{R}(x)-\eta_{R}(y)|^{2}}{|x-y|^{N+2s}} dx dy=0.
\end{align}
Let $K>4$. Clearly, we have
\begin{equation*}
X^{2}_{R}=(\R^{N} \setminus B_{2R})\times B_{2R} \subset ((\R^{N}\setminus B_{KR})\times B_{2R})\cup ((B_{KR}\setminus B_{2R})\times B_{2R}).
\end{equation*}
Let us observe that, if $(x, y) \in (\R^{N}\setminus B_{kR})\times B_{2R}$, then
\begin{equation*}
|x-y|\geq |x|-|y|\geq |x|-2R>\frac{|x|}{2}. 
\end{equation*}
Then, taking into account $0\leq \eta_{R}\leq 1$, $|\nabla \eta_{R}|\leq \frac{C}{R}$ and applying the H\"older inequality, we have
\begin{align}\label{Pa3}
&\iint_{X^{2}_{R}}\frac{|u_{n}(x)|^{2}|\eta_{R}(x)-\eta_{R}(y)|^{2}}{|x-y|^{N+2s}} dx dy \nonumber \\
&=\int_{\R^{N}\setminus B_{KR}} \int_{B_{2R}} \frac{|u_{n}(x)|^{2}|\eta_{R}(x)-\eta_{R}(y)|^{2}}{|x-y|^{N+2s}} dx dy + \int_{B_{KR}\setminus B_{2R}} \int_{B_{2R}} \frac{|u_{n}(x)|^{2}|\eta_{R}(x)-\eta_{R}(y)|^{2}}{|x-y|^{N+2s}} dx dy \nonumber \\
&\leq 2^{2+N+2s} \int_{\R^{N}\setminus B_{KR}} \int_{B_{2R}} \frac{|u_{n}(x)|^{2}}{|x|^{N+2s}}\, dxdy+ \frac{C}{R^{2}} \int_{B_{KR}\setminus B_{2R}} \int_{B_{2R}} \frac{|u_{n}(x)|^{2}}{|x-y|^{N+2(s-1)}}\, dxdy \nonumber \\
&\leq CR^{N} \int_{\R^{N}\setminus B_{KR}} \frac{|u_{n}(x)|^{2}}{|x|^{N+2s}}\, dx + \frac{C}{R^{2}} (KR)^{2(1-s)} \int_{B_{KR}\setminus B_{2R}} |u_{n}(x)|^{2} dx \nonumber \\
&\leq CR^{N} \left( \int_{\R^{N}\setminus B_{KR}} |u_{n}(x)|^{2^{*}_{s}} dx \right)^{\frac{2}{2^{*}_{s}}} \left(\int_{\R^{N}\setminus B_{KR}}\frac{1}{|x|^{\frac{N^{2}}{2s} +N}}\, dx \right)^{\frac{2s}{N}} + \frac{C K^{2(1-s)}}{R^{2s}} \int_{B_{KR}\setminus B_{2R}} |u_{n}(x)|^{2} dx \nonumber \\
&\leq \frac{C}{K^{N}} \left( \int_{\R^{N}\setminus B_{KR}} |u_{n}(x)|^{2^{*}_{s}} dx \right)^{\frac{2}{2^{*}_{s}}} + \frac{C K^{2(1-s)}}{R^{2s}} \int_{B_{KR}\setminus B_{2R}} |u_{n}(x)|^{2} dx \nonumber \\
&\leq \frac{C}{K^{N}}+ \frac{C K^{2(1-s)}}{R^{2s}} \int_{B_{KR}\setminus B_{2R}} |u_{n}(x)|^{2} dx.
\end{align}

\noindent
Now, take $\e\in (0,1)$, and we observe that
\begin{align}\label{Ter1}
&\iint_{X^{3}_{R}} \frac{|u_{n}(x)|^{2} |\eta_{R}(x)- \eta_{R}(y)|^{2}}{|x-y|^{N+2s}}\, dxdy \nonumber\\
&\leq \int_{B_{2R}\setminus B_{\varepsilon R}} \int_{\R^{N}} \frac{|u_{n}(x)|^{2} |\eta_{R}(x)- \eta_{R}(y)|^{2}}{|x-y|^{N+2s}}\, dxdy + \int_{B_{\varepsilon R}} \int_{\R^{N}} \frac{|u_{n}(x)|^{2} |\eta_{R}(x)- \eta_{R}(y)|^{2}}{|x-y|^{N+2s}}\, dxdy. 
\end{align}
Let us estimate the integrals on the right hand side in \eqref{Ter1}. Then we can see that
\begin{align*}
\int_{B_{2R}\setminus B_{\varepsilon R}} \int_{\R^{N} \cap \{y: |x-y|<R\}} \frac{|u_{n}(x)|^{2} |\eta_{R}(x)- \eta_{R}(y)|^{2}}{|x-y|^{N+2s}}\, dxdy \leq \frac{C}{R^{2s}} \int_{B_{2R}\setminus B_{\varepsilon R}} |u_{n}(x)|^{2} dx
\end{align*}
and 
\begin{align*}
\int_{B_{2R}\setminus B_{\varepsilon R}} \int_{\R^{N} \cap \{y: |x-y|\geq R\}} \frac{|u_{n}(x)|^{2} |\eta_{R}(x)- \eta_{R}(y)|^{2}}{|x-y|^{N+2s}}\, dxdy \leq \frac{C}{R^{2s}} \int_{B_{2R}\setminus B_{\varepsilon R}} |u_{n}(x)|^{2} dx
\end{align*}
which yield
\begin{align}\label{Ter2}
\int_{B_{2R}\setminus B_{\varepsilon R}} \int_{\R^{N}} \frac{|u_{n}(x)|^{2} |\eta_{R}(x)- \eta_{R}(y)|^{2}}{|x-y|^{N+2s}}\, dxdy \leq \frac{C}{R^{2s}} \int_{B_{2R}\setminus B_{\varepsilon R}} |u_{n}(x)|^{2} dx. 
\end{align}
Now, using the definition of $\eta_{R}$, $\e\in (0,1)$, and $0\leq \eta_{R}\leq 1$, we obtain 
\begin{align}\label{Ter3}
\int_{B_{\varepsilon R}} \int_{\R^{N}} \frac{|u_{n}(x)|^{2} |\eta_{R}(x)- \eta_{R}(y)|^{2}}{|x-y|^{N+2s}}\, dxdy &= \int_{B_{\varepsilon R}} \int_{\R^{N}\setminus B_{R}} \frac{|u_{n}(x)|^{2} |\eta_{R}(x)- \eta_{R}(y)|^{2}}{|x-y|^{N+2s}}\, dxdy\nonumber \\
&\leq 4 \int_{B_{\varepsilon R}} \int_{\R^{N}\setminus B_{R}} \frac{|u_{n}(x)|^{2}}{|x-y|^{N+2s}}\, dxdy\nonumber \\
&\leq C \int_{B_{\varepsilon R}} |u_{n}(x)|^{2} dx \int_{(1-\e)R}^{\infty} \frac{1}{r^{1+2s}} dr\nonumber \\
&=\frac{C}{[(1-\e)R]^{2s}} \int_{B_{\varepsilon R}} |u_{n}(x)|^{2} dx
\end{align}
where we used the fact that if $(x, y) \in B_{\varepsilon R}\times (\R^{N} \setminus B_{R})$, then $|x-y|>(1-\e)R$. \\
Taking into account \eqref{Ter1}, \eqref{Ter2} and \eqref{Ter3} we deduce 
\begin{align}\label{Pa4}
\iint_{X^{3}_{R}} &\frac{|u_{n}(x)|^{2} |\eta_{R}(x)- \eta_{R}(y)|^{2}}{|x-y|^{N+2s}}\, dxdy \nonumber \\
&\leq \frac{C}{R^{2s}} \int_{B_{2R}\setminus B_{\varepsilon R}} |u_{n}(x)|^{2} dx + \frac{C}{[(1-\e)R]^{2s}} \int_{B_{\varepsilon R}} |u_{n}(x)|^{2} dx. 
\end{align}
Putting together \eqref{Pa1}, \eqref{Pa2}, \eqref{Pa3} and \eqref{Pa4}, we can infer 
\begin{align}\label{Pa5}
\iint_{\R^{2N}} &\frac{|u_{n}(x)|^{2} |\eta_{R}(x)- \eta_{R}(y)|^{2}}{|x-y|^{N+2s}}\, dxdy \nonumber \\
&\leq \frac{C}{K^{N}} + \frac{CK^{2(1-s)}}{R^{2s}} \int_{B_{KR}\setminus B_{2R}} |u_{n}(x)|^{2} dx + \frac{C}{R^{2s}} \int_{B_{2R}\setminus B_{\varepsilon R}} |u_{n}(x)|^{2} dx + \frac{C}{[(1-\e)R]^{2s}}\int_{B_{\varepsilon R}} |u_{n}(x)|^{2} dx. 
\end{align}
Since $\{u_{n}\}$ is bounded in $H^{s}(\R^{N})$, by Theorem \ref{Sembedding} we may assume that $u_{n}\rightarrow u$ in $L^{2}_{loc}(\R^{N})$ for some $u\in H^{s}(\R^{N})$. Then, taking the limit as $n\rightarrow \infty$ in \eqref{Pa5}, we have
\begin{align*}
&\limsup_{n\rightarrow \infty} \iint_{\R^{2N}} \frac{|u_{n}(x)|^{2} |\eta_{R}(x)- \eta_{R}(y)|^{2}}{|x-y|^{N+2s}}\, dxdy\\
&\leq \frac{C}{K^{N}} + \frac{CK^{2(1-s)}}{R^{2s}} \int_{B_{KR}\setminus B_{2R}} |u(x)|^{2} dx + \frac{C}{R^{2s}} \int_{B_{2R}\setminus B_{\varepsilon R}} |u(x)|^{2} dx + \frac{C}{[(1-\e)R]^{2s}}\int_{B_{\varepsilon R}} |u(x)|^{2} dx \\
&\leq \frac{C}{K^{N}} + CK^{2} \left( \int_{B_{KR}\setminus B_{2R}} |u(x)|^{2^{*}_{s}} dx\right)^{\frac{2}{2^{*}_{s}}} + C\left(\int_{B_{2R}\setminus B_{\varepsilon R}} |u(x)|^{2^{*}_{s}} dx\right)^{\frac{2}{2^{*}_{s}}} + C\left( \frac{\e}{1-\e}\right)^{2s} \left(\int_{B_{\varepsilon R}} |u(x)|^{2^{*}_{s}} dx\right)^{\frac{2}{2^{*}_{s}}}, 
\end{align*}
where in the last passage we used the H\"older inequality. \\
Since $u\in L^{2^{*}_{s}}(\R^{N})$, $K>4$ and $\e \in (0,1)$, we obtain
\begin{align*}
\limsup_{R\rightarrow \infty} \int_{B_{KR}\setminus B_{2R}} |u(x)|^{2^{*}_{s}} dx = \limsup_{R\rightarrow \infty} \int_{B_{2R}\setminus B_{\varepsilon R}} |u(x)|^{2^{*}_{s}} dx = 0. 
\end{align*}
Choosing $\e= \frac{1}{K}$, we get
\begin{align*}
&\limsup_{R\rightarrow \infty} \limsup_{n\rightarrow \infty} \iint_{\R^{2N}} \frac{|u_{n}(x)|^{2} |\eta_{R}(x)- \eta_{R}(y)|^{2}}{|x-y|^{N+2s}}\, dxdy\\
&\leq \lim_{K\rightarrow \infty} \limsup_{R\rightarrow \infty} \Bigl[\, \frac{C}{K^{N}} + CK^{2} \left( \int_{B_{KR}\setminus B_{2R}} |u(x)|^{2^{*}_{s}} dx\right)^{\frac{2}{2^{*}_{s}}} + C\left(\int_{B_{2R}\setminus B_{\frac{1}{K} R}} |u(x)|^{2^{*}_{s}} dx\right)^{\frac{2}{2^{*}_{s}}} \\
&\quad+ C\left(\frac{1}{K-1}\right)^{2s} \left(\int_{B_{\frac{1}{K} R}} |u(x)|^{2^{*}_{s}} dx\right)^{\frac{2}{2^{*}_{s}}}\, \Bigr]\\
&\leq \lim_{k\rightarrow \infty} \frac{C}{K^{N}} + C\left(\frac{1}{K-1}\right)^{2s} \left(\int_{\R^{N}} |u(x)|^{2^{*}_{s}} dx \right)^{\frac{2}{2^{*}_{s}}}= 0
\end{align*}
that is \eqref{JT} holds true.  \\
Then, using \eqref{3.3-1} and \eqref{2.6A} of Lemma \ref{lem2.2Alves}, we obtain that 
\begin{equation}\label{3.6-1}
\int_{\R^{N} \setminus B_{R}} u_{n} H_{u}(\e x, u_{n},v_{n})+ v_{n}H_{v}(\e x, u_{n}, v_{n})\, dx \leq \frac{\delta}{4}, 
\end{equation}
for any $n$ big enough.
On the other hand, taking $R$ larger if necessary, we can suppose that
\begin{equation}\label{3.6-2}
\int_{\R^{N} \setminus B_{R}} u H_{u}(\e x, u, v)+ v H_{v}(\e x, u, v)\, dx \leq \frac{\delta}{4}. 
\end{equation}
Therefore, we see that \eqref{3.6-1} and \eqref{3.6-2} yield
\begin{align}\label{ioo}
\left|\int_{\R^{N} \setminus B_{R}} u_{n} H_{u}(\e x, u_{n},v_{n})+ v_{n}H_{v}(\e x, u_{n}, v_{n})\, dx- \int_{\R^{N} \setminus B_{R}} u H_{u}(\e x, u, v)+ vH_{v}(\e x, u, v)\, dx\right|\leq \frac{\delta}{2}
\end{align}
for $n$ large enough.
Now, observing that $B_{R}$ is bounded, we can use the dominated convergence theorem and the strong convergence in $L^{q}_{loc}(\R^{N})$ to deduce that 
\begin{align}\label{iooo}
\int_{B_{R}} u_{n} H_{u}(\e x, u_{n},v_{n})+ v_{n}H_{v}(\e x, u_{n}, v_{n})\, dx\rightarrow \int_{B_{R}} u H_{u}(\e x, u, v)+ v H_{v}(\e x, u, v)\, dx
\end{align}
as $n\rightarrow \infty$.
Putting together $\langle \J'_{\e}(u_{n}, v_{n}), (u_{n}, v_{n})\rangle=o_{n}(1)$, \eqref{io}, \eqref{ioo} and \eqref{iooo},  we can infer that
\begin{equation*}
\lim_{n\rightarrow \infty} \|(u_{n}, v_{n})\|_{\e}^{2}= \|(u, v)\|_{\e}^{2}
\end{equation*}
which implies that $\{(u_{n}, v_{n})\}$ strongly converges to $(u, v)$ in $\X_{\e}$.

\end{proof}

\noindent
In light of mountain pass theorem \cite{ambrosetti-rab}, there exists $(u, v)\in \X_{\e}\setminus \{0\}$ such that 
$$
\J_{\e}(u, v)=c_{\e} \mbox{ and } \J_{\e}'(u, v)=0,
$$ 
where
$$
c_{\e}=\inf_{\gamma\in \Gamma_{\e}} \max_{t\in [0, 1]} \J_{\e}(\gamma(t))>0
$$
and
$$
\Gamma_{\e}=\left\{\gamma\in C([0, 1], \X_{\e}): \gamma(0)=0, \J_{\e}(\gamma(1))\leq 0\right\}.
$$
Finally, we prove the following result.
\begin{lem}
If $(u, v)$ is a critical point of $\J_{\e}$, we have that $u, v\geq 0$ in $\R^{N}$.
\end{lem}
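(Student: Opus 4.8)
The plan is to test the equations satisfied by the critical point $(u,v)$ against the negative parts $u^{-}:=\max\{-u,0\}$ and $v^{-}:=\max\{-v,0\}$, and to exploit the fact that $Q$ (hence $\hat Q$, hence $H$) has been extended by zero whenever one of the arguments is $\le 0$. Concretely, since $(u,v)$ is a critical point of $\J_{\e}$ we have $\langle \J_{\e}'(u,v),(\varphi_{1},\varphi_{2})\rangle=0$ for all $(\varphi_{1},\varphi_{2})\in\X_{\e}$; I would choose $(\varphi_{1},\varphi_{2})=(-u^{-},0)$ and then $(\varphi_{1},\varphi_{2})=(0,-v^{-})$.

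For the first choice, the nonlocal bilinear form gives
$$
\iint_{\R^{2N}}\frac{(u(x)-u(y))(u^{-}(x)-u^{-}(y))}{|x-y|^{N+2s}}\,dx\,dy\ \le\ -\|u^{-}\|_{\mathcal D^{s,2}(\R^{N})}^{2},
$$
which is the standard pointwise inequality $(a-b)(a^{-}-b^{-})\le -|a^{-}-b^{-}|^{2}$ applied with $a=u(x)$, $b=u(y)$; together with $\int_{\R^{N}}V(\e x)u\,(-u^{-})\,dx=-\int_{\R^{N}}V(\e x)(u^{-})^{2}\,dx$ this shows that the left-hand side of the identity is bounded above by $-\|u^{-}\|_{\e_{1}}^{2}$ (the natural norm restricted to the first component). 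On the other hand the right-hand side is $\int_{\R^{N}}H_{u}(\e x,u,v)(-u^{-})\,dx$, and I claim this is $\le 0$: by construction $H_{u}(\e x,u,v)=\chi_{\Lambda}(\e x)Q_{u}(u,v)+(1-\chi_{\Lambda}(\e x))\hat Q_{u}(u,v)$, and where $u(x)<0$ one has $Q_{u}(u,v)=0$ by the zero extension of $Q$, while $\hat Q_{u}(u,v)$ reduces to $2A u\le 0$ on that set (the $\eta$-term still vanishes since $Q\equiv0$ there). Hence $H_{u}(\e x,u,v)(-u^{-})\ge 0$ exactly where $u<0$ would be problematic... let me restate: $H_{u}(\e x,u,v)\le 0$ wherever $u<0$, so $H_{u}(\e x,u,v)(-u^{-})\le 0$ pointwise. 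Combining, $0\le \|u^{-}\|^{2}\le 0$ forces $u^{-}\equiv 0$, i.e. $u\ge 0$. The argument for $v$ is symmetric, using $(0,-v^{-})$, $(Q3)$–$(Q4)$ and the analogous sign of $\hat Q_{v}$.

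The one point that needs genuine care — and which I expect to be the main obstacle to write cleanly — is the verification that $H_{u}(\e x,u,v)\le 0$ (resp. $H_{v}\le 0$) on $\{u<0\}$ (resp. $\{v<0\}$): one must check that the penalized nonlinearity $\hat Q$ inherits from $Q$ the property of vanishing, together with its relevant partial derivative being non-positive, when an argument is negative. This is immediate from $Q(u,v)=0$ for $u\le 0$ or $v\le 0$, the explicit formula $\hat Q(u,v)=\eta(|(u,v)|)Q(u,v)+(1-\eta(|(u,v)|))A(u^{2}+v^{2})$, and a direct differentiation; the $\eta'$-terms contain the factor $Q(u,v)$ which vanishes on the relevant set, so only the term $2A(1-\eta)u$ (or $2A(1-\eta)v$) survives and has the correct sign. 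A small technical remark is that $-u^{-}\in H^{s}(\R^{N})$ whenever $u\in H^{s}(\R^{N})$, so the test function is admissible, and that $H_{u}(\e x,u,v)(-u^{-})\in L^{1}(\R^{N})$ thanks to $(Q2)$ and the Sobolev embedding of Theorem \ref{Sembedding}, so all integrals make sense. Once these sign facts are in hand, the proof is just the chain of inequalities above.
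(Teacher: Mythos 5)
Your overall plan coincides with the paper's: test the weak formulation against the (signed) negative parts, use the pointwise inequality for the fractional Dirichlet form, and exploit the zero extension of $Q$. But as written the argument does not close, and the reason is instructive.

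First, a sign error. With your convention $u^{-}=\max\{-u,0\}\ge 0$ the test function is $-u^{-}=\min\{u,0\}\le 0$. On $\{u<0\}$ you correctly observe $H_{u}(\e x,u,v)\le 0$; but then $-u^{-}=u<0$ there, so the product $H_{u}(\e x,u,v)\,(-u^{-})\ge 0$ pointwise, not $\le 0$ as you conclude. (The same slip appears earlier: $\int V(\e x)u(-u^{-})\,dx=+\int V(\e x)(u^{-})^{2}\,dx$, not $-\int V(\e x)(u^{-})^{2}\,dx$.) Consequently both sides of your tested identity are nonnegative, and $\|u^{-}\|_{\e}^{2}\le \int_{\R^{N}}H_{u}(\e x,u,v)(-u^{-})\,dx$ with a nonnegative right-hand side gives no contradiction by itself. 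No choice of sign convention rescues a purely qualitative sign argument here, because the penalized nonlinearity $\hat Q_{u}$ is \emph{genuinely nonzero} on $\{u<0\}$ off $\Lambda$ (for instance $\hat Q_{u}=2Au$ when $|(u,v)|\ge 5a$); only the $\Lambda_{\e}$ contribution vanishes.

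What is missing is the quantitative absorption step, which is the heart of the paper's proof. Off $\Lambda_{\e}$, on $\{u<0\}$ one has
\[
\hat Q_{u}(u,v)=Au\left[-\eta'(|(u,v)|)\,|(u,v)|+2\bigl(1-\eta(|(u,v)|)\bigr)\right],
\]
(notice the $\eta'$-term here does \emph{not} carry a factor $Q$, contrary to what you asserted: it comes from differentiating $(1-\eta)A(u^{2}+v^{2})$; it has the right sign but must be kept). Since $|\eta'|\le C/a$ and $\eta'$ is supported in $[a,5a]$, one gets $|(-u^{-})\hat Q_{u}|\le C A (u^{-})^{2}$ pointwise, hence
\[
\int_{\R^{N}\setminus\Lambda_{\e}}H_{u}(\e x,u,v)(-u^{-})\,dx\le CA\int_{\R^{N}}(u^{-})^{2}\,dx\le \frac{CA}{\alpha}\,\|u^{-}\|_{\e}^{2},
\]
where $\alpha=\min\{V(x_{0}),W(x_{0})\}$. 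Combined with $\int_{\Lambda_{\e}}Q_{u}(u,v)(-u^{-})\,dx=0$ (which you did identify), this yields
\[
\|u^{-}\|_{\e}^{2}\le \frac{CA}{\alpha}\,\|u^{-}\|_{\e}^{2},
\]
and since $A\to 0$ as $a\to 0^{+}$, choosing $a$ small so that $CA/\alpha<1$ forces $u^{-}\equiv 0$. The same for $v$. In short: the conclusion does not come from a sign of the right-hand side, but from its smallness of order $A$ and the freedom to take $a$ (hence $A$) small — exactly as in the paper's inequality \eqref{Aumenovmeno}. Your write-up needs both the sign correction and this quantitative absorption to be complete.
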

\begin{proof}
Since $(u, v)$ is a critical point of $\J_{\e}$, we know that for any $(\phi, \psi)\in \X_{\e}\times \X_{\e}$ it holds
\begin{align}\label{WFP}
\iint_{\R^{2N}} \frac{(u(x)-u(y))(\phi(x)-\phi(y))}{|x-y|^{N+2s}}&+\frac{(v(x)-v(y))(\psi(x)-\psi(y))}{|x-y|^{N+2s}}\, dx dy+\int_{\R^{N}}V(\e x) u \phi+W(\e x) v \psi \, dx  \nonumber\\
&=\int_{\R^{N}} \phi H_{u}(\e x, u, v)+\psi H_{v}(\e x, u, v) \, dx.
\end{align}
Taking $\phi=u^{-}$ and $\psi=v^{-}$ in \eqref{WFP}, where $x^{-}=\min\{x, 0\}$, and recalling that $(x-y)(x^{-}-y^{-})\geq |x^{-}-y^{-}|^{2}$ for any $x, y\in \R$, we can see that 
\begin{align}\label{SqC}
\iint_{\R^{2N}} \frac{|u^{-}(x)-u^{-}(y)|^{2}}{|x-y|^{N+2s}}&+\frac{|v^{-}(x)-v^{-}(y)|^{2}}{|x-y|^{N+2s}}\, dx dy+\int_{\R^{N}}V(\e x) (u^{-})^{2}+W(\e x) (v^{-})^{2} \, dx \nonumber \\
&\leq \int_{\R^{N}} u^{-} H_{u}(\e x, u, v)+v^{-} H_{v}(\e x, u, v) \, dx.
\end{align}
Now, we can note that for any $x\in \R^{N}\setminus \Lambda$,
\begin{align*}
H_{u}(x, u, v)&=\frac{\chi'(|(u, v)|)uQ(u, v)}{\sqrt{u^{2}+v^{2}}}+\chi(|(u, v)|)Q_{u}(u, v)-\frac{\chi'(|(u, v)|) Au(u^{2}+v^{2})}{\sqrt{u^{2}+v^{2}}} \\
&+(1-\chi(|(u, v)|))2uA
\end{align*}
and
\begin{align*}
H_{v}(x, u, v)&=\frac{\chi'(|(u, v)|)vQ(u, v)}{\sqrt{u^{2}+v^{2}}}+\chi(|(u, v)|)Q_{v}(u, v)-\frac{\chi'(|(u, v)|) Av(u^{2}+v^{2})}{\sqrt{u^{2}+v^{2}}}\\
&+(1-\chi(|(u, v)|))2vA.
\end{align*}
Then, we deduce that
\begin{align}\label{Aumeno}
&u^{-}H_{u}(x, u, v)+v^{-}H_{v}(x, u, v) \nonumber \\
&=\frac{\chi'(|(u, v)|)(u^{-})^{2}Q(u, v)}{\sqrt{u^{2}+v^{2}}}-\frac{\chi'(|(u, v)|) A(u^{-})^{2}(u^{2}+v^{2})}{\sqrt{u^{2}+v^{2}}} +(1-\chi(|(u, v)|))2(u^{-})^{2}A \nonumber \\
&\quad+\frac{\chi'(|(u, v)|) (v^{-})^{2}Q(u, v)}{\sqrt{u^{2}+v^{2}}} -\frac{\chi'(|(u, v)|) A(v^{-})^{2}(u^{2}+v^{2})}{\sqrt{u^{2}+v^{2}}}+(1-\chi(|(u, v)|))2(v^{-})^{2}A    \nonumber \\
&=\frac{\chi'(|(u, v)|)[(u^{-})^{2}+(v^{-})^{2}]Q(u, v)}{\sqrt{u^{2}+v^{2}}}-\frac{\chi'(|(u, v)|) A[(u^{-})^{2}+(v^{-})^{2}](u^{2}+v^{2})}{\sqrt{u^{2}+v^{2}}}  \nonumber \\
&\quad+(1-\chi(|(u, v)|))2[(u^{-})^{2}+(v^{-})^{2}]A.
\end{align}
Taking into account \eqref{Aumeno} and the definitions of $A$ and $\chi$, we can find a constant $C>0$ such that
\begin{align}\label{Aumenovmeno}
\left|u^{-}H_{u}(x, u, v)+v^{-}H_{v}(x, u, v)\right|\leq  CA[(u^{-})^{2}+(v^{-})^{2}] \mbox{ for any } x\in \R^{N}\setminus \Lambda.
\end{align}
Recalling that $A\rightarrow 0$ as $a\rightarrow 0$, we can see that \eqref{SqC} and \eqref{Aumenovmeno} imply
\begin{align}\label{SqC1}
\iint_{\R^{2N}} \frac{|u^{-}(x)-u^{-}(y)|^{2}}{|x-y|^{N+2s}}&+\frac{|v^{-}(x)-v^{-}(y)|^{2}}{|x-y|^{N+2s}} \, dx dy+\int_{\R^{N}}V(\e x) (u^{-})^{2}+W(\e x) (v^{-})^{2} \, dx \nonumber \\
&\leq C\int_{\Lambda_{\e}} u^{-} Q_{u}(u, v)+v^{-} Q_{v}(u, v) \, dx
\end{align}
for any $a$ sufficiently small. By the definition of $Q$, we know that $Q_{u}(u^{-}, v)=0=Q_{v}(u, v^{-})$, so we get
$$
\int_{\Lambda_{\e}} u^{-} Q_{u}(u, v)+v^{-} Q_{v}(u, v) \, dx= \int_{\Lambda_{\e}} u^{-} Q_{u}(u^{-}, v)+v^{-} Q_{v}(u, v^{-}) \, dx=0.
$$
Using \eqref{SqC1}, we can infer that $\|(u^{-}, v^{-})\|_{\e}^{2}=0$, that is $u^{-}=v^{-}=0$ in $\R^{N}$.

\end{proof}

\section{compactness properties}
This section is devoted to prove compactness properties related to the functional $\J_{\e}$.
Since we are interested in obtaining multiple critical points, we work with the functional $\J_{\e}$ restricted to the Nehari manifold $\N_{\e}$. 
We begin by proving some useful properties of $\N_{\e}$.
\begin{lem}\label{lemma2.2}
There exist positive constants $a_{1}$, $\delta$ such that, for each $a\in (0, a_{1})$, $(u,v)\in \N_{\e}$, there hold
\begin{equation}\label{5}
\int_{\Lambda_{\e}} Q(u, v) \, dx\geq \delta
\end{equation}
and 
\begin{equation}\label{6}
\int_{\R^{N}\setminus \Lambda_{\e}} (V(\e x) u^{2} + W(\e x) v^{2}) \, dx\leq 2p \int_{\Lambda_{\e}} Q(u, v) \, dx. 
\end{equation}
\end{lem}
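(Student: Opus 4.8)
The plan is to derive both estimates directly from the Nehari identity $\langle\J_{\e}'(u,v),(u,v)\rangle=0$ combined with the pointwise properties \eqref{2.4A}, \eqref{2.5A} and \eqref{2.6A} of the penalized nonlinearity established in Lemma \ref{lem2.2Alves}. First I would fix once and for all the value $k=2$ in \eqref{2.6A} and let $a_{1}>0$ be the corresponding threshold (provided by Lemma \ref{lem2.2Alves}), so that \eqref{2.6A} holds for every $a\in(0,a_{1})$. For $(u,v)\in\N_{\e}$, splitting the integral over $\Lambda_{\e}$ and its complement, using $H=Q$ on $\Lambda_{\e}$ together with \eqref{2.4A} (equivalently \eqref{2.1}), and using \eqref{2.6A} on $\R^{N}\setminus\Lambda_{\e}$, one obtains
$$
\|(u,v)\|_{\e}^{2}=\int_{\R^{N}}(uH_{u}(\e x,u,v)+vH_{v}(\e x,u,v))\,dx\leq p\int_{\Lambda_{\e}}Q(u,v)\,dx+\tfrac{1}{2}\int_{\R^{N}\setminus\Lambda_{\e}}(V(\e x)u^{2}+W(\e x)v^{2})\,dx.
$$
This single inequality is the engine for both claims.

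To prove \eqref{6}, on the left-hand side I would discard the nonnegative Gagliardo seminorms and the part of the potential term over $\Lambda_{\e}$, keeping only $\int_{\R^{N}\setminus\Lambda_{\e}}(V(\e x)u^{2}+W(\e x)v^{2})\,dx$, and then absorb the $\tfrac{1}{2}$-fraction appearing on the right; this leaves $\tfrac{1}{2}\int_{\R^{N}\setminus\Lambda_{\e}}(V(\e x)u^{2}+W(\e x)v^{2})\,dx\leq p\int_{\Lambda_{\e}}Q(u,v)\,dx$, which is precisely \eqref{6}.

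To prove \eqref{5}, I would instead bound $\int_{\R^{N}\setminus\Lambda_{\e}}(V(\e x)u^{2}+W(\e x)v^{2})\,dx\leq\|(u,v)\|_{\e}^{2}$ in the displayed inequality and absorb, getting $\tfrac{1}{2}\|(u,v)\|_{\e}^{2}\leq p\int_{\Lambda_{\e}}Q(u,v)\,dx$. On the other hand, $(Q1)$ and $(Q2)$ give $Q(u,v)\leq C(|u|^{p}+|v|^{p})$ (via \eqref{2.1}), so Theorem \ref{Sembedding} yields $p\int_{\Lambda_{\e}}Q(u,v)\,dx\leq C\|(u,v)\|_{\e}^{p}$, where, crucially, $C$ does not depend on $\e$ because $(H3)$ forces $V(\e x),W(\e x)\geq\min\{V(x_{0}),W(x_{0})\}>0$ and hence makes the embedding $\X_{\e}\hookrightarrow L^{p}(\R^{N})\times L^{p}(\R^{N})$ continuous with an $\e$-uniform norm. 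Since $(u,v)\neq(0,0)$ on $\N_{\e}$, combining the last two inequalities gives $\|(u,v)\|_{\e}^{p-2}\geq\tfrac{1}{2C}$, hence a lower bound $\|(u,v)\|_{\e}\geq\rho>0$ independent of $\e$ and of the chosen element of $\N_{\e}$; feeding this back into $\tfrac{1}{2}\|(u,v)\|_{\e}^{2}\leq p\int_{\Lambda_{\e}}Q(u,v)\,dx$ produces \eqref{5} with $\delta:=\rho^{2}/(2p)$. I expect the only genuinely delicate point to be this last one, namely ensuring that $a_{1}$, $\rho$ and $\delta$ are independent of $\e$, which is exactly what $(H3)$ guarantees; everything else is an immediate consequence of the Nehari constraint and of Lemma \ref{lem2.2Alves}.
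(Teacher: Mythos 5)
Your proof is correct and follows essentially the same route as the paper: first establish the $\e$-uniform lower bound $\|(u,v)\|_{\e}\geq\rho>0$ on the Nehari manifold via the Nehari identity, \eqref{2.6A} with $k=2$, $(Q2)$ and the Sobolev embedding; then deduce \eqref{5} from $\tfrac{1}{2}\|(u,v)\|_{\e}^{2}\leq p\int_{\Lambda_{\e}}Q$; and obtain \eqref{6} by discarding the seminorms and the $\Lambda_{\e}$-part of the potential before absorbing the $\tfrac{1}{2}$-term. The only difference is cosmetic — you package everything as a single ``engine'' inequality and branch, whereas the paper runs the two chains of inequalities separately — and your explicit remark that $(H3)$ guarantees the embedding constant is $\e$-uniform is a point the paper uses tacitly.
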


\begin{proof}
Using \eqref{2.6A}, $(Q2)$ and Theorem \ref{Sembedding}, we can see that for any $(u, v)\in \N_{\e}$ it holds
\begin{align*}
\|(u, v)\|^{2}_{\e}&=\int_{\Lambda_{\e}} (u Q_{u} + v Q_{v}) \, dx+ \int_{\R^{N}\setminus \Lambda_{\e}} (u H_{u} + vH_{v}) \, dx \\
&\leq C \int_{\Lambda_{\e}} (|u|^{p}+|v|^{p}) \, dx+\frac{1}{2} \int_{\R^{N}} (V(\e x) u^{2} + W(\e x) v^{2}) \, dx \\
&\leq C \|(u, v)\|^{p}_{\e}+\frac{1}{2} \|(u, v)\|^{2}_{\e}
\end{align*} 
which implies that there is $\hat{\delta}>0$ such that
\begin{equation*}
\|(u, v)\|_{\e} \geq \hat{\delta} \, \mbox{ for any } (u, v)\in \N_{\e}.
\end{equation*}
Thus, using \eqref{2.1} and \eqref{2.6A} (with $k=2$), we obtain
\begin{align*}
\hat{\delta}^{2} \leq \|(u,v)\|_{\e}^{2} &= \int_{\Lambda_{\e}} (u Q_{u} + v Q_{v}) \, dx+ \int_{\R^{N}\setminus \Lambda_{\e}} (u H_{u} + vH_{v}) \, dx\\
&\leq p \int_{\Lambda_{\e}} Q(u, v) \, dx+ \frac{1}{2} \int_{\R^{N}\setminus \Lambda_{\e}} (V(\e x) u^{2} + W(\e x) v^{2}) \, dx,  
\end{align*}
which gives
\begin{equation*}
\frac{\hat{\delta}^{2}}{2}\leq \frac{1}{2} \|(u,v)\|_{\e}^{2} \leq p \int_{\Lambda_{\e}} Q(u, v) \, dx. 
\end{equation*}
Therefore, \eqref{5} holds with $\delta= \frac{\hat{\delta}^{2}}{2p}$. \\
Now, taking into account $(u,v)\in \N_{\e}$, \eqref{2.1} and \eqref{2.6A}, we get
\begin{align*}
\int_{\R^{N}\setminus \Lambda_{\e}} (V(\e x) u^{2} + W(\e x) v^{2}) \, dx &\leq \int_{\R^{N}\setminus \Lambda_{\e}} (uH_{u} + vH_{v}) \, dx+ \int_{\Lambda_{\e}} (uQ_{u} + vQ_{v}) \,dx\\
&\leq \frac{1}{2} \int_{\R^{N}\setminus \Lambda_{\e}} (V(\e x)u^{2} + W(\e x) v^{2} ) \, dx+ p\int_{\Lambda_{\e}} Q(u,v) \, dx, 
\end{align*}
which implies that \eqref{6} is satisfied.  
\end{proof}

\noindent
Now we aim to show that the functional $\J_{\e}$ restricted to $\N_{\e}$, satisfies the Palais-Smale condition. To achieve our goal we prove the following technical lemma. 

\begin{lem}\label{lemma2.3}
Let $\phi_{\e}: \mathbb{H}_{\e} \rightarrow \R$ be given by
\begin{equation*}
\phi_{\e}(u, v):= \|(u,v)\|_{\e}^{2} - \int_{\R^{N}} (uH_{u}(\e x, u, v) + vH_{v}(\e x, u,v)) \, dx. 
\end{equation*}
Then, there exist $a_{2}, b>0$ such that, for each $a\in (0, a_{2})$, 
\begin{equation}\label{7}
\langle\phi_{\e}'(u,v),(u,v)\rangle\leq -b<0 \, \mbox{ for each } (u, v)\in \N_{\e}. 
\end{equation}
\end{lem}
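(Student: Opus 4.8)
The plan is to compute $\langle\phi_\e'(u,v),(u,v)\rangle$ explicitly for $(u,v)\in\N_\e$ by means of the homogeneity identities \eqref{2.1} and \eqref{2.11}, to extract a strictly negative contribution from $\Lambda_\e$, and to show that the contribution from $\R^N\setminus\Lambda_\e$ becomes negligible once $a$ is small. Since $H$ is of class $C^{2}$ in $(u,v)$, differentiating $t\mapsto\phi_\e(tu,tv)$ at $t=1$ yields
\[
\langle\phi_\e'(u,v),(u,v)\rangle=2\|(u,v)\|_\e^{2}-\int_{\R^N}\bigl[uH_u+vH_v+u^{2}H_{uu}+2uvH_{uv}+v^{2}H_{vv}\bigr]\,dx,
\]
where the derivatives of $H$ are evaluated at $(\e x,u,v)$. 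For $(u,v)\in\N_\e$ we have $\|(u,v)\|_\e^{2}=\int_{\R^N}(uH_u+vH_v)\,dx$, hence
\[
\langle\phi_\e'(u,v),(u,v)\rangle=\int_{\R^N}\bigl[uH_u+vH_v-u^{2}H_{uu}-2uvH_{uv}-v^{2}H_{vv}\bigr]\,dx .
\]

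Next I split this integral over $\Lambda_\e$ and $\R^N\setminus\Lambda_\e$. On $\Lambda_\e$ one has $H(\e x,\cdot,\cdot)=Q$, and \eqref{2.1}, \eqref{2.11} give $uQ_u+vQ_v=pQ(u,v)$ and $u^{2}Q_{uu}+2uvQ_{uv}+v^{2}Q_{vv}=p(p-1)Q(u,v)$; therefore the integrand on $\Lambda_\e$ equals $-p(p-2)Q(u,v)\le 0$, and by \eqref{5} of Lemma \ref{lemma2.2},
\[
\int_{\Lambda_\e}\bigl[uH_u+vH_v-u^{2}H_{uu}-2uvH_{uv}-v^{2}H_{vv}\bigr]\,dx=-p(p-2)\int_{\Lambda_\e}Q(u,v)\,dx\le -p(p-2)\,\delta .
\]
On $\R^N\setminus\Lambda_\e$ we have $H(\e x,\cdot,\cdot)=\hat Q$, and the key point is the pointwise estimate
\[
\bigl|u\hat Q_u+v\hat Q_v-u^{2}\hat Q_{uu}-2uv\hat Q_{uv}-v^{2}\hat Q_{vv}\bigr|\le C\,A\,(u^{2}+v^{2})\qquad\text{for all }(u,v)\in\R^{2},
\]
with $C>0$ independent of $a$. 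Indeed this expression vanishes for $|(u,v)|\ge 5a$ (there $\hat Q(u,v)=A(u^{2}+v^{2})$) and equals $-p(p-2)Q(u,v)$ for $|(u,v)|\le a$; since the $p$-homogeneity of $Q$ and $p>2$ imply $Q(u,v)\le A(u^{2}+v^{2})$ whenever $|(u,v)|\le 5a$, those two regions are controlled. On the remaining annulus $a\le|(u,v)|\le 5a$ one differentiates $\hat Q=\eta(|(u,v)|)Q+(1-\eta(|(u,v)|))A(u^{2}+v^{2})$ twice and bounds each term using $|(u,v)|\sim a$, the homogeneity bounds $|Q|\lesssim a^{p}$, $|Q_u|+|Q_v|\lesssim a^{p-1}$, $|Q_{uu}|+|Q_{uv}|+|Q_{vv}|\lesssim a^{p-2}$, the estimates $|\eta'|\le C/a$ and $|\eta''|\le C/a^{2}$ from \eqref{3}, and the fact that $A$ is comparable to $a^{p-2}$; this gives a bound of order $A\,a^{2}\le A(u^{2}+v^{2})$.

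Finally I assemble the pieces. Using the above pointwise bound, the inequality $V(\e x)u^{2}+W(\e x)v^{2}\ge\alpha(u^{2}+v^{2})$ from $(H3)$ (with $\alpha=\min\{V(x_0),W(x_0)\}$), and \eqref{6} of Lemma \ref{lemma2.2},
\[
\int_{\R^N\setminus\Lambda_\e}\bigl[u\hat Q_u+v\hat Q_v-u^{2}\hat Q_{uu}-2uv\hat Q_{uv}-v^{2}\hat Q_{vv}\bigr]\,dx\le\frac{CA}{\alpha}\int_{\R^N\setminus\Lambda_\e}(V(\e x)u^{2}+W(\e x)v^{2})\,dx\le\frac{2pCA}{\alpha}\int_{\Lambda_\e}Q(u,v)\,dx .
\]
Combining with the estimate on $\Lambda_\e$ we get
\[
\langle\phi_\e'(u,v),(u,v)\rangle\le\Bigl(-p(p-2)+\frac{2pCA}{\alpha}\Bigr)\int_{\Lambda_\e}Q(u,v)\,dx .
\]
Since $A\to 0$ as $a\to 0^{+}$, we may pick $a_{2}\in(0,a_{1}]$ so small that $\tfrac{2pCA}{\alpha}\le\tfrac{p(p-2)}{2}$ for every $a\in(0,a_{2})$; then, using \eqref{5} once more,
\[
\langle\phi_\e'(u,v),(u,v)\rangle\le -\frac{p(p-2)}{2}\int_{\Lambda_\e}Q(u,v)\,dx\le -\frac{p(p-2)}{2}\,\delta=:-b<0,
\]
which is the desired conclusion. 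The only genuinely delicate step is the pointwise bound on the annulus $a\le|(u,v)|\le 5a$: one must check that the terms produced by two differentiations of $\hat Q$ — in particular those carrying the factor $\eta''$, which is only $O(a^{-2})$ — do not blow up but combine to size $O(A(u^{2}+v^{2}))$, and this is precisely where the normalization $A\asymp a^{p-2}$ and the homogeneity estimates for $Q$ and its first and second derivatives are used.
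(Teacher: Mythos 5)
Your proof is correct and follows essentially the same route as the paper: on $\N_{\e}$ compute $\langle\phi_{\e}'(u,v),(u,v)\rangle=\int_{\R^{N}}\bigl[uH_{u}+vH_{v}-u^{2}H_{uu}-2uvH_{uv}-v^{2}H_{vv}\bigr]\,dx$, use \eqref{2.1}--\eqref{2.11} on $\Lambda_{\e}$ to extract the strictly negative term $-p(p-2)\int_{\Lambda_{\e}}Q$, bound the penalized part on $\R^{N}\setminus\Lambda_{\e}$ by $O(A(u^{2}+v^{2}))$ with $A\to 0$ as $a\to 0^{+}$, and close with Lemma \ref{lemma2.2}. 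The only cosmetic difference is that you bound $D_{4}=\eta\,p(p-1)Q$ in magnitude by $CA(u^{2}+v^{2})$ (via $Q\leq A|z|^{2}$ for $|z|\leq 5a$), whereas the paper simply discards it using its favorable sign $D_{4}\geq 0$; both work and rest on the same $\eta',\eta''$ estimates and the scaling $A\asymp a^{p-2}$.
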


\begin{proof}
Given $(u,v)\in \N_{\e}$, we can use the definition of $H$, \eqref{2.1} and \eqref{2.11} to get
\begin{align}\begin{split}\label{8}
\langle\phi_{\e}'(u,v),(u,v)\rangle &= \int_{\Lambda_{\e}} (uQ_{u} + vQ_{v})-(u^{2}Q_{uu} + v^{2}Q_{vv} +2uvQ_{uv}) \, dx\\
&+\int_{\R^{N}\setminus \Lambda_{\e}} (uH_{u}+ vH_{v}) \, dx- \int_{\R^{N}\setminus \Lambda_{\e}} (u^{2}H_{uu} + v^{2}H_{vv} +2uvH_{uv}) \, dx\\
&=-p(p-2) \int_{\Lambda_{\e}} Q(u,v)\, dx + \int_{\R^{N}\setminus \Lambda_{\e}} D_{1} \, dx- \int_{\R^{N}\setminus \Lambda_{\e}} D_{2} \, dx
\end{split}\end{align}
where 
\begin{equation*}
D_{1}:= (uH_{u}+ vH_{v}) \, \mbox{ and } \, D_{2}:= (u^{2}Q_{uu} + v^{2}Q_{vv} +2uvQ_{uv}). 
\end{equation*}
We set $|z|=\sqrt{u^{2}+v^{2}}$. By the definitions of $\hat{Q}$ and $\eta$, and using \eqref{2.1} again, we can see that
\begin{align*}
|D_{1}|&= \left| \eta' \frac{Q}{|z|} + p\eta \frac{Q}{|z|^{2}} - A\eta'|z|+ 2A(1-\eta)\right| |z|^{2}\\
&\leq \left( \frac{C}{a} 5aA + pA + A\frac{C}{a} 5a + 4A\right) |z|^{2}\\
&\leq C_{1}A|z|^{2}. 
\end{align*}
Since $A\rightarrow 0$ as $a\rightarrow 0^{+}$, the last inequality together with $(H_{3})$ implies that
\begin{equation}\label{9}
\int_{\R^{N}\setminus \Lambda_{\e}} (uH_{u}+vH_{v}) \, dx\leq o(1) \int_{\R^{N}\setminus \Lambda_{\e}} (V(\e x) u^{2} + W(\e x) v^{2}) \, dx,
\end{equation}
where $o(1)\rightarrow 0$ as $a\rightarrow 0^{+}$. \\
Now we aim to estimate the last integral in \eqref{8}. Firstly we observe that 
\begin{equation}\label{10}
D_{2}= -A\eta'(|z|^{2}+ 4|z|)|z|^{2} + 2A(1-\eta) |z|^{2} + \eta''Q|z| |z|^{2} + D_{3}+ D_{4},
\end{equation}
with 
\begin{equation*}
D_{3}:= \frac{2\eta'}{|z|} (u^{3}Q_{u} + v^{3}Q_{v} + u^{2}vQ_{v} + uv^{2}Q_{u})
\end{equation*}
and 
\begin{equation*}
D_{4}:= \eta (u^{2}Q_{uu} + v^{2}Q_{vv} +2uvQ_{uv}). 
\end{equation*}
Thanks to \eqref{3}, we obtain that
\begin{equation*}
|A\eta'(|z|^{2} +4|z|) |z|^{2}|\leq A\frac{C}{a} (25a^{2} + 20 a)|z|^{2}= o(1) |z|^{2}. 
\end{equation*}
On the other hand, by the definition of $A$, we have 
\begin{equation*}
2A(1-\eta)|z|^{2}=o(1) |z|^{2} \, \mbox{ and } \, \eta''Q|z||z|^{2} = o(1) |z|^{2}. 
\end{equation*}
It follows from \eqref{2.1} that 
\begin{equation*}
|D_{3}| = |4p\eta'Q||z|\leq 4p\frac{C}{a}A|z|^{2} 5a = 20pCA|z|^{2}=o(1)|z|^{2}, 
\end{equation*}
and \eqref{2.11} implies that
\begin{equation*}
D_{4}= \eta (u^{2} Q_{uu} + v^{2}Q_{vv} + 2uvQ_{uv})= \eta p(p-1)Q\geq 0. 
\end{equation*}
Taking into account the above estimates, we deduce that
\begin{equation*}
\int_{\R^{N}\setminus \Lambda_{\e}} (u^{2}H_{uu} + v^{2}H_{vv} + 2uvH_{uv}) \, dx\leq o(1) \int_{\R^{N}\setminus \Lambda_{\e}} (V(\e x)u^{2} + W(\e x)v^{2}) \, dx. 
\end{equation*}
Thus, by \eqref{8} and \eqref{9}, we get 
\begin{equation*}
\langle\phi_{\e}'(u,v),(u,v)\rangle\leq -p(p-2) \int_{\Lambda_{\e}} Q(u,v)\, dx + o(1) \int_{\R^{N}\setminus \Lambda_{\e}} (V(\e x)u^{2} + W(\e x)v^{2}) \, dx. 
\end{equation*} 
Applying Lemma \ref{lemma2.2} we have, for $a$ small enough, 
\begin{equation*}
\langle\phi_{\e}'(u,v),(u,v)\rangle\leq (-p(p-2) +o(1))\int_{\Lambda_{\e}} Q(u,v) \, dx\leq -\frac{p(p-2)}{2}\delta = -b<0. 
\end{equation*}
 
\end{proof}

\noindent
At this point, we are able to deduce the following compactness result.
\begin{prop}\label{prop1}
The functional $\J_{\e}$ restricted to $\N_{\e}$ satisfies $(PS)_{c}$ for each $c\in \R$. 
\end{prop}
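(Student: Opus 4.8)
The plan is to combine the free Palais--Smale condition for $\J_{\e}$ on all of $\X_{\e}$ (Lemma \ref{lemma3.2-1}) with the fact that $\N_{\e}$ is a natural constraint, in the sense made quantitative by Lemma \ref{lemma2.3}. First I would take a sequence $\{(u_{n},v_{n})\}\subset\N_{\e}$ with $\J_{\e}(u_{n},v_{n})\to c$ and $\|(\J_{\e}|_{\N_{\e}})'(u_{n},v_{n})\|_{*}\to 0$. By the Lagrange multiplier rule applied on the $C^{1}$ manifold $\N_{\e}=\phi_{\e}^{-1}(0)$ (note $\phi_{\e}(u,v)=\langle\J_{\e}'(u,v),(u,v)\rangle$), there are multipliers $\mu_{n}\in\R$ with
\begin{equation*}
\J_{\e}'(u_{n},v_{n})=\mu_{n}\,\phi_{\e}'(u_{n},v_{n})+o_{n}(1)\quad\text{in }\X_{\e}^{*}.
\end{equation*}
Testing this identity with $(u_{n},v_{n})$ and using $\langle\J_{\e}'(u_{n},v_{n}),(u_{n},v_{n})\rangle=0$ (membership in $\N_{\e}$) gives $\mu_{n}\langle\phi_{\e}'(u_{n},v_{n}),(u_{n},v_{n})\rangle=o_{n}(1)$. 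Here is where Lemma \ref{lemma2.3} does the crucial work: for $a$ small enough, $\langle\phi_{\e}'(u_{n},v_{n}),(u_{n},v_{n})\rangle\le -b<0$ uniformly, so $\mu_{n}\to 0$.

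Next I would check that $\phi_{\e}'(u_{n},v_{n})$ stays bounded in $\X_{\e}^{*}$. This needs an a priori bound on $\|(u_{n},v_{n})\|_{\e}$, obtained exactly as in the proof of Lemma \ref{lemma3.2-1}: from $\J_{\e}(u_{n},v_{n})=c+o_{n}(1)$ together with $\langle\J_{\e}'(u_{n},v_{n}),(u_{n},v_{n})\rangle=0$ and the inequalities \eqref{2.4A}--\eqref{2.6A}, choosing $k>\tfrac{1}{2}(\tfrac12-\tfrac1p)^{-1}$, one gets boundedness. Given the bound, $\|\phi_{\e}'(u_{n},v_{n})\|_{\X_{\e}^{*}}\le C$ follows from $(Q2)$, \eqref{2.11}, the growth bounds on $H$ and its second derivatives built into the definition of $\hat Q$ and $\eta$, and the Sobolev embedding of Theorem \ref{Sembedding}. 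Hence $\mu_{n}\phi_{\e}'(u_{n},v_{n})\to 0$ in $\X_{\e}^{*}$, and therefore $\J_{\e}'(u_{n},v_{n})\to 0$ in $\X_{\e}^{*}$: the sequence is a genuine (unconstrained) Palais--Smale sequence for $\J_{\e}$ at level $c$.

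Finally I would invoke Lemma \ref{lemma3.2-1} to extract a subsequence converging strongly in $\X_{\e}$ to some $(u,v)$; since $\N_{\e}$ is closed and $\phi_{\e}$ is continuous, the limit lies in $\N_{\e}$, so $(PS)_{c}$ holds for $\J_{\e}|_{\N_{\e}}$. The main obstacle is the uniform estimate $\mu_{n}\to 0$, i.e. showing that $\N_{\e}$ is a natural constraint; this is precisely the content of Lemma \ref{lemma2.3}, whose quantitative bound $\langle\phi_{\e}'(u,v),(u,v)\rangle\le -b<0$ (valid only after choosing the truncation parameter $a$ small) is what makes the argument go through — without it one could not rule out a degenerate multiplier. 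The boundedness of the sequence and the final strong-convergence step are then routine given the earlier lemmas.
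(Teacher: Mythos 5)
Your proposal is correct and follows essentially the same route as the paper: invoke the Lagrange multiplier rule on $\N_{\e}=\phi_{\e}^{-1}(0)$, use $(u_{n},v_{n})\in\N_{\e}$ to kill the $\J_{\e}'$ term, apply the uniform bound $\langle\phi_{\e}'(u_{n},v_{n}),(u_{n},v_{n})\rangle\leq -b<0$ from Lemma \ref{lemma2.3} to force the multipliers to zero, and then appeal to the unconstrained Palais--Smale property in Lemma \ref{lemma3.2-1}. You are in fact a bit more careful than the paper in spelling out that the a priori bound on $\|(u_{n},v_{n})\|_{\e}$ is also needed to keep $\phi_{\e}'(u_{n},v_{n})$ bounded in $\X_{\e}^{*}$ before concluding $\mu_{n}\phi_{\e}'(u_{n},v_{n})\to 0$, and in noting that the strong limit remains on $\N_{\e}$.
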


\begin{proof}
Let $\{(u_{n}, v_{n})\}\subset \N_{\e}$ be such that
\begin{equation*}
\J_{\e}(u_{n}, v_{n})\rightarrow c \, \mbox{ and } \, \|\J_{\e}'(u_{n}, v_{n})\|_{*}=o_{n}(1), 
\end{equation*}
where $o_{n}(1)$ goes to zero when $n\rightarrow \infty$. Then, there exists $\{\lambda_{n}\}\subset \R$ satisfying
\begin{equation*}
\J_{\e}'(u_{n}, v_{n})= \lambda_{n} \phi_{\e}'(u_{n}, v_{n})+ o_{n}(1), 
\end{equation*}
with $\phi_{\e}$ as in Lemma \ref{lemma2.3}. Due to the fact that $(u_{n}, v_{n})\in \N_{\e}$, we get
\begin{equation}\label{Ter}
0= \langle \J_{\e}'(u_{n}, v_{n}), (u_{n}, v_{n})\rangle= \lambda_{n} \langle\phi_{\e}'(u_{n}, v_{n}),(u_{n}, v_{n})\rangle + o_{n}(1) \|(u_{n}, v_{n})\|_{\e}. 
\end{equation}
Proceeding as in the proof of Lemma \ref{lemma3.2-1} we can see that there exists $C>0$ such that 
$$
\|(u_{n}, v_{n})\|_{\e}\leq C \mbox{ for any } n\in \mathbb{N}.
$$ 
On the other hand, from Lemma \ref{lemma2.3}, we may assume that $\langle\phi_{\e}'(u_{n}, v_{n}),(u_{n}, v_{n})\rangle\rightarrow \ell<0$. \\
Then, in view of \eqref{Ter}, we can deduce that $\lambda_{n}\rightarrow 0$ and that $\J_{\e}'(u_{n}, v_{n})\rightarrow 0$ in the dual space of $\X_{\e}$. Invoking Lemma \ref{lemma3.2-1} we can infer that $\{(u_{n}, v_{n})\}$ admits a convergent subsequence in $\X_{\e}$. 
\end{proof}


\section{barycenter map and multiplicity of solutions to \eqref{P''}}

In this section our main purpose is to apply the Ljusternik-Schnirelmann category theory to prove a multiplicity result for system \eqref{P''}. In order to accomplish our goal, we first give some useful lemmas.
We start by proving the following result.
\begin{lem}\label{lem3.1}
Let $\e_{n}\rightarrow 0^{+}$ and $\{(u_{n}, v_{n})\}\subset \N_{\e_{n}}$ be such that $\J_{\e_{n}}(u_{n}, v_{n})\rightarrow C^{*}$. Then there exists $\{\tilde{y}_{n}\}\subset \R^{N}$ such that the translated sequence 
\begin{equation*}
(\tilde{u}_{n}(x), \tilde{v}_{n}(x)):=(u_{n}(x+ \tilde{y}_{n}), v_{n}(x+\tilde{y}_{n}))
\end{equation*}
has a subsequence which converges in $\X_{0}$. Moreover, up to a subsequence, $\{y_{n}\}:=\{\e_{n}\tilde{y}_{n}\}$ is such that $y_{n}\rightarrow y\in M$. 
\end{lem}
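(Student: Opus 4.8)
The plan is to run the standard concentration-compactness argument for penalized problems, adapted to the nonlocal system. First I would estimate the minimax levels. Since $\mathcal{N}_{\e_n}$ and the mountain pass level $c_{\e_n}$ coincide, and since on $\Lambda_\e$ the penalized nonlinearity $H$ equals $Q$, one shows $\limsup_{n}\mathcal{J}_{\e_n}(u_n,v_n)\le\limsup_n c_{\e_n}\le C^*$ by testing with a suitably rescaled ground state of the autonomous problem $\mathcal{J}_{x_0}$ centered deep inside $\Lambda_{\e_n}$; combined with the hypothesis $\mathcal{J}_{\e_n}(u_n,v_n)\to C^*$ this pins the level. Next, using Lemma \ref{lemma2.2} together with the Palais-Smale type estimates already established (the computation in Lemma \ref{lemma3.2-1}), the sequence $\{(u_n,v_n)\}$ is bounded in each $\X_{\e_n}$, and by a non-vanishing argument (exactly the dichotomy argument used in the proof of Lemma \ref{C0}, via the inequality $(-\Delta)^s z_n+\alpha z_n\le Kz_n^{p-1}$ for $z_n=u_n+v_n$ and the pointwise bound at a maximum point) one produces $\{\tilde y_n\}\subset\R^N$ and $\beta>0$ with $\int_{B_1(\tilde y_n)}(u_n^2+v_n^2)\,dx\ge\beta$. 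Set $(\tilde u_n,\tilde v_n)=(u_n(\cdot+\tilde y_n),v_n(\cdot+\tilde y_n))$; by boundedness, up to a subsequence $(\tilde u_n,\tilde v_n)\rightharpoonup(\tilde u,\tilde v)\ne(0,0)$ in $\X_0$.

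Then I would identify the limit problem. Setting $y_n=\e_n\tilde y_n$, the key sub-step is to show $\{y_n\}$ stays bounded and, up to a subsequence, $y_n\to y$ with $V(y),W(y)$ being the relevant limiting potentials; more precisely one must show $y\in\overline{\Lambda}$ and in fact $y\in M$. To get boundedness of $\{y_n\}$: if $|y_n|\to\infty$ then, since the translated functions carry mass of order $\beta$ near the origin, one compares the energy with that of the autonomous problem at infinity and uses $(H3)$ (so $V,W\ge V(x_0),W(x_0)$) to derive $C^*\ge C(\text{something})\ge$ the limiting energy, while the penalization on $\R^N\setminus\Lambda_{\e_n}$ forces a strict loss — contradicting the level equality. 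Passing to the limit in the weak formulation of \eqref{P''}, using $\e_n\tilde y_n=y_n\to y$ and the locally uniform continuity of $V,W$, one checks $(\tilde u,\tilde v)$ solves the autonomous system with potentials $V(y),W(y)$, i.e. $\mathcal{J}_y'(\tilde u,\tilde v)=0$, hence $\mathcal{J}_y(\tilde u,\tilde v)\ge C(y)\ge C^*$. On the other hand, by Fatou's lemma applied to $\mathcal{J}_{\e_n}(u_n,v_n)-\tfrac1\theta\langle\mathcal{J}_{\e_n}'(u_n,v_n),(u_n,v_n)\rangle$ with $\theta\in(2,p)$ (exactly as in Lemma \ref{C0}), one gets $C(y)\le\mathcal{J}_y(\tilde u,\tilde v)\le\liminf\mathcal{J}_{\e_n}(u_n,v_n)=C^*$. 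Hence $C(y)=C^*=\inf_{\xi}C(\xi)$, so $y\in M$; in particular $y\in\Lambda$ by Lemma \ref{C00}, which also rules out $y\in\partial\Lambda$.

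Finally, strong convergence. Knowing $C(y)=C^*$ and that the translated sequence is a minimizing sequence for $\mathcal{J}_y$ on its Nehari manifold with nontrivial weak limit, I would invoke Theorem \ref{prop2.2}: since $(\tilde u,\tilde v)\ne0$, the translation parameters there can be taken to be zero and $(\tilde u_n,\tilde v_n)\to(\tilde u,\tilde v)$ strongly in $\X_0$. One small point to address is that $(\tilde u_n,\tilde v_n)\in\mathcal{N}_{\e_n}$ rather than exactly in $\mathcal{N}_y$; but because $y_n\to y$ and $\langle\mathcal{J}_{\e_n}'(u_n,v_n),(u_n,v_n)\rangle=0$, the defect between $\|(\tilde u_n,\tilde v_n)\|^2_{\e_n}$ (norm with shifted potentials $V(\e_n x+y_n)$) and $\|(\tilde u_n,\tilde v_n)\|^2_y$ vanishes, and also $\int_{\R^N\setminus\Lambda_{\e_n}}$ contributions are controlled by \eqref{2.6A}, \eqref{6} so that the penalized terms are negligible; this lets one reduce to the genuine autonomous Nehari setting and apply Theorem \ref{prop2.2}.

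The main obstacle I anticipate is the second paragraph: proving $\{y_n\}$ is bounded and that the limit point lands precisely in $M$ (not merely in $\overline\Lambda$), which requires carefully combining the level estimate $C^*$, the monotonicity hypothesis $(H3)$, the strict inequality of Lemma \ref{C00}, and the penalization estimates \eqref{2.6A}–\eqref{2.7A} to exclude escape to infinity or concentration on $\partial\Lambda$ — the nonlocality makes the "splitting of energy" estimates more delicate than in the local case, since cutting off a function does not decouple the Gagliardo seminorm.
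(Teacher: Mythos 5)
Your outline shares the essential ingredients (Lions-type concentration, a Nehari projection, Fatou, Theorem~\ref{prop2.2}), but it reverses the key ordering the paper relies on, and this creates a genuine circularity. In the paper, after extracting the translated weak limit $(\tilde u,\tilde v)\neq 0$, one immediately projects $(\tilde u_n,\tilde v_n)$ onto the \emph{autonomous} Nehari manifold $\N_{x_0}$ via $t_n>0$, and uses $(H3)$ together with the pointwise comparison between $H$ and $Q$ to obtain the sandwich
$C^*\leq \J_{x_0}(t_n\tilde u_n,t_n\tilde v_n)\leq \J_{\e_n}(t_n u_n,t_n v_n)\leq \J_{\e_n}(u_n,v_n)=C^*+o_n(1)$.
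This turns $(t_n\tilde u_n,t_n\tilde v_n)$ into a genuine minimizing sequence on $\N_{x_0}$ with nonzero weak limit, so Theorem~\ref{prop2.2} applies \emph{at this stage} and yields strong convergence in $\X_0$. That strong convergence is then what makes the tail integrals $\int_{\R^{N}\setminus B_{R/\e_n}}\tilde u_n H_u+\tilde v_n H_v\,dx$ negligible, which is precisely the estimate needed to rule out $|y_n|\to\infty$ and $y\notin\overline\Lambda$ by combining \eqref{2.6A} with $(H3)$. You instead postpone Theorem~\ref{prop2.2} to the very end and attempt to bound $\{y_n\}$ and identify the limit problem with only weak convergence in hand; but each of the estimates you invoke (the ``strict loss from the penalization'', the claim that $\|(\tilde u_n,\tilde v_n)\|^2_{\e_n}-\|(\tilde u_n,\tilde v_n)\|^2_y$ vanishes, the control of the $\R^{N}\setminus\Lambda_{\e_n}$ contributions) quietly requires tightness or uniform integrability of the sequence, which at that point you do not yet have. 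This is the circularity the paper's projection step is designed to break.

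Two further concrete issues. First, you propose to obtain the nonvanishing via the maximum-point argument from the proof of Lemma~\ref{C0}, i.e.\ the pointwise inequality $(-\Delta)^s z_n+\alpha z_n\leq K z_n^{p-1}$ evaluated at a maximum of $z_n$. That argument needs $(u_n,v_n)$ to \emph{solve} a PDE; here $(u_n,v_n)\in\N_{\e_n}$ are merely constrained points, not critical points of $\J_{\e_n}$, so the subsolution inequality is unavailable. The correct tool is a Lions-type vanishing lemma applied to the Nehari identity, as in \cite{ASy}, which is what the paper cites. Second, your plan to pass to the limit in the weak formulation to conclude $\J_y'(\tilde u,\tilde v)=0$ is problematic before you know $y\in \Lambda$: if $y\in\partial\Lambda$ the penalized nonlinearity $H(\e_n x+y_n,\cdot)$ has no well-defined pointwise limit (the rescaled sets $(\Lambda-y_n)/\e_n$ converge to a half-space, not to $\R^N$ or $\emptyset$), so the limit equation is not the autonomous one at $y$. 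The paper sidesteps this entirely by working with the sequence projected onto $\N_{x_0}$ and applying Fatou to $\J_y(\hat u,\hat v)$, never needing to pass to the limit in the penalized equation.
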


\begin{proof}
Since $\langle \J'_{\e_{n}}(u_{n}, v_{n}), (u_{n}, v_{n}) \rangle=0$ and $\J_{\e_{n}}(u_{n}, v_{n})\rightarrow C^{*}$, it is easy to see that $\{(u_{n}, v_{n})\}$ is bounded in $\X_{\e}$. 
Let us observe that $\|(u_{n}, v_{n})\|_{\e_{n}}\nrightarrow 0$ since $C^{*}>0$. Therefore, arguing as in \cite{ASy}, we can find a sequence $\{\tilde{y}_{n}\}\subset \R^{N}$ and constants $R, \gamma>0$ such that
\begin{equation*}
\liminf_{n\rightarrow \infty}\int_{B_{R}(y_{n})} (|u_{n}|^{2}+|v_{n}|^{2}) dx\geq \gamma,
\end{equation*}
and we may assume that
\begin{equation*}
(\tilde{u}_{n}, \tilde{v}_{n})\rightharpoonup (\tilde{u}, \tilde{v}) \mbox{ weakly in } \X_{0},  
\end{equation*}
where $(\tilde{u}_{n}(x), \tilde{v}_{n}(x)):=(u_{n}(x+ \tilde{y}_{n}), v_{n}(x+\tilde{y}_{n}))$ and $(\tilde{u}, \tilde{v})\neq (0,0)$. \\
Let $\{t_{n}\}\subset (0, +\infty)$ be such that $(\hat{u}_{n}, \hat{v}_{n}):=(t_{n}\tilde{u}_{n}, t_{n}\tilde{v}_{n})\in \N_{x_{0}}$, and set $y_{n}:=\e_{n}\tilde{y}_{n}$.  \\
Using the definition of $H$ and $(H3)$ we can see that
\begin{align*}
C^{*}\leq \J_{x_{0}}(\hat{u}_{n}, \hat{v}_{n})&= \frac{t_{n}^{2}}{2} \|(u_{n}, v_{n})\|^{2}_{x_{0}} - \int_{\R^{N}} Q(t_{n} u_{n}, t_{n} v_{n})\, dx \\
&\leq \frac{t_{n}^{2}}{2} \|(u_{n}, v_{n})\|^{2}_{\e_{n}} - \int_{\R^{N}} H(\e_{n} x, t_{n} u_{n}, t_{n} v_{n})\, dx \\
&=\J_{\e_{n}}(t_{n}u_{n}, t_{n}v_{n}) \leq \J_{\e_{n}}(u_{n}, v_{n})= C^{*}+ o_{n}(1),
\end{align*}
which gives $\J_{x_{0}}(\hat{u}_{n}, \hat{v}_{n})\rightarrow C^{*}$. \\
Now, the sequence $\{t_{n}\}$ is bounded since $\{(\tilde{u}_{n}, \tilde{v}_{n})\}$ and $\{(\hat{u}_{n}, \hat{v}_{n})\}$ are bounded in $\X_{0}$, and $(\tilde{u}_{n}, \tilde{v}_{n})\nrightarrow 0$. Therefore, up to a subsequence, $t_{n}\rightarrow t_{0}\geq 0$. Indeed $t_{0}>0$. Otherwise, if $t_{0}=0$, from the boundedness of $\{(\tilde{u}_{n}, \tilde{v}_{n})\}$, we get $(\hat{u}_{n}, \hat{v}_{n})= t_{n}(\tilde{u}_{n}, \tilde{v}_{n}) \rightarrow (0,0)$, that is $\J_{x_{0}}(\hat{u}_{n}, \hat{v}_{n})\rightarrow 0$ in contrast with the fact that $C^{*}>0$. Thus, $t_{0}>0$ and, up to a subsequence, we have $(\hat{u}_{n}, \hat{v}_{n})\rightharpoonup t_{0}(\tilde{u}, \tilde{v})= (\hat{u}, \hat{v})\neq 0$ weakly in $\X_{0}$. 
Hence it holds
\begin{equation*}
\J_{x_{0}}(\hat{u}_{n}, \hat{v}_{n})\rightarrow C^{*} \quad \mbox{ and } \quad (\hat{u}_{n}, \hat{v}_{n})\rightharpoonup (\hat{u}, \hat{v}) \mbox{ weakly in } \X_{0}.
\end{equation*}
From Theorem \ref{prop2.2} we deduce that $(\hat{u}_{n}, \hat{v}_{n})\rightarrow (\hat{u}, \hat{v})$ in $\X_{0}$, that is $(\tilde{u}_{n}, \tilde{v}_{n})\rightarrow (\tilde{u}, \tilde{v})$ in $\X_{0}$. \\
Now we show that $\{y_{n}\}$ has a subsequence, still denoted by itself, such that $y_{n}\rightarrow y\in M$. 
Assume by contradiction that $\{y_{n}\}$ is not bounded, that is there exists a subsequence, still denoted by $\{y_{n}\}$, such that $|y_{n}|\rightarrow +\infty$. 
Since $(u_{n}, v_{n})\in \N_{\e_{n}}$, we can see that
\begin{align*}
&\int_{\R^{N}} |(-\Delta)^{\frac{s}{2}}\tilde{u}_{n}|^{2}+|(-\Delta)^{\frac{s}{2}}\tilde{v}_{n}|^{2}+V(\e_{n}x+y_{n})|\tilde{u}_{n}|^{2}+ W(\e_{n}x+y_{n})|\tilde{v}_{n}|^{2} dx \\
&=\int_{\R^{N}} \tilde{u}_{n}H_{u}(\e_{n} x+y_{n},\tilde{u}_{n} ,\tilde{v}_{n})+\tilde{v}_{n}H_{v}(\e_{n} x+y_{n},\tilde{u}_{n} ,\tilde{v}_{n})\, dx.
\end{align*}
Take $R>0$ such that $\Lambda \subset B_{R}$. Since we may assume that $|y_{n}|>2R$, for any $x\in B_{R/\e_{n}}$ we get $|\e_{n} x+y_{n}|\geq |y_{n}|-|\e_{n} x|>R$.
Then, we deduce that
\begin{align*}
&\int_{\R^{N}} \tilde{u}_{n}H_{u}(\e_{n} x+y_{n},\tilde{u}_{n} ,\tilde{v}_{n})+\tilde{v}_{n}H_{v}(\e_{n} x+y_{n},\tilde{u}_{n} ,\tilde{v}_{n})\, dx \\
&\leq \frac{1}{2} \int_{B_{R/\e_{n}}} V(\e_{n}x+y_{n})|\tilde{u}_{n}|^{2}+ W(\e_{n}x+y_{n})|\tilde{v}_{n}|^{2} dx \\
&+\int_{\R^{N}\setminus B_{R/\e_{n}}} \tilde{u}_{n}H_{u}(\e_{n} x+y_{n},\tilde{u}_{n} ,\tilde{v}_{n})+\tilde{v}_{n}H_{v}(\e_{n} x+y_{n},\tilde{u}_{n} ,\tilde{v}_{n})\, dx \\
&=\frac{1}{2} \int_{B_{R/\e_{n}}} V(\e_{n}x+y_{n})|\tilde{u}_{n}|^{2}+ W(\e_{n}x+y_{n})|\tilde{v}_{n}|^{2} dx+o_{n}(1),
\end{align*}
where we used the strong convergence of $(\tilde{u}_{n}, \tilde{v}_{n})$ and that $|\R^{N}\setminus B_{R/\e_{n}}|\rightarrow 0$ as $n\rightarrow \infty$.
In virtue of $(H3)$ we get
$$
\left(1-\frac{1}{2}\right)\|(\tilde{u}_{n}, \tilde{v}_{n})\|^{2}_{x_{0}}=o_{n}(1),
$$
which is impossible due to $(\tilde{u}_{n}, \tilde{v}_{n})\rightarrow (\tilde{u}, \tilde{v})\neq 0$.
Thus $\{y_{n}\}$ is bounded and, up to a subsequence, we may suppose that $y_{n}\rightarrow y$. If $y\notin \overline{\Lambda}$, then there exists $r>0$ such that $y_{n}\in B_{r/2}(y)\subset \R^{N}\setminus \overline{\Lambda}$ for any $n$ large enough. Reasoning as before, we get a contradiction. Hence $y\in \overline{\Lambda}$. \\
Now, we prove that $y\in M$. Taking into account Lemma \ref{C00}, it is enough to prove that $C(y)=C^{*}$. Assume by contradiction that $C^{*}<C(y)$.
Since $(\hat{u}_{n}, \hat{v}_{n})\rightarrow (\hat{u}, \hat{v})$ strongly in $\X_{0}$, by Fatou's Lemma we have
\begin{align*}
C^{*}<C(y)&= \J_{y}(\hat{u}, \hat{v}) \\
&= \liminf_{n\rightarrow \infty} \Bigl\{ \frac{1}{2}\left(\int_{\R^{N}} |(-\Delta)^{\frac{s}{2}} \hat{u}_{n}|^{2}+ |(-\Delta)^{\frac{s}{2}} \hat{v}_{n}|^{2} dx\right) - \int_{\R^{N}} Q(\hat{u}_{n}, \hat{v}_{n})\, dx \nonumber\\
&+ \frac{1}{2} \int_{\R^{N}} (V(\e_{n}x + y_{n})|\hat{u}_{n}|^{2} + W(\e_{n}x+y_{n})|\hat{v}_{n}|^{2})\, dx\Bigr\} \nonumber \\
&\leq \liminf_{n\rightarrow \infty} \J_{\e_{n}}(t_{n}u_{n}, t_{n}v_{n}) \leq \liminf_{n\rightarrow \infty} \J_{\e_{n}} (u_{n}, v_{n})=C^{*}
\end{align*}
which gives a contradiction. 
\end{proof}

\noindent
Now, we aim to relate the number of positive solutions of \eqref{P''} with the topology of the set $M$.
For this reason, we take $\delta>0$ such that
$$
M_{\delta}=\{x\in \R^{N}: dist(x, M)\leq \delta\}\subset \Lambda,
$$
and we choose $\psi\in C^{\infty}_{0}(\R_{+}, [0, 1])$ a non-increasing function satisfying $\psi(t)=1$ if $0\leq t\leq \frac{\delta}{2}$ and $\psi(t)=0$ if $t\geq \delta$.
For any $y\in M$, we define 
$$
\Psi_{i, \e, y}(x):=\psi(|\e x-y|) w_{i}\left(\frac{\e x-y}{\e}\right) \quad i=1, 2
$$
and denote by $t_{\e}>0$ the unique number such that 
$$
\max_{t\geq 0} \J_{\e}(t \Psi_{1, \e, y}, t \Psi_{2, \e, y})=\J_{\e}(t_{\e} \Psi_{1,\e, y}, t_{\e} \Psi_{2,\e, y}),
$$
where $(w_{1}, w_{2})\in \X_{0}$ is a solution to autonomous system \eqref{P0} with $\xi=x_{0}$, such that $w_{1}, w_{2}>0$ in $\R^{N}$ and $\J_{x_{0}}(w_{1}, w_{2})=C(x_{0})=C^{*}$ (such solution there exists in view of Theorem $3.1$ in \cite{ASy}).

Finally, we consider $\Phi_{\e}: M\rightarrow \N_{\e}$ defined by setting
$$
\Phi_{\e}(y):=(t_{\e} \Psi_{1, \e, y}, t_{\e} \Psi_{2, \e, y}).
$$
Let us prove the following important relationship between $\J_{\e}$ and $M$.
\begin{lem}\label{lemma3.4FS}
The functional $\Phi_{\e}$ satisfies the following limit
\begin{equation}\label{3.2}
\lim_{\e\rightarrow 0} \J_{\e}(\Phi_{\e}(y))=C^{*} \mbox{ uniformly in } y\in M.
\end{equation}
\end{lem}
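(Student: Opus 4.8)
The plan is to argue by contradiction, reducing everything — via the change of variables $z=x-y/\e$ — to a few convergence statements for the fixed positive ground state $(w_{1},w_{2})$ of the autonomous system at $x_{0}$. Suppose \eqref{3.2} fails: then there are $\delta_{0}>0$, $\e_{n}\to 0^{+}$ and $y_{n}\in M$ such that $|\J_{\e_{n}}(\Phi_{\e_{n}}(y_{n}))-C^{*}|\geq\delta_{0}$ for all $n$. Setting $z=x-y_{n}/\e_{n}$ and $v_{i,n}(z):=\psi(\e_{n}|z|)\,w_{i}(z)$ we have $\Psi_{i,\e_{n},y_{n}}(x)=v_{i,n}(z)$, and since the Gagliardo seminorm is translation invariant,
\[
\|(\Psi_{1,\e_{n},y_{n}},\Psi_{2,\e_{n},y_{n}})\|_{\e_{n}}^{2}=[v_{1,n}]^{2}+[v_{2,n}]^{2}+\int_{\R^{N}}\big(V(\e_{n}z+y_{n})v_{1,n}^{2}+W(\e_{n}z+y_{n})v_{2,n}^{2}\big)\,dz,
\]
where $[\,\cdot\,]$ denotes the $\mathcal{D}^{s,2}(\R^{N})$-seminorm. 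Moreover, because $\psi$ is supported in $[0,\delta]$ and $y_{n}\in M$, the support of $\Psi_{i,\e_{n},y_{n}}$ is contained in $\{x:\e_{n}x\in\overline{B_{\delta}(y_{n})}\}\subset\{x:\e_{n}x\in M_{\delta}\}\subset\{x:\e_{n}x\in\Lambda\}$, so $H(\e_{n}x,\cdot,\cdot)=Q(\cdot,\cdot)$ there for every $n$; consequently $\int_{\R^{N}}H(\e_{n}x,t\Psi_{1,\e_{n},y_{n}},t\Psi_{2,\e_{n},y_{n}})\,dx=t^{p}\int_{\R^{N}}Q(v_{1,n},v_{2,n})\,dz$ for all $t\geq 0$.

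The core of the argument is to prove the two limits
\[
\|(\Psi_{1,\e_{n},y_{n}},\Psi_{2,\e_{n},y_{n}})\|_{\e_{n}}^{2}\to\|(w_{1},w_{2})\|_{x_{0}}^{2},\qquad\int_{\R^{N}}Q(v_{1,n},v_{2,n})\,dz\to\int_{\R^{N}}Q(w_{1},w_{2})\,dz.
\]
For the seminorm terms, since $w_{i}\in\mathcal{D}^{s,2}(\R^{N})$ and $\psi(\e_{n}|\cdot|)\to 1$, a standard truncation estimate for the fractional seminorm gives $[v_{i,n}]\to[w_{i}]$; for the $Q$-integral it suffices that $\|v_{i,n}-w_{i}\|_{L^{q}(\R^{N})}\to 0$ for $q\in[2,2^{*}_{s})$, which is immediate because $\psi(\e_{n}|\cdot|)\to 1$ and $w_{i}\in L^{q}(\R^{N})$, together with the growth bound $|Q(u,v)|\leq C(|u|^{p}+|v|^{p})$ coming from $(Q2)$ and \eqref{2.1}. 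For the potential terms I will first record the (routine) fact that $M\subset\{x:V(x)=V(x_{0}),\ W(x)=W(x_{0})\}$: if $(\bar u,\bar v)$ realizes $C(y)$ on $\N_{y}$ then $\int_{\R^{N}}Q(\bar u,\bar v)\,dx>0$ by the Nehari identity and \eqref{2.1}, hence $\bar u$ and $\bar v$ are both nontrivial in $L^{2}$, and comparing $\max_{t\geq 0}\J_{y}(t\bar u,t\bar v)$ with $\max_{t\geq 0}\J_{x_{0}}(t\bar u,t\bar v)$ via $(H3)$ shows that $V(y)>V(x_{0})$ or $W(y)>W(x_{0})$ would force $C(y)>C(x_{0})$, contradicting $y\in M$. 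Thus $V(y_{n})=V(x_{0})$ and $W(y_{n})=W(x_{0})$; by H\"older continuity $V(\e_{n}z+y_{n})\to V(x_{0})$ for each fixed $z$, while on $\overline{B_{\delta}(y_{n})}\subset\overline{\Lambda}$ these potentials are uniformly bounded and $|v_{i,n}|\leq|w_{i}|$, so the Dominated Convergence Theorem yields $\int_{\R^{N}}V(\e_{n}z+y_{n})v_{1,n}^{2}\,dz\to V(x_{0})\|w_{1}\|_{L^{2}}^{2}$, and likewise for $W$. This establishes the two displayed limits.

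To conclude, recall that $t_{\e_{n}}$ is characterized by $(t_{\e_{n}}\Psi_{1,\e_{n},y_{n}},t_{\e_{n}}\Psi_{2,\e_{n},y_{n}})\in\N_{\e_{n}}$; using $H=Q$ on the support, the homogeneity $(Q1)$ and \eqref{2.1}, this Nehari identity reads $t_{\e_{n}}^{p-2}\,p\int_{\R^{N}}Q(v_{1,n},v_{2,n})\,dz=\|(\Psi_{1,\e_{n},y_{n}},\Psi_{2,\e_{n},y_{n}})\|_{\e_{n}}^{2}$. Since $(w_{1},w_{2})\in\N_{x_{0}}$ gives $\|(w_{1},w_{2})\|_{x_{0}}^{2}=p\int_{\R^{N}}Q(w_{1},w_{2})\,dz$, the two limits above and the positivity of $\int_{\R^{N}}Q(w_{1},w_{2})\,dz$ force $t_{\e_{n}}\to 1$. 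Therefore, passing to the limit in
\[
\J_{\e_{n}}(\Phi_{\e_{n}}(y_{n}))=\frac{t_{\e_{n}}^{2}}{2}\|(\Psi_{1,\e_{n},y_{n}},\Psi_{2,\e_{n},y_{n}})\|_{\e_{n}}^{2}-t_{\e_{n}}^{p}\int_{\R^{N}}Q(v_{1,n},v_{2,n})\,dz
\]
and using $t_{\e_{n}}\to 1$ with the two limits above, we obtain $\J_{\e_{n}}(\Phi_{\e_{n}}(y_{n}))\to\tfrac12\|(w_{1},w_{2})\|_{x_{0}}^{2}-\int_{\R^{N}}Q(w_{1},w_{2})\,dz=\J_{x_{0}}(w_{1},w_{2})=C^{*}$, which contradicts $|\J_{\e_{n}}(\Phi_{\e_{n}}(y_{n}))-C^{*}|\geq\delta_{0}$. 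The main obstacle is the middle step: proving the convergence of $\|(\Psi_{1,\e_{n},y_{n}},\Psi_{2,\e_{n},y_{n}})\|_{\e_{n}}^{2}$ with no residual dependence on the position of $y_{n}$, for which the two decisive points are the fractional truncation estimate $[v_{i,n}]\to[w_{i}]$ and the observation that $V$ and $W$ are constant on $M$.
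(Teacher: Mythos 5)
Your proof is correct and follows the paper's strategy in its essentials: contradiction setup, change of variable $z=x-y_{n}/\e_{n}$, observation that $H=Q$ on the support of $\Phi_{\e_{n}}(y_{n})$, reduction to the Nehari identity, and Dominated Convergence. There is, however, one genuine improvement worth highlighting. You explicitly isolate and prove the fact that $V\equiv V(x_{0})$ and $W\equiv W(x_{0})$ on $M$, via the Nehari-level comparison: a minimizer $(\bar u,\bar v)$ of $C(y)$ has $\int Q(\bar u,\bar v)\,dx>0$ by \eqref{2.1}, hence $\bar u\not\equiv 0$ and $\bar v\not\equiv 0$ (since the extension of $Q$ vanishes when either argument is nonpositive), and then $V(y)>V(x_{0})$ or $W(y)>W(x_{0})$ would strictly increase $\max_{t\geq 0}\J_{\cdot}(t\bar u,t\bar v)$, forcing $C(y)>C(x_{0})$. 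Without this observation one cannot conclude that $\int_{\R^{N}}V(\e_{n}z+y_{n})v_{1,n}^{2}\,dz\to V(x_{0})\|w_{1}\|_{L^{2}}^{2}$ (and likewise for $W$), only that it tends to $V(y)\|w_{1}\|_{L^{2}}^{2}$ for an accumulation point $y\in M$; yet the paper invokes the Dominated Convergence Theorem to claim $\|(\Psi_{1,\e_{n},y_{n}},\Psi_{2,\e_{n},y_{n}})\|_{\e_{n}}^{2}\to\|(w_{1},w_{2})\|_{x_{0}}^{2}$ without making this point explicit, so you are closing a small but real gap in the exposition. A secondary, organizational difference: once the two limits (of the norm and of the $Q$-integral) are in hand, you read off $t_{\e_{n}}\to 1$ directly from $t_{\e_{n}}^{p-2}=\|(\Psi_{1,\e_{n},y_{n}},\Psi_{2,\e_{n},y_{n}})\|_{\e_{n}}^{2}\big/\big(p\int Q(v_{1,n},v_{2,n})\,dz\big)$ together with $(w_{1},w_{2})\in\N_{x_{0}}$, which bypasses the paper's separate by-contradiction arguments that $t_{\e_{n}}$ stays bounded and bounded away from zero. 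Both arrangements are sound; yours is a bit more streamlined and self-contained.
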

\begin{proof}
Assume by contradiction that there there exist $\delta_{0}>0$, $\{y_{n}\}\subset M$ and $\e_{n}\rightarrow 0$ such that 
\begin{equation}\label{4.41}
|\J_{\e_{n}}(\Phi_{\e_{n}}(y_{n}))-C^{*}|\geq \delta_{0}.
\end{equation}
We first show that $\lim_{n\rightarrow \infty}t_{\e_{n}}<\infty$.
Let us observe that by using the change of variable $z=\frac{\e_{n}x-y_{n}}{\e_{n}}$, if $z\in B_{\frac{\delta}{\e_{n}}}$, it follows that $\e_{n} z\in B_{\delta}$ and then $\e_{n} z+y_{n}\in B_{\delta}(y_{n})\subset M_{\delta}\subset \Lambda$. 

Then, recalling that $H=Q$ on $\Lambda$ and $\psi(t)=0$ for $t\geq \delta$, we have
\begin{align}\label{HeZou}
\J_{\e_{n}}(\Phi_{\e_{n}}(y_{n}))&=\frac{t_{\e_{n}}^{2}}{2}\int_{\R^{N}} |(-\Delta)^{\frac{s}{2}}(\psi(|\e_{n} z|) w_{1}(z))|^{2}\, dz+\frac{t_{\e_{n}}^{2}}{2}\int_{\R^{N}} |(-\Delta)^{\frac{s}{2}}(\psi(|\e_{n} z|) w_{2}(z))|^{2}\, dz \nonumber\\
&+\frac{t_{\e_{n}}^{2}}{2}\int_{\R^{N}} V(\e_{n} z+y_{n}) (\psi(|\e_{n} z|) w_{1}(z))^{2}\, dz+\frac{t_{\e_{n}}^{2}}{2}\int_{\R^{N}} W(\e_{n} z+y_{n}) (\psi(|\e_{n} z|) w_{2}(z))^{2}\, dz \nonumber\\
&-\int_{\R^{N}} Q(t_{\e_{n}}\psi(|\e_{n} z|)w_{1}(z), t_{\e_{n}}\psi(|\e_{n} z|)w_{2}(z)) \, dz.
\end{align}
Now, let assume that $t_{\e_{n}}\rightarrow \infty$. By the definition of $t_{\e_{n}}$, $(Q1)$ and \eqref{2.1}, we get
\begin{equation}\label{3.9}
\|(\Psi_{1,\e_{n}, y_{n}}, \Psi_{2,\e_{n}, y_{n}})\|^{2}_{\e_{n}}=p t_{\e_{n}}^{p-2}\int_{\R^{N}} Q(\psi(|\e_{n} z|)w_{1}(z), \psi(|\e_{n} z|)w_{2}(z)) \, dz
\end{equation}
Since $\psi=1$ in $B_{\frac{\delta}{2}}$ and $B_{\frac{\delta}{2}}\subset B_{\frac{\delta}{2\e_{n}}}$ for $n$ big enough, and $w_{1}$, $w_{2}$ are continuous and positive in $\R^{N}$ we obtain
\begin{align}\label{3.10}
\|(\Psi_{1,\e_{n}, y_{n}}, \Psi_{2,\e_{n}, y_{n}})\|^{2}_{\e_{n}}\geq p t_{\e_{n}}^{p-2} \int_{B_{\frac{\delta}{2}}} Q(w_{1}(z),w_{2}(z)) \, dz\geq C_{\delta, p} t_{\e_{n}}^{p-2},
\end{align}
for some $C_{\delta, p}>0$.
Taking the limit as $n\rightarrow \infty$ in (\ref{3.10}) we can infer that
$$
\lim_{n\rightarrow \infty} \|(\Psi_{1,\e_{n}, y_{n}}, \Psi_{2,\e_{n}, y_{n}})\|^{2}_{\e_{n}}=\infty
$$
which is a contradiction because of
$$
\lim_{n\rightarrow \infty} \|(\Psi_{1,\e_{n}, y_{n}}, \Psi_{2,\e_{n}, y_{n}})\|^{2}_{\e_{n}}=\|(w_{1}, w_{2})\|^{2}_{x_{0}}\in (0, \infty)
$$
in view of the dominated convergence theorem.\\
Thus, $\{t_{\e_{n}}\}$ is bounded and we can assume that $t_{\e_{n}}\rightarrow t_{0}\geq 0$. Clearly, if $t_{0}=0$, by limitation of $\|(\Psi_{1,\e_{n}, y_{n}}, \Psi_{2,\e_{n}, y_{n}})\|^{2}_{\e_{n}}$, the growth assumptions on $Q$, and (\ref{3.9}), we can deduce that $\|(\Psi_{1,\e_{n}, y_{n}}, \Psi_{2,\e_{n}, y_{n}})\|^{2}_{\e_{n}}\rightarrow 0$, which is impossible. Hence, $t_{0}>0$.

Now, using $(Q2)$ and the dominated convergence theorem we can see that as $n\rightarrow \infty$
$$
\int_{\R^{N}} Q(\Psi_{1, \e_{n}, y_{n}}, \Psi_{2, \e_{n}, y_{n}}) dx\rightarrow \int_{\R^{N}} Q(w_{1}, w_{2}) \, dx.
$$
Then, taking the limit as $n\rightarrow \infty$ in (\ref{3.9})  we obtain
$$
\|(w_{1}, w_{2})\|^{2}_{x_{0}}=p t_{0}^{p-2} \int_{\R^{N}} Q(w_{1}, w_{2}) \, dx.
$$ 
In light of $(w_{1}, w_{2})\in \mathcal{N}_{x_{0}}$, we deduce that $t_{0}=1$. Moreover, from \eqref{HeZou}, we have
$$
\lim_{n\rightarrow \infty} \J_{\e}(\Phi_{\e_{n}}(y_{n}))=\J_{x_{0}}(w_{1}, w_{2})=C^{*},
$$
which contradicts (\ref{4.41}).

\end{proof}

\noindent
At this point, we are in the position to define the barycenter map. We take $\rho=\rho_{\delta}>0$ such that $M_{\delta}\subset B_{\rho}$, and we consider $\varUpsilon: \R^{N}\rightarrow \R^{N}$ given by 
 \begin{equation*}
 \varUpsilon(x)=
 \left\{
 \begin{array}{ll}
 x &\mbox{ if } |x|<\rho \\
 \frac{\rho x}{|x|} &\mbox{ if } |x|\geq \rho.
  \end{array}
 \right.
 \end{equation*}
We define the barycenter map $\beta_{\e}: \N_{\e}\rightarrow \R^{N}$ as follows
\begin{align*}
\beta_{\e}(u, v)=\frac{\int_{\R^{N}} \varUpsilon(\e x)(u^{2}(x)+v^{2}(x)) dx}{\int_{\R^{N}} (u^{2}(x)+v^{2}(x)) dx}.
\end{align*}

\begin{lem}\label{lemma3.5FS}
The functional $\Phi_{\e}$ satisfies the following limit
\begin{equation}\label{3.3}
\lim_{\e \rightarrow 0} \beta_{\e}(\Phi_{\e}(y))=y \mbox{ uniformly in } y\in M.
\end{equation}
\end{lem}
\begin{proof}
Suppose by contradiction that there exist $\delta_{0}>0$, $\{y_{n}\}\subset M$ and $\e_{n}\rightarrow 0$ such that 
\begin{equation}\label{4.4}
|\beta_{\e_{n}}(\Phi_{\e_{n}}(y_{n}))-y_{n}|\geq \delta_{0}.
\end{equation}
Using the definitions of $\Phi_{\e_{n}}(y_{n})$, $\beta_{\e_{n}}$, $\eta$ and the change of variable $z=\frac{\e_{n} x-y_{n}}{\e_{n}}$, we can see that 
$$
\beta_{\e_{n}}(\Phi_{\e_{n}}(y_{n}))=y_{n}+\frac{\int_{\R^{N}}[\Upsilon(\e_{n}z+y_{n})-y_{n}] |\eta(|\e_{n}z|)|^{2} (|w_{1}(z)|^{2}+|w_{2}(z)|^{2}) \, dz}{\int_{\R^{N}} |\eta(|\e_{n}z|)|^{2} (|w_{1}(z)|^{2}+|w_{2}(z)|^{2})\, dz}.
$$
Taking into account that $\{y_{n}\}\subset M\subset B_{\rho}$ and applying the dominated convergence theorem we can infer that 
$$
|\beta_{\e_{n}}(\Phi_{\e_{n}}(y_{n}))-y_{n}|=o_{n}(1)
$$
which contradicts (\ref{4.4}).
\end{proof}

\noindent
We now introduce a subset $\widetilde{\N}_{\e}$ of $\N_{\e}$ by taking a function $h:\R_{+}\rightarrow \R_{+}$ such that $h(\e)\rightarrow 0$ as $\e \rightarrow 0$, and setting
$$
\widetilde{\N}_{\e}=\{(u, v)\in \N_{\e}: \J_{\e}(u, v)\leq C^{*}+h(\e)\}.
$$
Fixed $y\in M$, we conclude from Lemma \ref{lemma3.4FS} that $h(\e)=|\J_{\e}(\Phi_{\e}(y))-C^{*}|\rightarrow 0$ as $\e \rightarrow 0$. Hence, $\Phi_{\e}(y)\in \widetilde{\N}_{\e}$ and $\widetilde{\N}_{\e}\neq \emptyset$ for any $\e>0$ small. Moreover, we have the following interesting relation between $\widetilde{\N}_{\e}$ and $\beta_{\e}$.
\begin{lem}\label{lemma3.7FS}
For any $\delta>0$, there holds that
$$
\lim_{\e \rightarrow 0} \sup_{(u, v)\in \widetilde{\mathcal{N}}_{\e}} dist(\beta_{\e}(u, v), M_{\delta})=0.
$$
\end{lem}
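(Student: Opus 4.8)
The statement follows a now-classical scheme (Benci–Cerami / Alves): I argue by contradiction, extracting from an ill-behaved family in $\widetilde{\N}_{\e}$ a minimizing sequence for $C^{*}$, applying the concentration lemma already proved, and then reading off that the barycenters approach $M$. Concretely, suppose the claim fails. Then there exist $\delta>0$, a sequence $\e_{n}\to 0^{+}$, and $(u_{n},v_{n})\in\widetilde{\N}_{\e_{n}}$ such that
\[
\inf_{y\in M_{\delta}}|\beta_{\e_{n}}(u_{n},v_{n})-y|\geq\delta\qquad\text{for all }n.
\]
Since $(u_{n},v_{n})\in\N_{\e_{n}}$ we have $C^{*}\leq C(x_{0})\leq \J_{\e_{n}}(u_{n},v_{n})$ modulo the usual comparison (the penalized functional dominates an autonomous one with potentials $\geq V(x_{0}),W(x_{0})$, and by $(H3)$ the corresponding mountain-pass level is $\geq C^{*}$); combined with the defining inequality $\J_{\e_{n}}(u_{n},v_{n})\leq C^{*}+h(\e_{n})$ and $h(\e_{n})\to 0$, this gives $\J_{\e_{n}}(u_{n},v_{n})\to C^{*}$.

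\textbf{Applying the concentration lemma.} Now $\{(u_{n},v_{n})\}\subset\N_{\e_{n}}$ with $\J_{\e_{n}}(u_{n},v_{n})\to C^{*}$, so Lemma \ref{lem3.1} applies: there is $\{\tilde y_{n}\}\subset\R^{N}$ such that $(\tilde u_{n},\tilde v_{n}):=(u_{n}(\cdot+\tilde y_{n}),v_{n}(\cdot+\tilde y_{n}))$ converges (up to a subsequence) in $\X_{0}$ to some $(\tilde u,\tilde v)\neq(0,0)$, and $y_{n}:=\e_{n}\tilde y_{n}\to y_{0}\in M$. The plan is then to compute $\beta_{\e_{n}}(u_{n},v_{n})$ after the change of variables $x\mapsto z=x-\tilde y_{n}$:
\[
\beta_{\e_{n}}(u_{n},v_{n})=\frac{\int_{\R^{N}}\varUpsilon(\e_{n}z+y_{n})\,(\tilde u_{n}^{2}(z)+\tilde v_{n}^{2}(z))\,dz}{\int_{\R^{N}}(\tilde u_{n}^{2}(z)+\tilde v_{n}^{2}(z))\,dz}.
\]
Since $\e_{n}z+y_{n}\to y_{0}$ pointwise in $z$, $\varUpsilon$ is continuous (and bounded), and $\tilde u_{n}^{2}+\tilde v_{n}^{2}\to\tilde u^{2}+\tilde v^{2}$ in $L^{1}(\R^{N})$ with nonzero limit, the Dominated Convergence Theorem yields $\beta_{\e_{n}}(u_{n},v_{n})\to\varUpsilon(y_{0})=y_{0}$ (the last equality because $y_{0}\in M\subset M_{\delta}\subset B_{\rho}$, so $\varUpsilon$ is the identity there). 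Hence $\beta_{\e_{n}}(u_{n},v_{n})\to y_{0}\in M\subset M_{\delta}$, so $\mathrm{dist}(\beta_{\e_{n}}(u_{n},v_{n}),M_{\delta})\to 0$, contradicting the assumption.

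\textbf{Where the work is.} Two points require care. First, obtaining $\J_{\e_{n}}(u_{n},v_{n})\to C^{*}$: one must be sure the lower bound $\J_{\e_{n}}(u_{n},v_{n})\geq C^{*}+o(1)$ really holds for every element of $\widetilde{\N}_{\e}$, not just along special sequences; this uses the penalization inequalities $(2.4A)$–$(2.7A)$ together with the fact that, restricted to $\Lambda_{\e}$, the modified problem has the genuine nonlinearity, plus the comparison with $\J_{\xi}$ for $\xi$ near the minimum — exactly the estimate that makes Lemma \ref{lem3.1} run, so in practice this is subsumed in citing that lemma. Second, the $L^{1}$-convergence of $\tilde u_{n}^{2}+\tilde v_{n}^{2}$: this is immediate from strong convergence in $\X_{0}$ and the continuous embedding $H^{s}(\R^{N})\hookrightarrow L^{2}(\R^{N})$, hence $\tilde u_{n}\to\tilde u$, $\tilde v_{n}\to\tilde v$ in $L^{2}(\R^{N})$, whence $\tilde u_{n}^{2}\to\tilde u^{2}$, $\tilde v_{n}^{2}\to\tilde v^{2}$ in $L^{1}(\R^{N})$. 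The only genuine subtlety — the main obstacle — is the uniformity over $\widetilde{\N}_{\e}$: because the supremum is over all of $\widetilde{\N}_{\e}$, the contradiction argument must start from an arbitrary sequence realizing the supremum up to $o(1)$, and one has to know that $\sup_{(u,v)\in\widetilde{\N}_{\e}}\mathrm{dist}(\beta_{\e}(u,v),M_{\delta})$ is in fact attained or approached by a sequence to which Lemma \ref{lem3.1} applies; this is handled by choosing $(u_{n},v_{n})\in\widetilde{\N}_{\e_{n}}$ with $\mathrm{dist}(\beta_{\e_{n}}(u_{n},v_{n}),M_{\delta})\geq\sup_{(u,v)\in\widetilde{\N}_{\e_{n}}}\mathrm{dist}(\beta_{\e_{n}}(u,v),M_{\delta})-\tfrac1n$ and noting any such sequence still lies in $\N_{\e_{n}}$ with energy tending to $C^{*}$, so the hypotheses of Lemma \ref{lem3.1} are met and the argument above closes.
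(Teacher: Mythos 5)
Your proof is correct and follows essentially the same approach as the paper: choose a sequence $(u_{n},v_{n})\in\widetilde{\N}_{\e_{n}}$ realizing the supremum up to $o_{n}(1)$, note that $C^{*}\leq c_{\e_{n}}\leq\J_{\e_{n}}(u_{n},v_{n})\leq C^{*}+h(\e_{n})$ so the energies tend to $C^{*}$, invoke Lemma~\ref{lem3.1} to obtain translations $\tilde y_{n}$ with $y_{n}=\e_{n}\tilde y_{n}\to y\in M$ and $(\tilde u_{n},\tilde v_{n})\to(\tilde u,\tilde v)\neq 0$ in $\X_{0}$, and finally change variables in $\beta_{\e_{n}}$ and pass to the limit to conclude $\beta_{\e_{n}}(u_{n},v_{n})\to y\in M\subset M_{\delta}$. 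The only cosmetic difference is your contradiction framing (versus the paper's direct one), and you could state the lower bound simply as $C^{*}\leq c_{\e_{n}}$ rather than the slightly muddled parenthetical comparison, but the content is identical.
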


\begin{proof}
Let $\e_{n}\rightarrow 0$ as $n\rightarrow \infty$. For any $n\in \mathbb{N}$, there exists $(u_{n}, v_{n})\in \widetilde{\N}_{\e_{n}}$ such that
$$
\sup_{(u, v)\in \widetilde{\N}_{\e_{n}}} \inf_{y\in M_{\delta}}|\beta_{\e_{n}}(u, v)-y|=\inf_{y\in M_{\delta}}|\beta_{\e_{n}}(u_{n}, v_{n})-y|+o_{n}(1).
$$
Therefore, it is suffices to find a sequence $\{y_{n}\}\subset M_{\delta}$ such that 
\begin{equation}\label{3.13}
\lim_{n\rightarrow \infty} |\beta_{\e_{n}}(u_{n}, v_{n})-y_{n}|=0.
\end{equation}
We note that $\{(u_{n}, v_{n})\}\subset  \widetilde{\N}_{\e_{n}}\subset  \N_{\e_{n}}$, from which we obtain that
$$
C^{*}\leq c_{\e_{n}}\leq \J_{\e_{n}}(u_{n}, v_{n})\leq C^{*}+h(\e_{n}).
$$
This yields that $\J_{\e_{n}}(u_{n}, v_{n})\rightarrow C^{*}$. Using Lemma \ref{lem3.1}, there exists $\{\tilde{y}_{n}\}\subset \R^{N}$ such that $y_{n}=\e_{n}\tilde{y}_{n}\in M_{\delta}$ for $n$ sufficiently large. Setting $(\tilde{u}_{n}(x), \tilde{v}_{n}(x))=(u_{n}(\cdot+\tilde{y}_{n}), v_{n}(\cdot+\tilde{y}_{n}))$, we can see that
$$
\beta_{\e_{n}}(u_{n}, v_{n})=y_{n}+\frac{\int_{\R^{N}}[\Upsilon(\e_{n}x+y_{n})-y_{n}] (\tilde{u}_{n}^{2}+\tilde{v}_{n}^{2}) \, dx}{\int_{\R^{N}} (\tilde{u}_{n}^{2}+\tilde{v}_{n}^{2})\, dx}.
$$
Since $(\tilde{u}_{n}, \tilde{v}_{n})\rightarrow (u, v)$ in $\X_{0}$ and $\e_{n}x+y_{n}\rightarrow y\in M_{\delta}$, we deduce that $\beta_{\e_{n}}(u_{n}, v_{n})=y_{n}+o_{n}(1)$, that is (\ref{3.13}) holds.

\end{proof}

\noindent
Now, we are ready to present the proof of the multiplicity result for \eqref{P''}.
\begin{thm}\label{thm3.1AFF}
For any $\delta>0$ satisfying $M_{\delta}\subset M$, there exists $\e_{\delta}>0$ such that for any $\e\in (0, \e_{\delta})$, problem \eqref{P''} has at least $cat_{M_{\delta}}(M)$ positive solutions.
\end{thm}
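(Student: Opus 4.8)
The plan is to use the abstract Ljusternik–Schnirelmann multiplicity machinery due to Benci–Cerami, using the barycenter map to transfer topological information from $M$ to the sublevel set $\widetilde{\N}_{\e}$. The key ingredients are all prepared in the preceding lemmas: the $(PS)_{c}$ condition for $\J_{\e}$ restricted to $\N_{\e}$ (Proposition \ref{prop1}), the maps $\Phi_{\e}:M\to\N_{\e}$ and $\beta_{\e}:\N_{\e}\to\R^{N}$, and the three limits \eqref{3.2}, \eqref{3.3}, and Lemma \ref{lemma3.7FS}.

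First I would fix $\delta>0$ with $M_{\delta}\subset\Lambda$ and recall the elementary topological fact: if $h_{1}:M\to A$, $h_{2}:A\to M_{\delta}$ are continuous with $h_{2}\circ h_{1}$ homotopic to the inclusion $M\hookrightarrow M_{\delta}$, then $cat_{A}(A)\geq cat_{M_{\delta}}(M)$. I would apply this with $A=\widetilde{\N}_{\e}$, $h_{1}=\Phi_{\e}$, $h_{2}=\beta_{\e}$. By Lemma \ref{lemma3.4FS}, for $\e$ small enough $\Phi_{\e}(M)\subset\widetilde{\N}_{\e}$, so $h_{1}$ is well-defined and continuous. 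By Lemma \ref{lemma3.7FS} together with Lemma \ref{lemma3.5FS}, for $\e$ small the composition $\beta_{\e}\circ\Phi_{\e}$ maps $M$ into $M_{\delta}$ and is uniformly close to the identity on $M$; since $M\subset M_{\delta}$ and $M_{\delta}$ is (for the relevant $\delta$) contained in a region where a straight-line homotopy makes sense, $\beta_{\e}\circ\Phi_{\e}$ is homotopic to the inclusion via $H(t,y)=(1-t)y+t\,\beta_{\e}(\Phi_{\e}(y))$, which stays in $M_{\delta}$ for $\e$ small. Hence $cat_{\widetilde{\N}_{\e}}(\widetilde{\N}_{\e})\geq cat_{M_{\delta}}(M)$.

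Next I would invoke the Ljusternik–Schnirelmann theorem for the $C^{1}$ functional $\J_{\e}$ constrained to the complete $C^{1}$-manifold $\N_{\e}$. The $(PS)_{c}$ condition holds for every $c$ by Proposition \ref{prop1}; moreover $\J_{\e}$ is bounded below on $\N_{\e}$ (it equals $(\frac12-\frac1p)\|(u,v)\|_{\e}^{2}+\text{nonneg.\ outer terms}$ up to the modification, so $\geq c_{\e}>0$) and $\widetilde{\N}_{\e}=\J_{\e}^{C^{*}+h(\e)}$ is a sublevel set. Standard deformation arguments then give that $\J_{\e}$ restricted to $\N_{\e}$ has at least $cat_{\widetilde{\N}_{\e}}(\widetilde{\N}_{\e})$ critical points in $\widetilde{\N}_{\e}$. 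Combined with the previous paragraph, this yields at least $cat_{M_{\delta}}(M)$ critical points of $\J_{\e}|_{\N_{\e}}$. Finally, since $\N_{\e}$ is a natural constraint (critical points of $\J_{\e}|_{\N_{\e}}$ are free critical points of $\J_{\e}$, using $\langle\phi_{\e}'(u,v),(u,v)\rangle\leq-b<0$ from Lemma \ref{lemma2.3} to rule out the Lagrange multiplier), these are genuine solutions of \eqref{P''}, and by the sign lemma they are nonnegative; positivity then follows from the strong maximum principle for $(-\Delta)^{s}$ and the fact that $(u,v)\neq(0,0)$.

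The main obstacle, and the only point that requires care beyond bookkeeping, is verifying that the homotopy $\beta_{\e}\circ\Phi_{\e}\simeq\mathrm{id}_{M}$ takes values in $M_{\delta}$ rather than escaping it: this is exactly where Lemmas \ref{lemma3.5FS} and \ref{lemma3.7FS} must be combined with a choice of $\e_{\delta}$ small enough that the $o_{n}(1)$ errors are below $\delta$, uniformly in $y\in M$. One should also be slightly careful that the statement's hypothesis reads $M_{\delta}\subset M$ (presumably a typo for $M_{\delta}\subset\Lambda$, since $M\subset M_{\delta}$ always); under the intended reading $M_{\delta}\subset\Lambda$, all the category comparisons and the natural-constraint argument go through as above, and the proof is complete.
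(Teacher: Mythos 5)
Your proposal is correct and follows essentially the same route as the paper: you set up the diagram $M \xrightarrow{\Phi_{\e}} \widetilde{\N}_{\e} \xrightarrow{\beta_{\e}} M_{\delta}$, use Lemmas \ref{lemma3.4FS}, \ref{lemma3.5FS}, \ref{lemma3.7FS} to get $\beta_{\e}\circ\Phi_{\e}$ homotopic to the inclusion and hence $cat_{\widetilde{\N}_{\e}}(\widetilde{\N}_{\e})\geq cat_{M_{\delta}}(M)$ by the Benci--Cerami argument, invoke Ljusternik--Schnirelmann theory together with the $(PS)_{c}$ condition from Proposition \ref{prop1}, and then remove the Lagrange multiplier via Lemma \ref{lemma2.3}. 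You are also right that the hypothesis ``$M_{\delta}\subset M$'' in the statement is a typo for ``$M_{\delta}\subset\Lambda$''; the paper's own proof opens with that corrected hypothesis.
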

\begin{proof}
Given $\delta>0$ such that $M_{\delta}\subset \Lambda$, we can apply Lemma \ref{lemma3.4FS}, Lemma \ref{lemma3.5FS} and Lemma \ref{lemma3.7FS} to find $\e_{\delta}>0$ such that for any $\e\in (0, \e_{\delta})$, the following diagram
$$
M \stackrel{\Phi_{\e}}{\rightarrow}  \widetilde{\N}_{\e} \stackrel{\beta_{\e}}{\rightarrow} M_{\delta}
$$
is well-defined and $\beta_{\e}\circ \Phi_{\e}$ is  homotopically equivalent to the embedding $\iota: M\rightarrow M_{\delta}$. 
Using the definition of $\widetilde{\N}_{\e}$ and taking $\e_{\delta}$ sufficiently small, we may assume that $\J_{\e}$ fulfills the Palais-Smale condition in $\widetilde{\N}_{\e}$ (see Proposition \ref{prop1}). Therefore, standard Ljusternik-Schnirelmann theory \cite{W} provides at least $cat_{\widetilde{\N}_{\e}}(\widetilde{\N}_{\e})$ critical points $(u_{i}, v_{i}):=(u^{i}_{\e}, v^{i}_{\e})$ of $\J_{\e}$ restricted to $\N_{\e}$. Using the arguments in \cite{BC}, we know that $cat_{\widetilde{\N}_{\e}}(\widetilde{\N}_{\e})\geq cat_{M_{\delta}}(M)$. 
Then, arguing as in the proof of Proposition \ref{prop1}, we can see that $(u_{i}, v_{i})$ is also a critical point of the unconstrained functional and therefore a solution of problem \eqref{P''}.

\end{proof}

\section{proof of theorem \ref{thm1}}

In this last section we provide the proof of our main result.
\begin{proof}
Take $\delta>0$ sufficiently small such that $M_\delta \subset \Lambda$. We begin by proving that there exists $\tilde{\e}_{\delta}>0$ such that for any $\e \in (0, \tilde{\e}_{\delta})$ and any solution $u_{\e} \in \widetilde{\N}_{\e}$ of \eqref{P''} it holds 
\begin{equation}\label{infty}
\|(u_{\e}, v_{\e})\|_{L^{\infty}(\R^{N}\setminus \Lambda_{\e})}<a. 
\end{equation}
Assume by contradiction that there exist $\e_{n}\rightarrow 0$, $(u_{\e_{n}}, v_{\e_{n}})\in \widetilde{\mathcal{N}}_{\e_{n}}$ such that $\J'_{\e_{n}}(u_{\e_{n}}, v_{\e_{n}})=0$ and $\|(u_{\e_{n}}, v_{\e_{n}})\|_{L^{\infty}(\R^{N}\setminus \Lambda_{\e_{n}})}\geq a$. 
Since $\J_{\e_{n}}(u_{\e_{n}}, v_{\e_{n}}) \leq C^{*} + h(\e_{n})$ and $h(\e_{n})\rightarrow 0$, we can argue as in the  first part of the proof of Lemma \ref{lem3.1}, to deduce that $\J_{\e_{n}}(u_{\e_{n}}, v_{\e_{n}})\rightarrow C^{*}$.
Then, invoking Lemma \ref{lem3.1}, we can find $\{\tilde{y}_{n}\}\subset \R^{N}$ such that $\e_{n}\tilde{y}_{n}\rightarrow y \in M$. \\
Now, if we choose $r>0$ such that $B_{r}(y)\subset B_{2r}(y)\subset \Lambda$, we have $B_{\frac{r}{\e_{n}}}(\frac{y}{\e_{n}})\subset \Lambda_{\e_{n}}$. In particular, for any $z\in B_{\frac{r}{\e_{n}}}(\tilde{y}_{n})$ there holds
\begin{equation*}
\left|z - \frac{y}{\e_{n}}\right| \leq |z-\tilde{y}_{n}|+ \left|\tilde{y}_{n} - \frac{y}{\e_{n}}\right|<\frac{2r}{\e_{n}}\, \mbox{ for } n \mbox{ sufficiently large. }
\end{equation*}
Therefore $\R^{N}\setminus \Lambda_{\e_{n}}\subset \R^{N} \setminus B_{\frac{r}{\e_{n}}}(\tilde{y}_{n})$ for any $n$ big enough.  \\
Now, let us denote by $(\tilde{u}_{n}(x), \tilde{v}_{n}(x))=(u_{\e_{n}}(x+ \tilde{y}_{n}), v_{\e_{n}}(x+ \tilde{y}_{n}))$ and $\tilde{z}_{n}=\tilde{u}_{n}+\tilde{v}_{n}\geq 0$. 
Using $(H3)$, the definition of $H$ and the growth conditions on $Q$, we can see that $\tilde{z}_{n}$ satisfies
\begin{equation}\label{LMS}
(-\Delta)^{s} \tilde{z}_{n}+\alpha \tilde{z}_{n}\leq g_{n} \mbox{ in } \R^{N}, 
\end{equation}
where $\alpha=\min\{V(x_{0}), W(x_{0})\}$ and $g_{n}$ is such that $|g_{n}|\leq \xi \tilde{z}_{n}+C_{\xi} \tilde{z}_{n}^{p-1}$, with $\xi>0$ fixed. \\ 
Then, for $\beta>0$ and $L>1$, we take $\tilde{z}_{n}\tilde{z}_{L, n}^{2(\beta-1)}$, where $\tilde{z}_{L, n}=\min\{\tilde{z}_{n}, L\}$, as test function in \eqref{LMS}, and arguing as in the proof of Lemma $6.1$ in \cite{AI} (see also Lemma $5.1$ in \cite{A5}) and observing that $\{\tilde{z}_{n}\}$ is bounded in $L^{2^{*}_{s}}(\R^{N})$ (since $\{(u_{\e_{n}}, v_{\e_{n}})\}$ is bounded in $\X_{\e_{n}}$), we can use a Moser iteration scheme to deduce that $\tilde{z}_{n}\in L^{\infty}(\R^{N})$ and there exists a constant $K>0$ such that  
$$
\|\tilde{z}_{n}\|_{L^{\infty}(\R^{N})}\leq K \mbox{ for any } n\in \mathbb{N}.
$$
Consequently, $\{\tilde{u}_{n}\}$ and $\{\tilde{v}_{n}\}$ are bounded in $L^{\infty}(\R^{N})$, and by interpolation,
$\tilde{u}_{n}\rightarrow u$  and $\tilde{v}_{n}\rightarrow v$ in $L^{q}(\R^{N})$ for any $q\in (2, \infty)$, for some $u, v\in L^{q}(\R^{N})$ for any $q\in (2, \infty)$.\\
Then, from the growth conditions on $Q$, we also have the following relations of limit in $L^{q}(\R^{N})$ for any $q\in (2, \infty)$:
$$
H_{u}(\e_{n}x+\e_{n}\tilde{y}_{n}, \tilde{u}_{n}, \tilde{v}_{n})\rightarrow Q_{u}(u, v)
$$
and 
$$
H_{v}(\e_{n}x+\e_{n}\tilde{y}_{n}, \tilde{u}_{n}, \tilde{v}_{n})\rightarrow Q_{v}(u, v).
 $$ 
Since $\tilde{z}_{n}$ satisfies 
$$
(-\Delta)^{s} \tilde{z}_{n}+\tilde{z}_{n}=\xi_{n} \mbox{ in } \R^{N},
$$
where 
\begin{align*}
\xi_{n}&:=H_{u}(\e_{n}x+\e_{n}\tilde{y}_{n}, \tilde{u}_{n}, \tilde{v}_{n})+H_{v}(\e_{n}x+\e_{n}\tilde{y}_{n}, \tilde{u}_{n}, \tilde{v}_{n}) \\
&-V(\e_{n}x+\e_{n}\tilde{y}_{n}) \tilde{u}_{n}-W(\e_{n}x+\e_{n}\tilde{y}_{n})\tilde{v}_{n}+\tilde{z}_{n},
\end{align*}
we have that
$$
\xi_{n}\rightarrow Q_{u}(u, v)+Q_{v}(u, v)-V(y)u-W(y) v+z \mbox{ in } L^{q}(\R^{N})
$$
for any $q\in [2, \infty)$, and we can find $K_{1}>0$ such that 
$$
\|\xi_{n}\|_{L^{\infty}(\R^{N})}\leq K_{1} \mbox{ for any } n\in \mathbb{N}.
$$
Hence $\tilde{z}_{n}(x)=(\mathcal{K}*\xi_{n})(x)=\int_{\R^{N}} \mathcal{K}(x-t) \xi_{n}(t) \, dt$, where $\mathcal{K}$ is the Bessel kernel which satisfies the following properties (see \cite{FQT}):
\begin{compactenum}[$(i)$]
\item $\mathcal{K}$ is positive, radially symmetric and smooth in $\R^{N}\setminus \{0\}$,
\item there is $C>0$ such that $\mathcal{K}(x)\leq \frac{C}{|x|^{N+2s}}$ for any $x\in \R^{N}\setminus \{0\}$,
\item $\mathcal{K}\in L^{q}(\R^{N})$ for any $q\in [1, \frac{N}{N-2s})$.
\end{compactenum}
Then, arguing as in Lemma $2.6$ in \cite{AM}, we can see that 
\begin{equation}\label{AM3}
\tilde{z}_{n}(x)\rightarrow 0 \mbox{ as } |x|\rightarrow \infty
\end{equation}
uniformly in $n\in \mathbb{N}$.\\
Therefore, there exists $R>0$ such that 
$$
|(\tilde{u}_{n}(x), \tilde{v}_{n}(x))|<a \quad \mbox{ for all } |x|\geq R \mbox{ and } n\in \mathbb{N},
$$
from which
$$
|(u_{\e_{n}}(x), v_{\e_{n}}(x))|<a \quad \mbox{ for all } x\in \R^{N}\setminus B_{R}(\tilde{y}_{n})  \mbox{ and } n\in \mathbb{N}.
$$
On the other hand, there exists $\nu \in \mathbb{N}$ such that for any $n\geq \nu$ and $\frac{r}{\e_{n}}>R$, it holds 
 $$
 \R^{N}\setminus \Lambda_{\e_{n}}\subset \R^{N} \setminus B_{\frac{r}{\e_{n}}}(\tilde{y}_{n})\subset \R^{N}\setminus B_{R}(\tilde{y}_{n}),
 $$
which gives $|(u_{\e_{n}}(x), v_{\e_{n}}(x))|<a$ for any $x\in \R^{N}\setminus \Lambda_{\e_{n}}$, that is a contradiction. \\
Now, let $\bar{\e}_{\delta}$ be given by Theorem \ref{thm3.1AFF} and take $\e_{\delta}= \min \{\tilde{\e}_{\delta}, \bar{\e}_{\delta}\}$. Fix $\e \in (0, \e_{\delta})$. By Theorem \ref{thm3.1AFF} we know that problem \eqref{P''} admits $cat_{M_{\delta}}(M)$ nontrivial solutions $(u_{\e}, v_{\e})$. Since $(u_{\e}, v_{\e})\in \widetilde{\mathcal{N}}_{\e}$ satisfies \eqref{infty}, by the definitions of $H$ and $\hat{Q}$ it follows that $(u_{\e}, v_{\e})$ is a solution of \eqref{P'}. In light of $(Q6)$ and the maximum principle for the fractional Laplacian \cite{CabSir}, we can infer that $u_{\e}, v_{\e}>0$ in $\R^{N}$.

Now, we study the behavior of maximum points of solutions to \eqref{P}.\\
Let $\e_{n}\rightarrow 0$ and take $\{(u_{\e_{n}}, v_{\e_{n}})\}\subset \X_{\e_{n}}$ be a sequence of solutions to \eqref{P''} as above.
Using the definition of $H$ and $(Q2)$ we can see that there exists $\bar{a}\in (0, a)$ sufficiently small such that
\begin{equation}\label{HZnew}
uH_{u}(\e_{n} x, u, v)+vH_{v}(\e_{n} x, u, v)\leq \frac{\alpha}{2}(u^{2}+v^{2}) \mbox{ for all } x\in \R^{N}, |(u, v)|\leq \bar{a}.
\end{equation}
Arguing as before, we can find $R>0$ such that 
\begin{equation}\label{HZnnew}
\|(u_{\e_{n}}, v_{\e_{n}})\|_{L^{\infty}(B_{R}^{c}(\tilde{y}_{n}))}< \bar{a}.
\end{equation}
Up to a subsequence, we may assume that 
\begin{equation}\label{TV1}
\|(u_{\e_{n}}, v_{\e_{n}})\|_{L^{\infty}(B_{R}(\tilde{y}_{n}))}\geq \bar{a}.
\end{equation}
Indeed, if this case does not occur, we deduce that $\|(u_{\e_{n}}, v_{\e_{n}})\|_{L^{\infty}(\R^{N})}< \bar{a}$ and using the facts that $\langle \J'_{\e_{n}}(u_{\e_{n}}, v_{\e_{n}}),(u_{\e_{n}}, v_{\e_{n}}) \rangle=0$ and \eqref{HZnew} we get
\begin{align*}
\|(u_{\e_{n}}, v_{\e_{n}}) \|^{2}_{\e_{n}}=\int_{\R^{N}} u_{\e_{n}} H_{u}(\e_{n} x, u_{\e_{n}}, v_{\e_{n}})+v_{\e_{n}} H_{v}(\e_{n} x, u_{\e_{n}}, v_{\e_{n}}) \, dx\leq \frac{\alpha}{2}\int_{\R^{N}} (u_{\e_{n}}^{2}+ v_{\e_{n}}^{2})\, dx,
\end{align*}
which implies that $\| (u_{\e_{n}}, v_{\e_{n}})\|_{\e_{n}}\rightarrow 0$ as $n\rightarrow \infty$, that is a contradiction. Then \eqref{TV1} holds.
Therefore, if we denote by $x_{n}$ and $\bar{x}_{n}$ the  maximum points of $u_{\e_{n}}$ and $v_{\e_{n}}$ respectively, it follows from \eqref{HZnnew} and \eqref{TV1} that $x_{n}=\tilde{y}_{n}+p_{n}$ and $\bar{x}_{n}=\tilde{y}_{n}+q_{n}$ for some $p_{n}, q_{n}\in B_{R}$. \\
Set $\hat{u}_{n}(x)=u_{\e_{n}}(x/\e_{n})$ and $\hat{v}_{n}(x)=v_{\e_{n}}(x/\e_{n})$.
Then $\hat{u}_{n}$ and $\hat{v}_{n}$ are solutions to \eqref{P} with maximum points $P_{n}:=\e_{n}\tilde{y}_{n}+\e_{n}p_{n}$ and $Q_{n}:=\e_{n}\tilde{y}_{n}+\e_{n}q_{n}$ respectively. Since $|p_{n}|, |q_{n}|<R$ for all $n\in \mathbb{N}$ and $\e_{n}\tilde{y}_{n}\rightarrow y\in M$ we can infer that $P_{n}, Q_{n} \rightarrow y$. 
By using Lemma \ref{C0} we obtain
$$
\lim_{n\rightarrow \infty} C(P_{n})=\lim_{n\rightarrow \infty} C(Q_{n})=C(y)=C^{*}=C(x_{0}).
$$
Finally, we study the decay  properties of $(\hat{u}_{n}, \hat{v}_{n})$ and we prove that \eqref{DEuv} holds. 

Let us define $\tilde{z}_{n}(x)=\tilde{u}_{n}(x)+\tilde{v}_{n}(x)$.
By (\ref{AM3}) it follows that $\tilde{z}_{n}\rightarrow 0$ as $|x|\rightarrow \infty$ uniformly in $n$.
We recall that $(Q2)$ gives
$$
|H_{u}|+|H_{v}|= o(|(u, v)|) \mbox{ as } |(u, v)|\rightarrow 0.
$$
Then, setting $V_{n}:=V(\e_{n}x+\e_{n}\tilde{y}_{n})$, $W_{n}:=W(\e_{n}x+\e_{n}\tilde{y}_{n})$, and using $(H3)$, $\alpha=\min\{V(x_{0}), W(x_{0})\}$, $\sqrt{x^{2}+y^{2}}\leq x+y$ for any $x, y\geq 0$, we can find $R_{1}>0$ sufficiently large such that
\begin{align}\label{HZ3}
&(-\Delta)^{s} \tilde{z}_{n}+\frac{\alpha}{2} \tilde{z}_{n}\nonumber \\
&=(-\Delta)^{s} \tilde{u}_{n}+(-\Delta)^{s} \tilde{v}_{n}+V_{n} \tilde{u}_{n}+W_{n} \tilde{v}_{n}-\left(V_{n} \tilde{u}_{n}+W_{n} \tilde{v}_{n}-\frac{\alpha}{2} \tilde{z}_{n} \right) \nonumber \\
&=H_{u}(\e_{n}x+\e_{n}\tilde{y}_{n}, \tilde{u}_{n}, \tilde{v}_{n})+H_{v}(\e_{n}x+\e_{n}\tilde{y}_{n}, \tilde{u}_{n}, \tilde{v}_{n}) \nonumber \\
&-\left(V_{n} \tilde{u}_{n}+W_{n} \tilde{v}_{n}-\frac{\alpha}{2} \tilde{z}_{n} \right) \nonumber \\
&\leq H_{u}(\e_{n}x+\e_{n}\tilde{y}_{n}, \tilde{u}_{n}, \tilde{v}_{n})+H_{v}(\e_{n}x+\e_{n}\tilde{y}_{n}, \tilde{u}_{n}, \tilde{v}_{n}) -\frac{\alpha}{2} \tilde{z}_{n} \nonumber \\
&\leq H_{u}(\e_{n}x+\e_{n}\tilde{y}_{n}, \tilde{u}_{n}, \tilde{v}_{n})+H_{v}(\e_{n}x+\e_{n}\tilde{y}_{n}, \tilde{u}_{n}, \tilde{v}_{n}) -\frac{\alpha}{2} |(\tilde{u}_{n}, \tilde{v}_{n})|  \nonumber \\
&\leq 0 \mbox{ in } \R^{N}\setminus B_{R_{1}}. 
\end{align}
On the other hand, by Lemma $4.3$ in \cite{FQT}, we know that there exists a function $w$ such that 
\begin{align}\label{HZ1}
0<w(x)\leq \frac{C}{1+|x|^{N+2s}},
\end{align}
and
\begin{align}\label{HZ2}
(-\Delta)^{s} w+\frac{\alpha}{2}w\geq 0 \mbox{ in } \R^{N}\setminus B_{R_{2}} 
\end{align}
for some suitable $R_{2}>0$.
Take $R_{3}=\max\{R_{1}, R_{2}\}$ and we set 
\begin{align}\label{HZ4}
c=\inf_{B_{R_{3}}} w>0 \mbox{ and } \tilde{w}_{n}=(b+1)w-c\tilde{z}_{n},
\end{align}
where $b=\sup_{n\in \mathbb{N}} \|\tilde{z}_{n}\|_{L^{\infty}(\R^{N})}<\infty$. 
In what follows we prove that 
\begin{equation}\label{HZ5}
\tilde{w}_{n}\geq 0 \mbox{ in } \R^{N}.
\end{equation}
First of all, we can note that
\begin{align}
&\tilde{w}_{n}\geq bc+w-bc>0 \mbox{ in } B_{R_{3}} \label{HZ0},\\
&(-\Delta)^{s} \tilde{w}_{n}+\frac{\alpha}{2}\tilde{w}_{n}\geq 0 \mbox{ in } \R^{N}\setminus B_{R_{3}} \label{HZ00}.
\end{align}
Now we argue by contradiction and assume that there exists a sequence $\{\bar{x}_{j, n}\}\subset \R^{N}$ such that 
\begin{align}\label{HZ6}
\inf_{x\in \R^{N}} \tilde{w}_{n}(x)=\lim_{j\rightarrow \infty} \tilde{w}_{n}(\bar{x}_{j, n})<0. 
\end{align}
Using (\ref{AM3}), $\eqref{HZ1}$ and the definition of $\tilde{w}_{n}$, we know that $\tilde{w}_{n}(x)\rightarrow 0$ as $|x|\rightarrow \infty$, uniformly in $n\in \mathbb{N}$. Therefore $\{\bar{x}_{j, n}\}$ is bounded, and, up to subsequence, we may assume that there exists $\bar{x}_{n}\in \R^{N}$ such that $\bar{x}_{j, n}\rightarrow \bar{x}_{n}$ as $j\rightarrow \infty$. 
Thanks to (\ref{HZ6}) we can see that
\begin{align}\label{HZ7}
\inf_{x\in \R^{N}} \tilde{w}_{n}(x)= \tilde{w}_{n}(\bar{x}_{n})<0.
\end{align}
From the minimality of $\bar{x}_{n}$ and the integral representation of the fractional Laplacian \cite{DPV}, we can see that 
\begin{align}\label{HZ8}
(-\Delta)^{s}\tilde{w}_{n}(\bar{x}_{n})=\frac{C_{N, s}}{2} \int_{\R^{N}} \frac{2\tilde{w}_{n}(\bar{x}_{n})-\tilde{w}_{n}(\bar{x}_{n}+\xi)-\tilde{w}_{n}(\bar{x}_{n}-\xi)}{|\xi|^{N+2s}} d\xi\leq 0.
\end{align}
In view of (\ref{HZ0}) and (\ref{HZ6}), we can infer that $\bar{x}_{n}\in \R^{N}\setminus B_{R_{3}}$.
This fact combined with (\ref{HZ7}) and (\ref{HZ8}) yields 
$$
(-\Delta)^{s} \tilde{w}_{n}(\bar{x}_{n})+\frac{\alpha}{2}\tilde{w}_{n}(\bar{x}_{n})<0,
$$
which gives a contradiction due to (\ref{HZ00}).
Accordingly, (\ref{HZ5}) holds true, and using (\ref{HZ1}) we have
\begin{align*}
\tilde{z}_{n}(x)\leq \frac{\tilde{C}}{1+|x|^{N+2s}} \mbox{ for all } x\in \R^{N}, n\in \mathbb{N},
\end{align*}
for some constant $\tilde{C}>0$. 
Recalling the definition of $\tilde{z}_{n}$ we can deduce that  
\begin{align*}
\hat{u}_{n}(x)&=u_{\e_{n}}\left(\frac{x}{\e_{n}}\right)=\tilde{u}_{n}\left(\frac{x}{\e_{n}}-\tilde{y}_{n}\right) \\
&\leq \frac{\tilde{C}}{1+|\frac{x}{\e_{n}}-\tilde{y}_{n}|^{N+2s}} \\
&=\frac{\tilde{C} \e_{n}^{N+2s}}{\e_{n}^{N+2s}+|x- \e_{n} \tilde{y}_{n}|^{N+2s}} \\
&\leq \frac{\tilde{C} \e_{n}^{N+2s}}{\e_{n}^{N+2s}+|x-P_{n}|^{N+2s}} \quad \forall x\in \R^{N}.
\end{align*}
In a similar manner we can obtain the estimate for $\hat{v}_{n}$. This ends the proof of Theorem \ref{thm1}. 
\end{proof}

\end{document}